\newcommand{\subgrad}{\mathsf{g}}
\newcommand{\gradmap}{\mathsf{G}}
\newcommand{\meangradmap}{\wb{\gradmap}}
\newcommand{\toto}{\rightrightarrows}
\newcommand{\opt}{^\star}
\newcommand{\stepsize}{\alpha}
\newcommand{\ball}{\mathbb{B}}
\newcommand{\normalcone}{\mc{N}}
\newcommand{\statval}{s}
\newcommand{\statrv}{S}
\newcommand{\statdomain}{\mc{S}}
\newcommand{\xdomain}{X}
\newcommand{\noise}{\xi}
\newcommand{\cont}{\mathcal{C}}
\newcommand{\error}{\xi}
\newcommand{\stationary}{\xdomain\opt}
\newcommand{\continuous}{\mathcal{C}}
\newcommand{\regularizer}{\varphi}
\newcommand{\klow}{k_{<}}
\newcommand{\khigh}{k_{>}}
\newcommand{\bigindic}{\mathbb{I}}
\newcommand{\project}{\pi}
\providecommand{\opnorm}{\norm}
\newcommand{\lipc}{\beta}
\newcommand{\liph}{\gamma}
\newcommand{\lipf}{M}
\begin{document}

\title{Stochastic Methods for Composite and \\
  Weakly Convex Optimization Problems\thanks{
    Both authors were partially supported by NSF award CCF-1553086 
    and an Alfred P. Sloan fellowship.}}
\author{John C.\ Duchi\thanks{Department of Electrical Engineering,
    Department of Statistics, 
    Stanford University, CA 94305 (\email{jduchi@stanford.edu}).}
  \and Feng Ruan\thanks{Department of Statistics, Stanford University, 
    CA 94305 (\email{fengruan@stanford.edu})}. Supported 
  by an E.K.\ Potter Stanford Graduate Fellowship.}

\newcommand{\TheTitle}{Stochastic Methods for Composite Optimization Problems} 
\newcommand{\TheAuthors}{John C.\ Duchi and Feng Ruan}

\headers{\TheTitle}{\TheAuthors}

\maketitle


\begin{abstract}
  We consider minimization of stochastic functionals
  that are compositions of a (potentially) non-smooth convex
  function $h$ and smooth function $c$ and,
  more generally, stochastic weakly-convex functionals. We develop
  a family of stochastic methods---including
  a stochastic prox-linear algorithm
  and a stochastic (generalized) sub-gradient procedure---and
  prove that, under mild technical conditions, each converges
  to first-order stationary points of the stochastic objective.
  We provide experiments further investigating our methods
  on non-smooth phase retrieval problems; the experiments indicate
  the practical effectiveness of the procedures.
\end{abstract}


\section{Introduction}

Let $f : \R^d \to \R$ be the stochastic composite function
\begin{equation}
  \label{eqn:convex-composite-stochastic}
  f(x) \defeq \E_P[h(c(x; \statrv); \statrv)]
  = \int_\statdomain h(c(x; \statval); \statval) dP(\statval),
\end{equation}
where $P$ is a probability distribution on a sample space $\statdomain$ and
for each $\statval \in \statdomain$, the function $z \mapsto h(z; \statval)$
is closed convex and $x \mapsto c(x; \statval)$ is smooth.  In this paper,
we consider stochastic methods for minimization---or at least finding
stationary points---of such composite functionals.  The
objective~\eqref{eqn:convex-composite-stochastic} is an instance of the more
general problem of stochastic weakly convex optimization, where $f(x) \defeq
\E_P[f(x; \statrv)]$ and for each $x_0$ and $\statval \in \statdomain$,
there is $\lambda(\statval, x_0)$ such that $x \mapsto f(x; \statval) +
\frac{\lambda(\statval, x_0)}{2} \norm{x - x_0}^2$ is convex in a
neighborhood of $x_0$.  (We show later how
problem~\eqref{eqn:convex-composite-stochastic} falls in this framework.)
Such functions have classical and modern applications in
optimization~\cite{Ermoliev69, Drusvyatskiy18, PoliquinRo96,
  RockafellarWe98}, for example, in phase
retrieval~\cite{DuchiRu18a} problems or
training deep linear neural networks (e.g.~\cite{HardtMa17}).
We thus study the problem
\begin{equation}
  \label{eqn:problem}
  \begin{split}
    \minimize_x ~~
    & f(x) + \regularizer(x)
    = \E_P[f(x; \statrv)] + \regularizer(x) \\
    \subjectto ~~& x\in \xdomain, 
  \end{split}
\end{equation}
where $\xdomain \subset \R^d$ is a closed convex set and $\regularizer :
\R^d \to \R$ is a closed convex function.

Many problems are representable in the
form~\eqref{eqn:convex-composite-stochastic}. Taking the function $c$ as the
identity mapping, classical regularized stochastic convex optimization
problems fall into this framework~\cite{NemirovskiJuLaSh09}, including
regularized least-squares and the Lasso~\cite{HastieTiFr09, Tibshirani96b},
with $\statval = (a, b) \in \R^d \times \R$ and $h(x; \statval) = \half (a^T
x - b)^2$ and $\varphi$ typically some norm on $x$, or supervised learning
objectives such as logistic regression or support vector
machines~\cite{HastieTiFr09}.  The more general
settings~(\ref{eqn:convex-composite-stochastic}--\ref{eqn:problem}) include
a number of important non-convex problems. Examples include non-linear least
squares~\cite[cf.][]{NocedalWr06}, with $\statval = (a, b)$ and $b \in \R$,
the convex term $h(t; \statval) \equiv h(t) = \half t^2$ independent of the
sampled $\statval$, and $c(x; \statval) = c_0(x; a) - b$ where $c_0$ is some
smooth function a modeler believes predicts $b$ well given $x$ and data $a$.
Another compelling example is the (robust) phase retrieval
problem~\cite{CandesStVo13,SchechtmanElCoChMiSe15}---which we explore in
more depth in our numerical experiments---where the data $\statval = (a, b)
\in \R^d \times \R_+$, $h(t; \statval) \equiv h(t) = |t|$ or $h(t; \statval)
\equiv h(t) = \half t^2$, and $c(x; \statval) = (a^T x)^2 - b$. In the case
that $h(t) = |t|$, the form~\eqref{eqn:convex-composite-stochastic} is an
exact penalty for the solution of a collection of quadratic equalities
$(a_i^T x)^2 = b_i$, $i = 1, \ldots, N$, where we take $P$ to be point
masses on pairs $(a_i, b_i)$.

Fletcher and Watson~\cite{FletcherWa80, Fletcher82} initiated work on
composite problems, to
\begin{equation}
  \label{eqn:convex-composite-deterministic}
  \minimize_x ~~ h(c(x)) + \regularizer(x), ~
  \subjectto ~x\in \xdomain
\end{equation}
for fixed convex $h$, smooth $c$, convex $\regularizer$ and convex
$\xdomain$. A motivation of this earlier work is nonlinear programming
problems with the constraint that $x \in \{x : c(x) = 0\}$, in
which case taking $h(z) = \norm{z}$ functions
as an exact penalty~\cite{HiriartUrrutyLe93ab} for the constraint
$c(x) = 0$.  A more recent line of work, beginning with \citet{Burke85} and
continued by (among others) Druvyatskiy, Ioffe, Lewis, Pacquette, and
Wright~\cite{LewisWr08, DrusvyatskiyIoLe16, DrusvyatskiyLe18,
  DrusvyatskiyPa16}, establishes convergence
rate guarantees for methods that sequentially minimize
convex surrogates for
problem~\eqref{eqn:convex-composite-deterministic}.

Roughly, these papers construct a model of the composite function $f(x) =
h(c(x))$ as follows. Letting $\nabla c(x)$ be the transpose of the Jacobian
of $c$ at $x$, so $c(y) = c(x) + \nabla c(x)^T (y - x) +
o(\norm{y - x})$, one defines the ``linearized'' model of $f$ at $x$ by
\begin{equation}
  \label{eqn:linear-f}
  f_x(y) \defeq h(c(x) + \nabla c(x)^T (y - x)),
\end{equation}
which is convex in $y$. When $h$ and $\nabla c$ are Lipschitzian,
then $|f_x(y) - f(x)| = O(\norm{x - y}^2)$, so that the
model~\eqref{eqn:linear-f} is second-order accurate, which motivates the
following \emph{prox-linear} method. Beginning from some $x_0 \in \xdomain$,
iteratively construct
\begin{equation}
  \label{eqn:prox-linear-step-deterministic}
  x_{k + 1} = \argmin_{x \in \xdomain}
  \left\{ f_{x_k}(x) + \regularizer(x) + \frac{1}{2 \stepsize_k}
  \norm{x - x_k}^2 \right\},
\end{equation}
where $\stepsize_k > 0$ is a stepsize that may be chosen by a
line-search. For small $\stepsize_k$, the
iterates~\eqref{eqn:prox-linear-step-deterministic} guarantee decreasing
$h(c(x_k)) + \regularizer(x_k)$, the sequence of
problems~\eqref{eqn:prox-linear-step-deterministic} are convex, and
moreover, the iterates $x_k$ converge to stationary points of
problem~\eqref{eqn:convex-composite-deterministic}
\cite[\S~5]{DrusvyatskiyLe18}. The prox-linear method is effective so long
as minimizing the models $f_{x_k}(x)$ is reasonably computationally easy.
More generally, minimizing a sequence of models $f_{x_k}$ of $f$ centered
around the iterate $x_k$ is natural, with examples including Rockafellar's
proximal point algorithm~\cite{Rockafellar76} and general sequential convex
programming approaches, such as trust region and other Taylor-like
methods~\cite{NocedalWr06,ConnGoTo00,DrusvyatskiyLe18,Drusvyatskiy18}.

In our problem~\eqref{eqn:problem}
where $f(x) = \E[f(x; \statrv)]$ for $f(\cdot, \statval)$ weakly convex
or composite, the
iterates~\eqref{eqn:prox-linear-step-deterministic} may be computationally
challenging. Even in the case in which $P$ is discrete so that
problem~\eqref{eqn:convex-composite-stochastic} has the form $f(x) =
\frac{1}{n} \sum_{i = 1}^n h_i(c_i(x))$, which is evidently of the
form~\eqref{eqn:convex-composite-deterministic}, the iterations generating
$x_k$ may be prohibitively expensive for large $n$. When $P$ is continuous 
or is unknown, because we can only simulate draws $\statrv \sim P$
or in statistical settings where the only access to $P$ is via
observations $\statrv_i \sim P$, then the
iteration~\eqref{eqn:prox-linear-step-deterministic} is essentially
infeasible. Given the wide applicability of the stochastic composite
problem~\eqref{eqn:convex-composite-stochastic}, it is of
substantial interest to develop efficient online and stochastic methods to
(approximately) solve it, or at least to find local optima.


In this work, we develop and study stochastic model-based algorithms,
examples of which include a stochastic linear proximal algorithm, which is a
stochastic analogue of problem~\eqref{eqn:prox-linear-step-deterministic},
and a stochastic subgradient algorithm, both of whose definitions we give in
Section~\ref{sec:algorithm}. The iterations of such methods are often
computationally simple, and they require only individual samples $\statrv
\sim P$ at each iteration.  Consider for concreteness the case when $P$ is
discrete and supported on $i = 1, \ldots, n$ (i.e.\ $f(x) = \frac{1}{n}
\sum_{i=1}^n h_i(c_i(x))$). Then instead of solving the non-trivial
subproblem~\eqref{eqn:prox-linear-step-deterministic}, the stochastic
prox-linear algorithm samples $i_0 \in [n]$ uniformly, then substitutes
$h_{i_0}$ and $c_{i_0}$ for $h$ and $c$ in the iteration. Thus, as long as
there is a prox-linear step for the individual compositions $h_i \circ c_i$,
the algorithm is easy to implement and execute.


The main result of this paper is that the stochastic model-based methods we
develop for the stochastic composite and weakly-convex optimization problems
are convergent. More precisely, assuming that (i) with probability one, the
iterates of the procedures are bounded, (ii) the objective function $F +
\bigindic_X$ is coercive and (iii) second moment conditions on
local-Lipschitzian and local-convexity parameters of the random functions
$f(\cdot, \statval)$, any appropriate model-based stochastic minimization
strategy has limit points taking values $f(x)$ in the set of stationary
values of the function. If the image of non-critical points of the objective
function is dense in $\R$, the methods converge to stationary points of the
(potentially) non-smooth, non-convex objective~\eqref{eqn:problem}
(Theorem~\ref{theorem:informal-theorem-stationary-cluster-points} in
Sec.~\ref{sec:algorithm} and
Theorem~\ref{theorem:stationary-cluster-points-extension} in
Sec.~\ref{sec:as-convergence}). As gradients $\nabla f(x)$ may not exist
(and may not even be zero at stationary points because of the non-smoothness
of the objective), demonstrating this convergence provides some challenge.
To circumvent these difficulties, we show that the iterates are
asymptotically equivalent to the trajectories of a particular ordinary
differential inclusion~\cite{AubinCe84} (a non-smooth generalization of
ordinary differential equations (ODEs)) related to
problem~\eqref{eqn:convex-composite-stochastic}, building off of the
classical ODE method~\cite{Ljung77,KushnerYi03,Borkar08} (see
Section~\ref{sec:functional-convergence}). By developing a number of
analytic properties of the limiting differential inclusion using the weak
convexity of $f$, we show that trajectories of the ODE must converge
(Section~\ref{sec:diff-inclusion-properties}). A careful stability analysis
then shows that limit properties of trajectories of the ODE are preserved
under small perturbations, and viewing our algorithms as noisy discrete
approximations to a solution of the ordinary differential inclusion gives
our desired convergence (Section~\ref{sec:as-convergence}).

Our results do not provide rates of convergence for the stochastic
procedures, so to investigate the properties of the methods we propose, we
perform a number of numerical simulations in Section~\ref{sec:experiments}.
We focus on a discrete version of
problem~\eqref{eqn:convex-composite-stochastic} with the robust phase
retrieval objective $f(x; a, b) = |(a^T x)^2 - b|$, which facilitates
comparison with deterministic
methods~\eqref{eqn:prox-linear-step-deterministic}. Our experiments
extend our theoretical predictions, showing the advantages of
stochastic over deterministic procedures for some
problems, and they also show that the stochastic prox-linear method may be
preferable to stochastic subgradient methods because of robustness
properties it enjoys (which our simulations verify, though our theory does
not yet explain).

\paragraph{Related and subsequent work}
The stochastic subgradient method has a substantial history.  Early work
due to Ermoliev and Norkin~\cite{Ermoliev69,
  ErmolievNo98, ErmolievNo03}, \citet{Gupal79}, and \citet{Dorofeyev85}
identifies the basic assumptions sufficient for stochastic gradient methods
to be convergent. \citet{Ruszczynski87} provides a convergent gradient
averaging-based optimization scheme for stochastic weakly convex
problems. Our analytical approach is based on differential equations and
inclusions, which have a long history in the study of stochastic
optimization
methods, where researchers have used a limiting differential equation
or inclusion to exhibit convergence of stochastic approximation
schemes~\cite{Ljung77,AubinCe84,Kunze00,Borkar08}; more recent work
uses differential equations to model accelerated gradient
methods~\cite{SuBoCa14, WibisonoWiJo16}.  Our approach gives similar
convergence results to those for stochastic subgradient methods, but allows
us to study and prove convergence for a more general collection of
model-based minimization strategies.  Our results do not provide convergence
rates, which is possible when the compositional
structure~\eqref{eqn:convex-composite-stochastic} leaves the problem
\emph{convex}~\cite{WangLiFa16, WangFaLi17}; the problems we consider are
typically non-smooth and non-convex, so that these approaches do not apply.

Subsequent to the initial appearance of the current paper on the
\texttt{arXiv} and inspired by our work,\footnote{based on personal
  communication with Damek Davis and Dmitriy Drusvyatskiy} Davis,
Drusvyatskiy, and Grimmer have provided convergence rates for variants of
stochastic subgradient, prox-linear, and related methods~\cite{DavisGr17,
  DavisDr18, DavisDr18a}.  Here they show that the methods we develop in
this paper satisfy non-asymptotic convergence guarantees.  To make this
precise, let $F_\lambda(x) = \inf_{y \in X} \{f(y) + \varphi(y) +
\frac{\lambda}{2} \norm{y - x}^2\}$ be the Moreau envelope of the
objective~\eqref{eqn:problem}, which is continuously differentiable and for
which $\nabla F_\lambda$ being small is a proxy for near-stationarity of $x$
(see~\cite{Drusvyatskiy18, DavisDr18, DavisDr18a}). Then they show that,
with appropriate stepsizes, they can construct a (random) iterate
$\what{x}_k$ such that $\E[\norm{\nabla F_\lambda(\what{x}_k)}^2] = O(1 /
\sqrt{k})$. These convergence guarantees extend the
with probability 1 convergence results we provide.

\paragraph{Notation and basic definitions}
We collect here our (mostly standard) notation and basic definitions that we
require. For $x, y\in \R$, we let $x \wedge y = \min\{x, y\}$.  We let
$\ball$ denote the unit $\ell_2$-ball in $\R^d$, where $d$ is
apparent from context, and $\norm{\cdot}$ denotes the operator $\ell_2$-norm
(the standard Euclidean norm on vectors).  For a set $A \subset
\R^d$ we let $\norm{A} = \sup_{a \in A} \norm{a}$.  We say $f : \R^d \to \R
\cup \{+\infty\}$ is \emph{$\lambda$-weakly convex} (also known as
lower-$\mc{C}^2$ or semiconvex~\cite{RockafellarWe98, BolteDaLeMa10}) near
$x$ if there exists $\epsilon > 0$ such that for all $x_0 \in \R^d$,
\begin{equation}
  \label{eqn:weak-convexity}
  y \mapsto f(y) + \frac{\lambda}{2} \ltwo{y - x_0}^2,
  ~~ y \in x + \epsilon \ball
\end{equation}
is convex (the vector $x_0$ is immaterial in~\eqref{eqn:weak-convexity}, as
holding at one $x_0$ is equivalent)~\cite[Ch.~10.G]{RockafellarWe98}.  For a
function $f : \R^d \to \R \cup \{+\infty\}$, we let $\partial f(x)$ denote
the Fr\'{e}chet (or regular~\cite[Ch.~8.B]{RockafellarWe98}) subdifferential
of $f$ at the point $x$,
\begin{equation*}
  \partial f(x) \defeq \left\{g \in \R^d :
  f(y) \ge f(x) + \<g, y - x\> + o(\norm{y - x})
  ~ \mbox{as~} y \to x \right\}.
\end{equation*}
The Fr\'{e}chet subdifferential and subdifferential coincide for
convex $f$~\cite[Ch.~8]{RockafellarWe98},
and for weakly convex $f$, $\partial f(x)$ is non-empty
for $x$ in the relative interior of $\dom f$.
The Clarke
directional derivative of a function $f$ at the point $x$ in direction $v$
is
\begin{equation*}
  f'(x; v) \defeq \liminf_{t \downarrow 0, v' \to v} \frac{f(x + tv) - f(x)}{t},
\end{equation*}
and recall~\cite[Ex.~8.4]{RockafellarWe98} that
$\partial f(x) = \{w \in \R^d : \<v, w\> \le f'(x; v) ~ \mbox{for~all~} v\}$.

\newcommand{\fndist}{\mathsf{d}}

We let $\continuous(A, B)$ denote the continuous functions from
$A$ to $B$. Given a sequence of functions $f_n : \R_+ \to \R^d$, we say that
$f_n \to f$ in $\continuous(\R_+, \R^d)$ if $f_n \to f$ uniformly on
all compact sets, that is, for all $T < \infty$ we have
\begin{equation*}
  \lim_{n \to \infty} \sup_{t \in [0, T]} \norm{f_n(t) - f(t)} = 0.
\end{equation*}
This is equivalent to convergence in
$\fndist(f, g) \defeq \sum_{t = 1}^\infty 2^{-t} \sup_{\tau \in [0, t]}
\norm{f(\tau) - g(\tau)} \wedge 1$,
which shows the standard result that $\continuous(\R_+, \R^d)$ is a
Fr\'{e}chet space.
For a closed convex set $X$, we let $\bigindic_X$ denote the $+\infty$-valued
indicator for $X$, that is, $\bigindic_X(x) = 0$ if $x \in X$ and $+\infty$
otherwise.
The normal cone to $X$ at $x$ is
\begin{equation*}
  \normalcone_X(x) \defeq \{v \in \R^d : \<v, y - x\> \le 0 ~ \mbox{for~all~}
  y \in X \}.
\end{equation*}
For closed convex $C$, $\project_C(x) \defeq \argmin_{y \in C} \norm{y -
  x}$ denotes projection of $x$ onto $C$.


\section{Algorithms and Main Convergence Result}
\label{sec:algorithm}

In this section, we introduce the family of algorithms we study
for problem~\eqref{eqn:problem}. In analogy with
the update~\eqref{eqn:prox-linear-step-deterministic},
we first give a general form
of our model-based approach, then exhibit three examples
that fall into the broad scheme.
We iterate
\begin{equation}
  \begin{split}
    & \text{Draw}~~ \statrv_k \simiid P \\
    & x_{k + 1} \defeq \argmin_{y \in \xdomain}
    \left\{f_{x_k}(y; \statrv_k) + \regularizer(y)
    + \frac{1}{2 \stepsize_k} \norm{y - x_k}^2 \right\}.
  \end{split}
  \label{eqn:model-based-minimization}
\end{equation}
In the iteration~\eqref{eqn:model-based-minimization}, the function
$f_{x_k}(\cdot; \statval)$ is an approximation, or model, of $f(\cdot;
\statval)$ at the point $x_k$,
and $\stepsize_k > 0$ is a stepsize sequence.


For the model-based strategy~\eqref{eqn:model-based-minimization} to be
effective, we require that $f_x(\cdot; \statval)$ satisfy a few essential
properties on its approximation quality.
\begin{enumerate}[label=C.(\roman*),leftmargin=*]
\item \label{item:convex-model}
  The function $y \mapsto f_x(y; \statval)$ is convex
  and subdifferentiable on its domain
\item \label{item:equal-model}
  We have $f_x(x; \statval) = f(x; \statval)$
\item \label{item:subgrad-model}
  At $y = x$ we have the containment
  \begin{equation*}
    \left.\partial_y f_x(y; \statval)\right|_{y = x}
    \subset \partial_x f(x; \statval).
  \end{equation*}
\end{enumerate}
\noindent
In addition to
conditions~\ref{item:convex-model}--\ref{item:subgrad-model},
we require one additional technical condition on the models,
which quantitatively guarantees they locally
almost underestimate $f$.
\begin{enumerate}[label=C.(\roman*),leftmargin=*]
  \setcounter{enumi}{3}
\item \label{item:upper-approximation}
  There exists $\epsilon_0 > 0$ such that $0 < \epsilon \le \epsilon_0$
  implies that for all
  $x_0 \in \xdomain$ there exists $\delta_\epsilon(x_0; \statval) \ge 0$
  with
  \begin{equation*}
    f(y; \statval) \ge f_x(y; \statval) - \half
    \delta_\epsilon(x_0;\statval) \norm{y - x}^2
  \end{equation*}
  for $x, y \in x_0 + \epsilon \ball$, where $\E[\delta_\epsilon(x_0;
    \statrv)] < \infty$.
\end{enumerate}

\subsection{Examples}
\label{sec:example}
We give four example algorithms for
problems~\eqref{eqn:convex-composite-stochastic} and~\eqref{eqn:problem},
each of which consists of a local model $f_x$ satisfying
conditions~\ref{item:convex-model}--\ref{item:upper-approximation}.  The
conditions~\ref{item:convex-model}--\ref{item:subgrad-model} are immediate,
while we defer verification of condition~\ref{item:upper-approximation} to
after the statement of
Theorem~\ref{theorem:informal-theorem-stationary-cluster-points}. The first
example is the natural generalization of the classical subgradient
method~\cite{Ermoliev69}.

\begin{example}[Stochastic subgradient method]
  \label{example:sgd}
  For this method,
  we let $\subgrad(x; \statval) \in \partial f(x; \statval)$ be a
  (fixed)
  element of the Fr\'{e}chet subdifferential of $f(x; \statval)$;
  in the case of the composite objective~\eqref{eqn:convex-composite-stochastic}
  this is $\subgrad(x; \statval) \in \nabla c(x; \statval)
  \partial h(c(x;\statval);\statval)$.
  Then the model~\eqref{eqn:model-based-minimization} for
  the stochastic (regularized and projected) subgradient
  method is
  \begin{equation*}
    f_{x}(y; \statval) \defeq f(x; \statval)
    + \<\subgrad(x; \statval), y - x\>.
  \end{equation*}
  The properties~\ref{item:convex-model}--\ref{item:subgrad-model}
  are immediate.
\end{example}

The stochastic prox-linear method applies to the structured family of
convex composite problems~\eqref{eqn:convex-composite-stochastic},
generalizing the deterministic prox-linear method~\cite{Burke85,
  DrusvyatskiyLe18, DrusvyatskiyPa16}.

\begin{example}[Stochastic prox-linear method]
  \label{example:stochastic-prox-linear}
  Here, we have $f(x;\statval) = h(c(x; \statval); \statval)$,
  and in analogy to the update~\eqref{eqn:linear-f}
  we linearize $c$ without modifying $h$,
  defining
  \begin{equation*}
    f_x(y; \statval) \defeq h(c(x; \statval)
    + \nabla c(x; \statval)^T(y - x); \statval)
  \end{equation*}
  Again, conditions~\ref{item:convex-model}--\ref{item:subgrad-model}
  are immediate.
\end{example}

Lastly, we have stochastic proximal point methods for
weakly-convex functions.

\begin{example}[Stochastic proximal-point method]
  \label{example:stochastic-prox-point}
  We assume that the instantaneous function $f(\cdot; \statval)$ is
  $\lambda(\statval)$-weakly convex over $\xdomain$. In this case, for the
  model in the update~\eqref{eqn:model-based-minimization}, we set
  $f_x(y; \statval) = f(y; \statval) + \frac{\lambda(\statval)}{2}
  \norm{y - x}^2$.
\end{example}

\begin{example}[Guarded stochastic proximal-point method]
  \label{example:guarded-stochastic-prox-point}
  We assume that for some $\epsilon > 0$
  and all $x \in \xdomain$,
  the instantaneous function $f(\cdot; \statval)$ is
  $\lambda(\statval, x)$-weakly convex over $\xdomain
  \cap \{x + \epsilon \ball\}$. In this case, for the
  model in the update~\eqref{eqn:model-based-minimization}, we set
  \begin{equation}
    f_x(y; \statval) = f(y; \statval) + \frac{\lambda(\statval, x)}{2}
    \norm{y - x}^2 + \bigindic_{x + \epsilon \ball}(y),
    \label{eqn:guarded-stochastic-prox-point}
  \end{equation}
  which restricts the domain of the model function $f_x(\cdot; \statval)$ to
  a neighborhood of $x$ so that the
  update~\eqref{eqn:model-based-minimization} does not escape the region of
  convexity.  Again, by inspection, this satisfies
  conditions~\ref{item:convex-model}--\ref{item:subgrad-model}.
\end{example}

\subsection{The main convergence result}

The main theoretical result of this paper is to show that stochastic
algorithms based on the update~\eqref{eqn:model-based-minimization} converge
almost surely to the stationary points of the objective function $F(x) =
f(x) + \regularizer(x)$ over $\xdomain$. To state our results formally,
for $\epsilon > 0$ we
we define the function $\lipf_\epsilon : \xdomain \times \statdomain \to \R_+$
by
\begin{equation*}
  \lipf_\epsilon(x; \statval)
  \defeq \sup_{y \in \xdomain, \norm{y - x} \le \epsilon}
  \sup_{g \in \partial f(y; \statval)} \norm{g}.
\end{equation*}
We then make the following local Lipschitzian and convexity
assumptions on 
$f(\cdot; \statval)$.
\begin{assumption}
  \label{assumption:local-lipschitz}
  There exists $\epsilon_0 > 0$ such that
  $0 < \epsilon \le \epsilon_0$ implies that
  \begin{equation*}
    \E[\lipf_\epsilon(x; \statrv)^2] < \infty
    ~~ \mbox{for~all~} x \in \xdomain.
  \end{equation*}
\end{assumption}
\begin{assumption}
  \label{assumption:weak-convexity}
  There exists $\epsilon_0 > 0$ such that $0 < \epsilon \le \epsilon_0$
  implies that for all $x \in \xdomain$, there exists
  $\lambda(\statval, x) \ge 0$ such that
  \begin{equation*}
    y \mapsto f(y; \statval) + \frac{\lambda(\statval, x)}{2}
    \norm{y - x_0}^2
  \end{equation*}
  is convex on the set $x + \epsilon\ball$ for any $x_0$, and
  $\E[\lambda(\statrv, x)] < \infty$.
\end{assumption}
\noindent
As we shall see in Lemma~\ref{lemma:frechet-subgradients-of-f} later,
Assumptions~\ref{assumption:local-lipschitz}
and~\ref{assumption:weak-convexity} are sufficient to guarantee that
$\partial f(x)$ exists, is non-empty for all $x \in \xdomain$, and is outer
semi-continuous. In addition, it is immediate that for any
$\lambda \ge \E[\lambda(\statrv, x)]$, the function $f$ is $\lambda$-weakly
convex~\eqref{eqn:weak-convexity} on the $\epsilon$-ball around $x$.

With the assumptions in place, we can
now proceed to a (mildly) simplified version of our main result in this
paper. Let $\stationary$ denote the set of stationary points for the
objective function $F(x) = f(x) + \regularizer(x)$ over $\xdomain$.
Lemma~\ref{lemma:frechet-subgradients-of-f} to come implies that
$\partial F(x) = \partial f(x) + \partial \regularizer(x)$ for all
$x\in \xdomain$, so we can represent $\stationary$ as
\begin{equation}
  \label{eqn:stationary}
  \stationary
  \defeq
  \left\{x \in \xdomain \mid
  \exists g \in \partial f(x) + \partial \regularizer(x)
  ~ \mbox{with}~
  \<g, y - x\> \ge 0 ~ \mbox{for~all~} y \in \xdomain \right\}.
\end{equation}
Equivalently, $\partial f(x) + \partial \regularizer(x) \cap
-\normalcone_\xdomain(x) \neq \emptyset$, or $0 \in \partial f(x) + \partial
\regularizer(x) + \normalcone_\xdomain(x)$.
Important for us is the \emph{image} of the set of stationary
points, that is,
\begin{equation*}
  F(\stationary) \defeq \left\{f(x) + \regularizer(x)
  \mid x \in \stationary\right\}.
\end{equation*}

With these definitions, we have the following convergence result, which is a
simplification of our main convergence result,
Theorem~\ref{theorem:stationary-cluster-points-extension}, which we present in
Section~\ref{sec:as-convergence}.
\begin{theorem}
  \label{theorem:informal-theorem-stationary-cluster-points}
  Let Assumptions~\ref{assumption:local-lipschitz}
  and~\ref{assumption:weak-convexity} hold and assume
  $\xdomain$ is compact. Let $x_k$ be generated by any model-based update
  satisfying conditions
  \ref{item:convex-model}--\ref{item:upper-approximation} with
  stepsizes $\stepsize_k > 0$ satisfying
  $\sum_k \stepsize_k = \infty$ and $\sum_k \stepsize_k^2 < \infty$.
  Then with
  probability 1,
  \begin{equation}
    \label{eqn:subset-containment-limits}
    \big[\liminf_k F(x_k), \limsup_k F(x_k)\big]
    \subset F(\stationary).
  \end{equation}
\end{theorem}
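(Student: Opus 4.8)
The plan is to follow the classical ODE method, viewing the stochastic iterates $x_k$ as a noisy Euler discretization of a differential inclusion, and to show that limit points of the discretization must live in the stationary set $\stationary$. First I would set up the limiting dynamics: define the set-valued map $x \mapsto -\big[\partial f(x) + \partial \regularizer(x) + \normalcone_\xdomain(x)\big]$ and study the differential inclusion $\dot{X}(t) \in -\big[\partial f(X(t)) + \partial \regularizer(X(t)) + \normalcone_\xdomain(X(t))\big]$. Using Lemma~\ref{lemma:frechet-subgradients-of-f} (which Assumptions~\ref{assumption:local-lipschitz} and~\ref{assumption:weak-convexity} are said to guarantee), this map is nonempty-valued, convex-compact-valued, and outer semi-continuous, so solutions exist and the usual stability theory applies. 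The weak convexity of $f$ should give the key Lyapunov structure: along any solution, $F(X(t))$ is nonincreasing, and in fact $F \circ X$ is strictly decreasing except at points of $\stationary$, so the $\omega$-limit set of any trajectory is contained in $\stationary$ and $F$ is constant on it.

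Next I would pass from the continuous trajectories back to the iterates. Form the interpolated process $\bar{X}(t)$ with $\bar{X}(\sum_{j<k}\stepsize_j) = x_k$ and linear (or piecewise-constant model) interpolation in between. The update~\eqref{eqn:model-based-minimization} can be rewritten as $x_{k+1} = x_k - \stepsize_k (g_k + e_k)$ where $g_k \in \partial f(x_k;\statrv_k) + \partial\regularizer(x_{k+1}) + \normalcone_\xdomain(x_{k+1})$ by the optimality conditions of the proximal subproblem, condition~\ref{item:subgrad-model} ensures the model subgradient at the base point is a genuine subgradient of $f(\cdot;\statrv_k)$, and $e_k$ collects the error from evaluating subgradients at $x_{k+1}$ rather than $x_k$ and the discrepancy between the model and the true function. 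Splitting $g_k = \E[\,\cdot\mid\mathcal{F}_k] + (\text{martingale increment})$, the step condition $\sum\stepsize_k^2<\infty$ together with Assumption~\ref{assumption:local-lipschitz} (bounding second moments of subgradient norms uniformly on the compact set $\xdomain$) makes $\sum\stepsize_k(\text{martingale increment})$ converge a.s.; condition~\ref{item:upper-approximation} controls $e_k$ by $O(\stepsize_k\norm{x_{k+1}-x_k}) = O(\stepsize_k^2)$, which is summable. Standard arguments (à la Borkar or Benaïm) then show that every limit of time-shifts of $\bar{X}$ is a solution of the differential inclusion, so the limit set of $(x_k)$ is internally chain transitive for the flow.

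Finally I would combine the two halves. Because the limit set $L$ of $(x_k)$ is a connected, internally chain transitive invariant set for a flow admitting the strict Lyapunov function $F$ (strict off $\stationary$), the standard Lyapunov-function criterion forces $L \subset \stationary$ and $F$ constant on $L$; hence $[\liminf_k F(x_k), \limsup_k F(x_k)]$ collapses to a single point in $F(\stationary)$, which is even stronger than~\eqref{eqn:subset-containment-limits}. (In the full theorem one presumably does not assert $F(x_k)$ converges, only the interval containment; I would therefore be slightly more careful and only claim that every subsequential limit of $F(x_k)$ lies in $F(\stationary)$, using that $F$ is continuous on the compact $\xdomain$ and that $F\circ\bar X$ varies slowly, so any value in the limiting interval is attained as $F$ of a point in $L$.) I expect the main obstacle to be the non-smoothness: verifying that $F\circ X$ is a \emph{strict} Lyapunov function — i.e.\ that $\frac{d}{dt}F(X(t)) < 0$ whenever $X(t)\notin\stationary$ — requires a chain rule for the non-smooth, weakly convex $F$ along absolutely continuous curves, and care that the measurable selection $\dot X(t)$ pairs correctly against the subdifferential; this is where the weak convexity of $f$ (giving a quadratic lower bound and hence control of $F(X(t+h))-F(X(t))$) does the real work. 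A secondary subtlety is that $e_k$ involves subgradients at $x_{k+1}$, so one must first establish $\norm{x_{k+1}-x_k}\to 0$ (again from compactness of $\xdomain$ and $\stepsize_k\to 0$) before the perturbation bound closes.
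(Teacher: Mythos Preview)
Your framework---interpolate the iterates, show time-shifts subconverge to solutions of $\dot x\in-\partial f(x)-\partial\regularizer(x)-\normalcone_\xdomain(x)$, establish $F$ as a strict Lyapunov function for $\stationary$, then transfer back---is exactly the paper's. The paper carries out the first two pieces via Theorem~\ref{theorem:functional-convergence-general} and the key Lemma~\ref{lemma:existence-of-G}; the latter is where the issue you anticipate (subgradients of the model evaluated at $x_{k+1}$, not $x_k$) is actually handled, not by an $O(\stepsize_k^2)$ error bound but by outer semicontinuity together with Condition~\ref{item:upper-approximation}, showing the Ces\`aro means of the gradient mapping cluster in $G(x)+\normalcone_\xdomain(x)$. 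The Lyapunov step is Theorem~\ref{theorem:monotone-trajectory}, which gives $F(x(t))+\int_0^t\norm{\subgrad\opt(x(\tau))}^2\,d\tau\le F(x(0))$ along every solution.

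The gap is your final step. The Bena\"{\i}m-type criterion you invoke---that an internally chain transitive set $L$ with strict Lyapunov function $F$ for $\stationary$ must satisfy $L\subset\stationary$ with $F|_L$ constant---requires $F(\stationary)$ to have empty interior. That hypothesis is precisely Assumption~\ref{assumption:countable-regularity}, which this theorem does \emph{not} assume; it enters only in Corollary~\ref{corollary:informal}. Without it neither $L\subset\stationary$ nor convergence of $F(x_k)$ follows, so your ``even stronger'' conclusion is unavailable, and your fallback (subsequential limits of $F(x_k)$ lie in $F(\stationary)$) is strictly weaker than the interval containment actually asserted. The paper instead argues directly by an upcrossing construction (Section~\ref{sec:proof-stationary-cluster-points-extension}): Lemma~\ref{lemma:extreme-value-stationary-point} first shows that around any non-stationary cluster point the limiting trajectory strictly decreases $F$, which already places $\liminf_k F(x_k)$ and $\limsup_k F(x_k)$ in $F(\stationary)$; then, assuming some $y\in(\liminf,\limsup)\setminus F(\stationary)$, it tracks last-entrance/first-exit times of $F(x(\cdot))$ through a band just below $y$ and combines the strict decrease with a time-separation bound (Lemma~\ref{lemma:lower-bound-time-difference}) to reach a contradiction. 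In effect this is a hands-on proof that $F(L)\subset F(\stationary)$ for the limit set, bypassing the Sard-type hypothesis that the off-the-shelf ICT result would need.
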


We provide a few remarks on the theorem, as well as elucidating our
examples~\ref{example:sgd}--\ref{example:guarded-stochastic-prox-point} in
this context.  The limiting inclusion~\eqref{eqn:subset-containment-limits}
is familiar from the classical literature on stochastic subgradient
methods~\cite{Dorofeyev85,ErmolievNo98}, though in our case, it applies to
the broader family of model-based
updates~\eqref{eqn:model-based-minimization}, including
Examples~\ref{example:sgd}--\ref{example:guarded-stochastic-prox-point}.

To see that the theorem indeed applies to each of these
examples, we must verify Condition~\ref{item:upper-approximation}.
For Examples~\ref{example:sgd}, \ref{example:stochastic-prox-point},
and~\ref{example:guarded-stochastic-prox-point}, this is immediate
by taking the lower approximation function $\delta_\epsilon(x;
\statval) = \lambda(\statval, x)$ from Assumption~\ref{assumption:weak-convexity},
yielding the following
\begin{observation}
  Let Assumption~\ref{assumption:weak-convexity} hold. Then
  Condition~\ref{item:upper-approximation} holds for each of
  Examples~\ref{example:sgd}, \ref{example:stochastic-prox-point},
  and~\ref{example:guarded-stochastic-prox-point}.
\end{observation}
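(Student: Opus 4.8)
The plan is to exhibit, for each of the three examples, a choice of the lower-approximation function $\delta_\epsilon(x_0;\statval)$ built entirely from the weak-convexity modulus of Assumption~\ref{assumption:weak-convexity}, so that the integrability $\E[\delta_\epsilon(x_0;\statrv)] < \infty$ is immediate and all that remains is the pointwise inequality of Condition~\ref{item:upper-approximation}. The one structural fact I will lean on is that the weak-convexity modulus is monotone under shrinking the domain: if $y \mapsto f(y;\statval) + \frac{\mu}{2}\norm{y-c}^2$ is convex on a convex set $B$, then it is convex on every convex subset of $B$ and for every center $c$ (the center being immaterial, as noted after~\eqref{eqn:weak-convexity}). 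Applying Assumption~\ref{assumption:weak-convexity} at radius $2\epsilon$ around $x_0$ thus furnishes a single $\lambda(\statval,x_0) \ge 0$ with $\E[\lambda(\statrv,x_0)] < \infty$ for which $y \mapsto f(y;\statval) + \frac{\lambda(\statval,x_0)}{2}\norm{y-c}^2$ is convex on $x_0 + 2\epsilon\ball$ --- hence on $x + \epsilon\ball$ for every $x \in x_0 + \epsilon\ball$. I will take $\delta_\epsilon(x_0;\statval) \defeq \lambda(\statval,x_0)$ throughout (for Example~\ref{example:guarded-stochastic-prox-point} I additionally take $\epsilon_0$ below one-third the guard radius --- shrinking the guard radius first if needed, which leaves Conditions~\ref{item:convex-model}--\ref{item:subgrad-model} intact --- so that $x,y \in x_0 + \epsilon\ball$ forces $\norm{y-x} \le 2\epsilon$ to be smaller than the guard radius and the indicator in~\eqref{eqn:guarded-stochastic-prox-point} vanishes).

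For the subgradient model (Example~\ref{example:sgd}), set $\psi(y) \defeq f(y;\statval) + \frac{\lambda(\statval,x_0)}{2}\norm{y-x}^2$, convex on $x_0 + 2\epsilon\ball$. Since the quadratic perturbation has vanishing gradient at $y = x$, the exact sum rule for Fr\'{e}chet subdifferentials gives $\partial\psi(x) = \partial f(x;\statval) \ni \subgrad(x;\statval)$; and because $\psi$ is convex near $x$ with $x$ interior to its domain of convexity, this Fr\'{e}chet subgradient is a subgradient in the convex sense, so $\psi(y) \ge \psi(x) + \<\subgrad(x;\statval), y-x\>$ for $y \in x_0 + \epsilon\ball$. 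Rearranging is exactly $f(y;\statval) \ge f(x;\statval) + \<\subgrad(x;\statval), y-x\> - \frac{\lambda(\statval,x_0)}{2}\norm{y-x}^2 = f_x(y;\statval) - \half\delta_\epsilon(x_0;\statval)\norm{y-x}^2$. (If one prefers to avoid subdifferential calculus, restrict $\psi$ to the segment $[x,y]$: $t \mapsto \psi(x + t(y-x))$ is convex on $[0,1]$, and $f(x + t(y-x);\statval) \ge f(x;\statval) + t\<\subgrad(x;\statval),y-x\> + o(t)$ since $\subgrad(x;\statval) \in \partial f(x;\statval)$, which forces the same inequality at $t=1$.)

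For the proximal-point models (Examples~\ref{example:stochastic-prox-point} and~\ref{example:guarded-stochastic-prox-point}), the model --- once the guard in~\eqref{eqn:guarded-stochastic-prox-point} is inactive --- is $f_x(y;\statval) = f(y;\statval) + \frac{\mu(\statval)}{2}\norm{y-x}^2$, where $\mu(\statval)$ is the global modulus $\lambda(\statval)$ for Example~\ref{example:stochastic-prox-point} and the local modulus $\lambda(\statval,x)$ for Example~\ref{example:guarded-stochastic-prox-point}. For Example~\ref{example:stochastic-prox-point} we may simply take the global modulus to be the Assumption~\ref{assumption:weak-convexity} modulus, so $\delta_\epsilon(x_0;\statval) = \lambda(\statval)$ and the inequality holds with equality. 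For Example~\ref{example:guarded-stochastic-prox-point}, domain-monotonicity gives $\lambda(\statval,x) \le \delta_\epsilon(x_0;\statval)$ for all $x \in x_0 + \epsilon\ball$ (taking $\lambda(\statval, x)$ to be the minimal valid modulus, which is dominated by the Assumption~\ref{assumption:weak-convexity} modulus on a slightly larger ball about $x_0$), whence $f_x(y;\statval) - \half\delta_\epsilon(x_0;\statval)\norm{y-x}^2 = f(y;\statval) + \frac{\lambda(\statval,x) - \delta_\epsilon(x_0;\statval)}{2}\norm{y-x}^2 \le f(y;\statval)$, as required.

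The only real obstacle is bookkeeping: reconciling the radius $\epsilon$ of Condition~\ref{item:upper-approximation}, the fixed guard radius of Example~\ref{example:guarded-stochastic-prox-point}, and the $\epsilon_0$'s of Assumptions~\ref{assumption:local-lipschitz}--\ref{assumption:weak-convexity}, and, relatedly, passing from the pointwise modulus $\lambda(\statval,x)$ appearing inside the guarded model to a single integrable majorant valid uniformly over $x \in x_0 + \epsilon\ball$. Both are dispatched by the monotonicity of the weak-convexity modulus under shrinking the domain together with an up-front shrinkage of $\epsilon_0$ (and, for Example~\ref{example:guarded-stochastic-prox-point}, of the guard radius); no analytic input beyond Assumption~\ref{assumption:weak-convexity} is used.
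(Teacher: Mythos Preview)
Your proposal is correct and takes the same approach as the paper, which dispatches the observation in a single clause: ``immediate by taking the lower approximation function $\delta_\epsilon(x;\statval) = \lambda(\statval, x)$ from Assumption~\ref{assumption:weak-convexity}.'' You have simply supplied the details the paper omits---in particular the radius bookkeeping and the uniform domination of $\lambda(\statval,x)$ over $x \in x_0 + \epsilon\ball$ needed for Example~\ref{example:guarded-stochastic-prox-point}---so your write-up is strictly more careful than the original.
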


We also provide conditions on the composite optimization
problem~\eqref{eqn:convex-composite-stochastic}, that is, when $f(x;
\statval) = h(c(x;\statval);\statval)$, sufficient for
Assumptions~\ref{assumption:local-lipschitz}--\ref{assumption:weak-convexity}
and Condition~\ref{item:upper-approximation} to hold. Standard
results~\cite{DrusvyatskiyIoLe16} show that $\partial f(x;
\statval) = \nabla c(x; \statval) \partial h(c(x;\statval);\statval)$, so
Assumption~\ref{assumption:local-lipschitz} holds if $\sup_{\norm{y - x} \le
  \epsilon} \norm{\nabla c(x;\statval) \partial h(c(x;\statval);\statval)}$
is integrable (with respect to $\statval$).  For
Assumption~\ref{assumption:weak-convexity}, we assume
that there exists $\epsilon_0 > 0$ such that
if $0 < \epsilon \le \epsilon_0$,
there exist functions $\liph_\epsilon : \R^d \times \statdomain \to \R_+
\cup \{+\infty\}$ and $\lipc_\epsilon : \R^d \times \statdomain \to \R_+
\cup \{+\infty\}$ such that $c(\cdot; \statval)$ has $\lipc_\epsilon(x;
\statval)$-Lipschitz gradients in an $\epsilon$ neighborhood of $x$, that
is,
\begin{equation*}
  \opnorm{\nabla c(y; \statval) - \nabla c(y'; \statval)}
  \le \lipc_\epsilon(x, \statval) \norm{y - y'}
  ~~ \mbox{for~} \norm{y - x}, \norm{y' - x} \le \epsilon,
\end{equation*}
and that $h(\cdot; \statval)$ is
$\liph_\epsilon(x; \statval)$-Lipschitz continuous on the compact
convex neighborhood
\begin{equation*}
  \conv \Big\{
  c(y; \statval) + \nabla c(y; \statval)^T (z - y)
  + v \mid
  y, z \in x + \epsilon \ball,
  \norm{v} \le \frac{\lipc_\epsilon(x, \statval)}{2}
  \norm{y - z}^2 \Big\}.
\end{equation*}
We then have the following claim;
see Appendix~\ref{sec:proof-composite-weak-convexity} for a proof.
\begin{claim}
  \label{claim:composite-weak-convexity}
  If $\E[\liph_\epsilon(x;\statrv) \lipc_\epsilon(x; \statrv)]
  < \infty$ for all $x \in \xdomain$, then
  Assumption~\ref{assumption:weak-convexity} holds
  with $\lambda(\statval,x) = \liph_\epsilon(x;\statval)
  \lipc_\epsilon(\statval)$, and
  Condition~\ref{item:upper-approximation}
  holds with $\delta_\epsilon(x;\statval)
  = \liph_\epsilon(x;\statval) \lipc_\epsilon(x;\statval)$.
\end{claim}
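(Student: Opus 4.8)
The plan is to derive both conclusions from a single second-order estimate comparing $f(y;\statval)=h(c(y;\statval);\statval)$ with its prox-linear model $f_x(y;\statval)=h(c(x;\statval)+\nabla c(x;\statval)^T(y-x);\statval)$ of Example~\ref{example:stochastic-prox-linear}: these differ by at most $\half\liph_\epsilon(x;\statval)\lipc_\epsilon(x;\statval)\norm{y-x}^2$ on the relevant ball. The estimate gives Condition~\ref{item:upper-approximation} directly, and Assumption~\ref{assumption:weak-convexity} follows by combining it with the convexity of $h$ through a dual (supremum-of-affine) representation, after one reduction of $h$ to a globally Lipschitz surrogate. I would carry out Condition~\ref{item:upper-approximation} first.

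\emph{Condition~\ref{item:upper-approximation}.} Fix $\statval$, $\epsilon\le\epsilon_0$, a base point $x_0$, and $x,y\in x_0+\epsilon\ball$. Integrating the $\lipc_\epsilon(x_0;\statval)$-Lipschitz bound on $\nabla c(\cdot;\statval)$ along the segment $[x,y]\subset x_0+\epsilon\ball$ gives the classical estimate $\norm{c(y;\statval)-c(x;\statval)-\nabla c(x;\statval)^T(y-x)}\le\half\lipc_\epsilon(x_0;\statval)\norm{y-x}^2$. The points $c(y;\statval)$ and $c(x;\statval)+\nabla c(x;\statval)^T(y-x)$ both lie in the compact convex neighborhood of the statement (its defining $\conv$, evaluated at dummy pair $(y,y)$ with $v=0$, respectively $(x,y)$ with $v=0$), on which $h(\cdot;\statval)$ is $\liph_\epsilon(x_0;\statval)$-Lipschitz, so
\begin{equation*}
  \begin{split}
    f(y;\statval)-f_x(y;\statval)
    &\ge-\liph_\epsilon(x_0;\statval)\norm{c(y;\statval)-c(x;\statval)-\nabla c(x;\statval)^T(y-x)} \\
    &\ge-\half\liph_\epsilon(x_0;\statval)\lipc_\epsilon(x_0;\statval)\norm{y-x}^2.
  \end{split}
\end{equation*}
This is Condition~\ref{item:upper-approximation} with $\delta_\epsilon(x_0;\statval)=\liph_\epsilon(x_0;\statval)\lipc_\epsilon(x_0;\statval)$, and $\E[\delta_\epsilon(x_0;\statrv)]<\infty$ is exactly the hypothesis.

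\emph{Assumption~\ref{assumption:weak-convexity}.} Fix $x,\statval,\epsilon$ and set $\lambda(\statval,x)=\liph_\epsilon(x;\statval)\lipc_\epsilon(x;\statval)$. Let $\mathcal K$ be the compact convex neighborhood of the statement and define $\tilde h(t;\statval)\defeq\inf_{u\in\mathcal K}\{h(u;\statval)+\liph_\epsilon(x;\statval)\norm{t-u}\}$, the inf-convolution of $h(\cdot;\statval)+\bigindic_{\mathcal K}$ with $\liph_\epsilon(x;\statval)\norm{\cdot}$. Then $\tilde h(\cdot;\statval)$ is finite, closed, convex and globally $\liph_\epsilon(x;\statval)$-Lipschitz; because $\mathcal K$ is convex and $h(\cdot;\statval)$ is $\liph_\epsilon(x;\statval)$-Lipschitz on it, $\tilde h(\cdot;\statval)=h(\cdot;\statval)$ on $\mathcal K$, and since $c(y;\statval)\in\mathcal K$ for $y\in x+\epsilon\ball$ we get $f(y;\statval)=\tilde h(c(y;\statval);\statval)$ there. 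By Fenchel--Moreau $\tilde h(t;\statval)=\sup_{\norm{g}\le\liph_\epsilon(x;\statval)}\{\<g,t\>-\tilde h^*(g;\statval)\}$, whence for $y\in x+\epsilon\ball$,
\begin{equation*}
  f(y;\statval)+\frac{\lambda(\statval,x)}{2}\norm{y-x_0}^2
  =\sup_{\norm{g}\le\liph_\epsilon(x;\statval)}\Big\{\<g,c(y;\statval)\>-\tilde h^*(g;\statval)+\frac{\lambda(\statval,x)}{2}\norm{y-x_0}^2\Big\}.
\end{equation*}
For each admissible $g$, the map $y\mapsto\<g,c(y;\statval)\>$ has gradient $\nabla c(y;\statval)g$, which is $\norm{g}\lipc_\epsilon(x;\statval)\le\lambda(\statval,x)$-Lipschitz on the convex set $x+\epsilon\ball$, so the bracketed function is convex there; a pointwise supremum of convex functions is convex, which proves Assumption~\ref{assumption:weak-convexity}, with $\E[\lambda(\statrv,x)]<\infty$ the hypothesis.

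The one place genuinely deserving care is the surrogate $\tilde h$: the hypothesis only asserts that $h(\cdot;\statval)$ is Lipschitz on $\mathcal K$, whereas the dual representation---equivalently, $\dom\tilde h^*\subset\liph_\epsilon(x;\statval)\ball$---needs a function Lipschitz on the whole ambient space. The inf-convolution supplies this, but one must check both that it agrees with $h$ on $\mathcal K$ (this uses convexity of $\mathcal K$) and that passing to $\tilde h$ does not change $f$ on $x+\epsilon\ball$, which is precisely why $\mathcal K$ is designed to contain every image $c(y;\statval)$ and its linearizations. A secondary subtlety is that $c(\cdot;\statval)$ is assumed only to have a Lipschitz gradient, not a second derivative, so the weak convexity of $y\mapsto\<g,c(y;\statval)\>$ must be obtained from monotonicity of $y\mapsto\nabla c(y;\statval)g+\lambda(\statval,x)y$ on the convex set $x+\epsilon\ball$, i.e.\ the elementary fact that a map with $L$-Lipschitz gradient on a convex set becomes convex after adding $\frac{L}{2}\norm{\cdot}^2$; both points are routine.
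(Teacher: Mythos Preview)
Your proof is correct. For Condition~\ref{item:upper-approximation} you do exactly what the paper does: bound the Taylor remainder of $c$ by $\half\lipc_\epsilon\norm{y-x}^2$ and multiply by the Lipschitz constant $\liph_\epsilon$ of $h$ on the designated neighborhood.

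For Assumption~\ref{assumption:weak-convexity} the arguments genuinely differ. The paper stays on the primal side: for any $z\in x+\epsilon\ball$ and any $g\in\partial h(c(z))$, the same remainder estimate combined with the subgradient inequality for $h$ gives
\[
  h(c(y))\ge h(c(z))+\<\nabla c(z)g,y-z\>-\tfrac{\lambda}{2}\norm{y-z}^2,
\]
and adding $\tfrac{\lambda}{2}\norm{y-x_0}^2$ to both sides exhibits $\nabla c(z)g+\lambda(z-x_0)$ as a subgradient of $y\mapsto h(c(y))+\tfrac{\lambda}{2}\norm{y-x_0}^2$ at every $z$ in the ball; a function subdifferentiable everywhere on a convex set is convex there. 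Your route is dual: you extend $h$ to a globally $\liph_\epsilon$-Lipschitz convex surrogate $\tilde h$ by inf-convolution, invoke Fenchel--Moreau to write $f(y;\statval)$ on $x+\epsilon\ball$ as $\sup_{\norm{g}\le\liph_\epsilon}\{\<g,c(y;\statval)\>-\tilde h^*(g)\}$, and then observe that each inner function has $\norm{g}\lipc_\epsilon\le\lambda$-Lipschitz gradient, so adding $\tfrac{\lambda}{2}\norm{\cdot-x_0}^2$ makes each term convex and the supremum stays convex. The paper's argument is shorter and uses no conjugate machinery; yours is a bit heavier (the envelope construction and the check that $\tilde h=h$ on $\mathcal K$ are genuine extra steps) but makes transparent the structural reason weak convexity holds---$f$ is locally a supremum of $\lambda$-smooth functions---which is the ``max-of-smooth'' picture of lower-$\mc{C}^2$ functions.
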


Theorem~\ref{theorem:informal-theorem-stationary-cluster-points}
does not guarantee convergence of the iterates, though it does guarantee
cluster points of $\{x_k\}$ have limiting \emph{values} in the image of the
stationary set. A slightly stronger technical assumption, which rules out
pathological functions such as Whitney's construction~\cite{Whitney35}, is
the following assumption, which is related to Sard's results that the
measure of critical values of $\mc{C}^d$-smooth $f : \R^d \to \R$ is zero.
\begin{assumption}
  \label{assumption:countable-regularity}
  The set $(F(\stationary))^c$ is dense in $\R$.
\end{assumption}
\noindent
If $f$ is convex then $(f + \regularizer)(\stationary)$ is a singleton.
Moreover, if the set of stationary points $\stationary$ consists of a
(finite or countable) collection of sets $\stationary_1, \stationary_2,
\ldots$ such that $f + \regularizer$ is constant on each $\stationary_i$,
then $F(\xdomain\opt)$ is at most countable and
Assumption~\ref{assumption:countable-regularity} holds.  In subsequent work
to the first version of this paper, \citet{DavisDrKaLe18} give sufficient
conditions for Assumption~\ref{assumption:countable-regularity} to hold (see
also~\cite{Ioffe08,Ioffe17,BolteDaLeSh07}).  We have
\begin{corollary}
  \label{corollary:informal}
  In addition to the conditions of
  Theorem~\ref{theorem:informal-theorem-stationary-cluster-points}, let
  Assumption~\ref{assumption:countable-regularity}
  hold. Then $f(x_k) + \regularizer(x_k)$ converges, and all cluster points
  of the sequence $\{x_k\}$ belong to $\stationary$.
\end{corollary}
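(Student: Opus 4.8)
The plan is to combine the interval containment~\eqref{eqn:subset-containment-limits} from Theorem~\ref{theorem:informal-theorem-stationary-cluster-points} with the density of $(F(\stationary))^c$ guaranteed by Assumption~\ref{assumption:countable-regularity} to force $\liminf_k F(x_k) = \limsup_k F(x_k)$, and then to upgrade convergence of the \emph{values} $F(x_k)$ to the statement about cluster points of the \emph{iterates}. First I would observe that the interval $I \defeq [\liminf_k F(x_k), \limsup_k F(x_k)]$ is a genuine (closed, bounded) interval of $\R$: boundedness follows because $\xdomain$ is compact and $F = f + \regularizer$ is continuous on $\xdomain$ (so $\{F(x_k)\}$ is a bounded sequence). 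By~\eqref{eqn:subset-containment-limits}, $I \subset F(\stationary)$. But if $I$ had nonempty interior, then $I$ would contain an open subinterval, which would have to be disjoint from the dense set $(F(\stationary))^c$ --- a contradiction, since a dense set meets every nonempty open set. Hence $I$ is a single point, i.e. $\liminf_k F(x_k) = \limsup_k F(x_k)$, which is exactly the statement that $F(x_k) = f(x_k) + \regularizer(x_k)$ converges, to some value $F^\star \in F(\stationary)$.

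For the second conclusion, let $\wb x$ be any cluster point of $\{x_k\}$, say $x_{k_j} \to \wb x$; such a point exists by compactness of $\xdomain$, and $\wb x \in \xdomain$ since $\xdomain$ is closed. I would argue $\wb x \in \stationary$ in two steps. The value part is easy: by continuity of $F$ on $\xdomain$ we get $F(\wb x) = \lim_j F(x_{k_j}) = F^\star$. The stationarity part, however, requires more --- knowing only that $F(\wb x) \in F(\stationary)$ does \emph{not} by itself imply $\wb x \in \stationary$, since a nonstationary point could in principle share a value with some stationary point. Here I expect the cleanest route is to invoke (the full version of) the main theorem: Theorem~\ref{theorem:stationary-cluster-points-extension} in Section~\ref{sec:as-convergence} is stated as strengthening the informal version, and the analysis via the differential inclusion (Sections~\ref{sec:diff-inclusion-properties}--\ref{sec:as-convergence}) in fact shows that cluster points of the iterates lie in the set $\{x : \text{$\exists g \in \partial F(x) + \normalcone_\xdomain(x)$, $g = 0$}\} \cup \{\text{points on limiting trajectories}\}$; combined with the now-established convergence of $F(x_k)$ and the outer semicontinuity of $\partial f$ (Lemma~\ref{lemma:frechet-subgradients-of-f}), one concludes $0 \in \partial f(\wb x) + \partial \regularizer(\wb x) + \normalcone_\xdomain(\wb x)$, i.e. $\wb x \in \stationary$ by the characterization~\eqref{eqn:stationary}.

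The main obstacle is precisely this last implication: bridging ``the value $F(x_k)$ converges'' to ``every cluster point is stationary.'' The subtlety is that Theorem~\ref{theorem:informal-theorem-stationary-cluster-points} as stated controls only the \emph{range} of limit values, not the geometry of limit points, so the corollary genuinely needs an extra ingredient --- either the trajectory-level analysis from Sections~\ref{sec:functional-convergence}--\ref{sec:as-convergence} (which shows the iterates track a Lyapunov-stable differential inclusion whose rest points are exactly $\stationary$, so any cluster point that is not stationary would be ``flowed away from,'' contradicting that the nearby function values have all collapsed to the single level $F^\star$), or an argument that under Assumption~\ref{assumption:countable-regularity} the sublevel-type sets are well-separated enough that the iterates cannot oscillate between a stationary and a non-stationary region at the common level $F^\star$. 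I would develop the former, since it reuses machinery already built for Theorem~\ref{theorem:stationary-cluster-points-extension}; the remaining steps (boundedness of $\{F(x_k)\}$, the topological ``dense complement forces a degenerate interval'' observation, and continuity of $F$) are routine.
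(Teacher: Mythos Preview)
Your proposal is correct and follows the paper's approach. The density argument forcing the interval~\eqref{eqn:subset-containment-limits} to collapse to a point is exactly how the paper argues convergence of $F(x_k)$; for the cluster-point conclusion, the ``flowed away from'' heuristic you describe is precisely what the paper packages as Lemma~\ref{lemma:extreme-value-stationary-point} (inequality~\eqref{eqn:F-must-grow-around}): a non-stationary cluster point would force $F$ to strictly decrease through $F(x_\infty)=F^\star$ along the interpolated path, contradicting the already-established convergence $F(x_k)\to F^\star$.
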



\section{Convergence Analysis of the Algorithm}
\label{sec:convergence-analysis}

In this section, we present the arguments necessary to prove
Theorem~\ref{theorem:informal-theorem-stationary-cluster-points} and its
extensions, beginning with a heuristic explanation.  By inspection and a
strong faith in the limiting behavior of random iterations, we expect that
the update scheme~\eqref{eqn:model-based-minimization}, as the stepsizes
$\stepsize_k \to 0$, are asymptotically approximately equivalent to
iterations of the form
\begin{equation*}
  \frac{x_{k + 1} - x_k}{\stepsize_k}
  \approx - \left[\subgrad(x_k)
  + v_k + w_k\right],
  ~
  \subgrad(x_k) \in \partial f(x_k),
  ~ v_k \in \partial \regularizer(x_{k+1}),
  ~ w_k \in \normalcone_X(x_{k+1}),
\end{equation*}
and the correction $w_k$ enforces $x_{k + 1} \in X$.  As $k \to
\infty$ and $\stepsize_k \to 0$, we may (again, deferring rigor) treat
$\lim_k \frac{1}{\stepsize_k} (x_{k + 1} - x_k)$ as a continuous time
process, suggesting that update schemes of the
form~\eqref{eqn:model-based-minimization} are asymptotically equivalent to a
continuous time process $t \mapsto x(t) \in \R^d$ that satisfies the
differential inclusion (a set-valued generalization of an ordinary
differential equation)
\begin{equation}
  \label{eqn:differential-inclusion}
    \dot{x}
    \in
    -\partial f(x)
    - \partial \regularizer(x)
    - \normalcone_X(x) \\
    = - \int \partial f(x; \statval) dP(\statval)
    - \partial \regularizer(x)
    - \normalcone_X(x).
\end{equation}

We develop a general convergence result showing that this limiting
equivalence is indeed the case and that the second equality of
expression~\eqref{eqn:differential-inclusion} holds. As part of this, we
explore in the coming sections how the weak convexity structure of $f(\cdot;
\statval)$ guarantees that the differential
inclusion~\eqref{eqn:differential-inclusion} is well-behaved.
%
%
We begin in Section~\ref{sec:preliminaries-differential-inclusions} with
preliminaries on set-valued analysis and differential inclusions we require,
which build on standard convergence results~\cite{AubinCe84, Kunze00}. Once
we have presented these preliminary results, we show how the stochastic
iterations~\eqref{eqn:model-based-minimization} eventually approximate
solution paths to differential inclusions
(Section~\ref{sec:functional-convergence}), which builds off of a number of
stochastic approximation results and the so-called ``ODE method'' Ljung
develops~\cite{Ljung77}, (see also~\cite{KushnerYi03, BenaimHoSo05,
  Borkar08}).  We develop the analytic properties of the composite
objective, which yields the uniqueness of trajectories
solving~\eqref{eqn:differential-inclusion} as well as a particular Lyapunov
convergence inequality
(Section~\ref{sec:diff-inclusion-properties}). Finally, we develop stability
results on the differential inclusion~\eqref{eqn:differential-inclusion},
which allows us to prove convergence as in
Theorem~\ref{theorem:informal-theorem-stationary-cluster-points}
(Section~\ref{sec:as-convergence}).

\subsection{Preliminaries: differential inclusions and set-valued analysis}
\label{sec:preliminaries-differential-inclusions}

\newcommand{\edosc}{$\epsilon$-$\delta$~o.s.c.\xspace}

We now review a few results in set-valued analysis and
differential inclusions~\cite{AubinCe84,Kunze00}.
Our notation and definitions follow closely the references of
\citet{RockafellarWe98} and \citet{AubinCe84}, and we cite a few results
from the book of \citet{Kunze00}.

Given a sequence of sets $A_n \subset \R^d$, the limit supremum of the sets
consists of limit points of subsequences $y_{n_k} \in A_{n_k}$, that is,
\begin{equation*}
  \limsup_n A_n \defeq \left\{y : \exists ~ y_{n_k} \in A_{n_k}
  ~ \mbox{s.t.} ~ y_{n_k} \to y ~ \mbox{as}~ k \to \infty \right\}.
\end{equation*}
We let $G : X \toto \R^d$ denote a set-valued mapping $G$ from $X$ to $\R^d$,
and we define $\dom G \defeq \{x : G(x) \neq \emptyset\}$.  Then $G$ is
\emph{outer semicontinuous (o.s.c.)} if for any sequence $x_n \to x \in \dom G$, we
have $\limsup_n G(x_n) \subset G(x)$.  One says that
$G$ is \emph{$\epsilon$-$\delta$
  outer semicontinuous}~\cite[Def.~1.1.5]{AubinCe84} if for all $x$ and
$\epsilon > 0$, there exists $\delta > 0$ such that
$G(x + \delta \ball) \subset G(x) + \epsilon \ball$. These notions coincide
when $G(x)$ is bounded.
Two standard examples of outer-semicontinuous mappings follow.
\begin{lemma}[Hiriart-Urruty and Lemar\'echal~\cite{HiriartUrrutyLe93ab},
  Theorem VI.6.2.4]
  \label{lemma:subg-outer-semicontinuity}
  Let $f : \R^d \to \R \cup \{+\infty\}$ be convex. Then
  the subgradient mapping $\partial f : \interior \dom f \toto \R^d$ is
  o.s.c.
\end{lemma}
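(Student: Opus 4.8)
The plan is to verify the sequential form of outer semicontinuity directly from the subgradient inequality. Fix $x \in \interior \dom f$ and a sequence $x_n \to x$, and let $g \in \limsup_n \partial f(x_n)$, so there are a subsequence $x_{n_k} \to x$ and elements $g_{n_k} \in \partial f(x_{n_k})$ with $g_{n_k} \to g$; the goal is to show $g \in \partial f(x)$. Since $x$ lies in the interior of $\dom f$, we may assume $x_{n_k} \in \interior \dom f$ for all $k$, and because $f$ is convex its Fr\'echet and convex subdifferentials coincide, so each subgradient inequality reads $f(y) \ge f(x_{n_k}) + \<g_{n_k}, y - x_{n_k}\>$ for every $y \in \R^d$.

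The one substantive ingredient is that $f(x_{n_k}) \to f(x)$: a proper convex function is continuous---indeed locally Lipschitz---on the interior of its domain~\cite{HiriartUrrutyLe93ab}, and $x \in \interior \dom f$, so continuity at $x$ gives $f(x_{n_k}) \to f(x)$. Then, fixing an arbitrary $y \in \R^d$ and passing to the limit $k \to \infty$ in $f(y) \ge f(x_{n_k}) + \<g_{n_k}, y - x_{n_k}\>$ yields $f(y) \ge f(x) + \<g, y - x\>$. Since $y$ was arbitrary, $g \in \partial f(x)$, which is exactly $\limsup_n \partial f(x_n) \subset \partial f(x)$; as $x_n \to x$ was arbitrary, $\partial f$ is o.s.c. on $\interior \dom f$.

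For completeness, and to connect with the \edosc notion, I would also observe that $\partial f$ is locally bounded on $\interior \dom f$: if $f$ is $L$-Lipschitz on a ball $x + r\ball \subset \dom f$, then every $g \in \partial f(y)$ with $y$ near $x$ satisfies $\norm{g} \le L$ (insert $y' = y + r g / \norm{g}$ into the subgradient inequality and use Lipschitzness). Combined with o.s.c., local boundedness upgrades the conclusion to the $\epsilon$-$\delta$ form of outer semicontinuity, matching the remark that the two notions coincide when the images are bounded.

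The ``hard part'' here is really just pinpointing where the hypothesis $x \in \interior \dom f$ enters: it is needed precisely to guarantee $f(x_{n_k}) \to f(x)$, since convex functions can be discontinuous on the boundary of their domain and the limiting step in the subgradient inequality would otherwise fail. Everything else is a routine passage to the limit, so I expect no genuine obstacle beyond citing the continuity of convex functions on the interior of the domain.
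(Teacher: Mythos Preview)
Your argument is correct and is essentially the standard textbook proof of this fact. Note that the paper does not supply its own proof of this lemma at all---it simply cites it as Theorem~VI.6.2.4 of Hiriart-Urruty and Lemar\'echal---so there is nothing further to compare; your passage-to-the-limit argument using continuity of $f$ on $\interior\dom f$ is exactly what one finds in that reference.
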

\begin{lemma}[Rockafellar and Wets~\cite{RockafellarWe98},
  Proposition 6.6]
  \label{lemma:normal-outer-semicontinuity}
  Let $X$ be a closed convex set. Then the normal cone mapping
  $\normalcone_\xdomain : X \toto \R^d$ is
  o.s.c.\ on $X$.
\end{lemma}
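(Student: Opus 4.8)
The plan is to prove the stronger statement that the graph of $\normalcone_\xdomain$ is a closed subset of $\R^d \times \R^d$; since $\dom \normalcone_\xdomain = \xdomain$ is closed, this is more than enough to conclude that $\normalcone_\xdomain$ is o.s.c.\ in the sense defined above. Unwinding the definition, I would take an arbitrary sequence $x_n \to x$ with $x_n \in \xdomain$, together with a convergent subsequence $v_{n_k} \to v$ with $v_{n_k} \in \normalcone_\xdomain(x_{n_k})$, and show $v \in \normalcone_\xdomain(x)$ (here $x \in \xdomain$ automatically, as $\xdomain$ is closed).

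First I would fix an arbitrary $y \in \xdomain$. The definition of the normal cone gives $\<v_{n_k}, y - x_{n_k}\> \le 0$ for every $k$. Because $v_{n_k} \to v$ and $x_{n_k} \to x$ are convergent (hence bounded) sequences, the bilinear pairing is continuous along them: $\<v_{n_k}, y - x_{n_k}\> \to \<v, y - x\>$. A nonpositive sequence has nonpositive limit, so $\<v, y - x\> \le 0$. Since $y \in \xdomain$ was arbitrary, this says exactly that $v \in \normalcone_\xdomain(x)$, and therefore $\limsup_n \normalcone_\xdomain(x_n) \subset \normalcone_\xdomain(x)$, i.e., $\normalcone_\xdomain$ is o.s.c.

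Equivalently, and perhaps more transparently, for each fixed $y \in \xdomain$ the set $C_y \defeq \{(x,v) \in \R^d \times \R^d : \<v, y - x\> \le 0\}$ is closed because $(x,v) \mapsto \<v, y-x\>$ is continuous; hence $\gr \normalcone_\xdomain = (\xdomain \times \R^d) \cap \bigcap_{y \in \xdomain} C_y$ is an intersection of closed sets, so it is closed. This is just the observation that $\normalcone_\xdomain$ is cut out by a family of jointly continuous inequalities in $(x,v)$.

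There is essentially no obstacle here: the only subtlety is purely definitional, namely that outer semicontinuity only constrains limits $x_n \to x$ with $x \in \dom \normalcone_\xdomain$, and closedness of $\xdomain$ supplies this automatically; everything else is continuity of a bilinear map on bounded sets. If one instead wanted the $\epsilon$-$\delta$ form of outer semicontinuity, it would generally fail because $\normalcone_\xdomain(x)$ is typically unbounded (it is a cone), which is exactly why only the sequential notion is asserted.
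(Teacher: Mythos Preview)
Your proof is correct and is the standard argument: closedness of $\gr \normalcone_\xdomain$ follows immediately from continuity of the bilinear form $(x,v)\mapsto \<v,y-x\>$ and closedness of $\xdomain$. The paper does not supply its own proof of this lemma; it simply quotes the result from \cite[Proposition~6.6]{RockafellarWe98}, so there is nothing to compare against beyond noting that your argument is exactly the one underlying that reference.
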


The \emph{differential inclusion} associated with $G$
beginning from the point $x_0$, denoted
\begin{equation}
  \label{eqn:simple-differential-inclusion}
  \dot{x} \in G(x), ~~ x(0) = x_0
\end{equation}
has a solution if there exists an absolutely continuous function
$x : \R_+ \to \R^d$ satisfying $\frac{d}{dt} x(t) = \dot{x}(t) \in G(x(t))$ for all
$t \ge 0$. For $G : \mc{T} \toto \R^d$ and
a measure $\mu$ on $\mc{T}$,
\begin{equation*}
  \int_{\mc{T}} G d\mu =
  \int_{\mc{T}} G(t) d\mu(t) \defeq \left\{\int g(t) d\mu(t)
  \mid g(t) \in G(t) ~ \mbox{for~}t \in \mc{T},
  ~ g ~ \mbox{measurable} \right\}.
\end{equation*}
With these definitions, the
following results (with minor extension) on the existence and uniqueness of
solutions to differential inclusions are standard.
\begin{lemma}[Aubin and Cellina~\cite{AubinCe84}, Theorem 2.1.4]
  \label{lemma:aubin-existence}
  Let $G : X \toto \R^d$ be outer semicontinuous and compact-valued, and
  $x_0 \in X$. Assume there is $K < \infty$
  such that $\dist(0, G(x)) \le K$ for all $x$.
  Then there exists an absolutely
  continuous function $x : \R_+ \to \R^d$ such that $\dot{x}(t) \in G(x(t))$ and
  $x(t) \in x_0 + \int_0^t G(x(\tau)) d\tau$ for all $t \in \R_+$.
\end{lemma}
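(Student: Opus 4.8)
The plan is to follow the classical Euler-polygon (Peano-type) construction: build piecewise-linear approximate solutions, extract a subsequence converging uniformly on compact sets, and pass to the limit in the inclusion using outer semicontinuity. First I would fix, for each $n$, the uniform partition of $\R_+$ into intervals $[t_i^n, t_{i+1}^n]$ of mesh $1/n$, set $x_n(0) = x_0$, and on $[t_i^n, t_{i+1}^n]$ define $x_n(t) = x_n(t_i^n) + (t - t_i^n)\, g_i^n$, where $g_i^n$ is a minimal-norm element of $G(x_n(t_i^n))$, so that $\norm{g_i^n} = \dist(0, G(x_n(t_i^n))) \le K$ (the minimum is attained since $G(x_n(t_i^n))$ is nonempty and compact). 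Each $x_n$ is then $K$-Lipschitz on $\R_+$ with $x_n(0) = x_0$, and $\dot x_n(t) = g_i^n \in G(x_n(t_i^n))$ on the $i$-th subinterval.

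Next I would establish compactness. Since the $x_n$ are $K$-Lipschitz with $\norm{x_n(t)} \le \norm{x_0} + Kt$, the family is equicontinuous and bounded on every compact interval, so Arzel\`a--Ascoli in the Fr\'echet space $\continuous(\R_+, \R^d)$ produces a subsequence $x_n \to x$ uniformly on compacts, with $x$ $K$-Lipschitz (hence absolutely continuous) and $x(0) = x_0$. The derivatives $\dot x_n$ are bounded in $L^\infty([0,T];\R^d)$, hence (after a further diagonal extraction over $T = 1,2,\dots$) $\dot x_n \to v$ weakly in $L^2([0,T];\R^d)$ for every $T$; testing the identity $x_n(t) = x_0 + \int_0^t \dot x_n$ against indicator functions gives $x(t) = x_0 + \int_0^t v$, i.e.\ $v = \dot x$ a.e.

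It then remains to show $\dot x(t) \in G(x(t))$ for a.e.\ $t$. Writing $\tau_n(t)$ for the left endpoint of the mesh-$1/n$ subinterval containing $t$, we have $\dot x_n(t) \in G(x_n(\tau_n(t)))$ and $\norm{x_n(\tau_n(t)) - x(t)} \le K/n + \sup_{[0,T]}\norm{x_n - x} \to 0$. Because $\dot x_n \to \dot x$ only weakly, I would apply Mazur's lemma to obtain convex combinations $w_n = \sum_{j \ge 0}\mu_j^n \dot x_{n + j}$ converging to $\dot x$ strongly in $L^2$, hence pointwise a.e.\ along a subsequence. Fix a good $t$; given $\epsilon > 0$, the $\epsilon$-$\delta$ outer semicontinuity of $G$ (equivalent to o.s.c.\ here, as $G$ is compact-valued) gives $\delta > 0$ with $G(x(t) + \delta\ball) \subset G(x(t)) + \epsilon\ball$, so for $n$ large every $\dot x_{n+j}(t)$, $j \ge 0$, lies in the convex set $G(x(t)) + \epsilon\ball$, whence so does $w_n(t)$; letting $n \to \infty$ and then $\epsilon \downarrow 0$ yields $\dot x(t) \in \overline{\conv}\, G(x(t))$. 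When $G$ is convex-valued---as for the right-hand side $-\partial f(x) - \partial\regularizer(x) - \normalcone_X(x)$ of~\eqref{eqn:differential-inclusion}, a sum of convex sets---this is exactly $\dot x(t) \in G(x(t))$. The integral conclusion then follows immediately: $\dot x$ is measurable with $\dot x(\tau) \in G(x(\tau))$ a.e., so it is a measurable selection of $\tau \mapsto G(x(\tau))$ and $x(t) = x_0 + \int_0^t \dot x(\tau)\, d\tau \in x_0 + \int_0^t G(x(\tau))\, d\tau$; moreover, by Lyapunov's convexity theorem $\int_0^t \overline{\conv}\, G(x(\tau))\, d\tau = \int_0^t G(x(\tau))\, d\tau$, so this last inclusion holds even without convex values.

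The main obstacle is precisely this final passage to the limit: a weak $L^2$-limit of selections of $G(x_n(\cdot))$ need not be a pointwise selection of $G(x(\cdot))$, and recovering membership requires the convexification supplied by Mazur's lemma together with convexity (or convex-hull passage) of the values and outer semicontinuity of $G$. Two minor technical points remain: first, the uniform bound $\dist(0, G(x)) \le K$ makes the Euler polygons uniformly Lipschitz with no finite-time blow-up, so the solution extends to all of $\R_+$ without any concatenation argument; second, for the Euler iterates $x_n(t_i^n)$ to lie in $\dom G$ one takes $G$ defined with bounded selections on all of $\R^d$---in the application of~\eqref{eqn:differential-inclusion} the set $X \cap \dom\regularizer$ is preserved a posteriori by the normal-cone and subgradient terms---which is the ``minor extension'' referred to above.
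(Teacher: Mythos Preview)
The paper does not prove this lemma; it simply cites it as Theorem~2.1.4 of Aubin and Cellina~\cite{AubinCe84}. Your Euler-polygon construction with Arzel\`a--Ascoli compactness and Mazur's lemma to pass weak limits into the inclusion is exactly the classical argument given there, so there is nothing to compare.

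One point worth flagging: you correctly observe that your argument yields only $\dot x(t)\in\overline{\conv}\,G(x(t))$ unless $G$ is convex-valued. The lemma as stated in the paper omits ``convex-valued'' from the hypotheses, but Aubin--Cellina's Theorem~2.1.4 does assume it, and every application in the paper (in particular Theorem~\ref{theorem:monotone-trajectory} via Lemma~\ref{lemma:monotone-trajectory}) is to the convex-valued map $-\partial f-\partial\regularizer-\normalcone_X$. So the omission is a harmless slip in the paper's transcription of the hypothesis, and your proof is complete for the result as actually used. Your invocation of Aumann/Lyapunov to recover the integral inclusion even without convex values is a nice extra, though unnecessary here.
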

\begin{lemma}[Kunze~\cite{Kunze00}, Theorem 2.2.2]
  \label{lemma:kunze-existence}
  Let the conditions of Lemma~\ref{lemma:aubin-existence} hold and assume
  there
  exists $c < \infty$ such that
  \begin{equation*}
    \<x_1 - x_2, g_1 - g_2\> \le c \norm{x_1 - x_2}^2
    ~~ \mbox{for~} g_i \in G(x_i) ~ \mbox{and~all~}
    x_i \in \dom G.
  \end{equation*}
  Then the solution to the differential
  inclusion~\eqref{eqn:simple-differential-inclusion}
  is unique.
\end{lemma}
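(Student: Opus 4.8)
The plan is to run the standard Gronwall argument for one-sided Lipschitz (monotone-type) differential inclusions, so the hypotheses of Lemma~\ref{lemma:aubin-existence} are used only to supply \emph{existence}; the one-sided Lipschitz bound does all the work for uniqueness. Suppose $x, y : \R_+ \to \R^d$ are two absolutely continuous solutions to~\eqref{eqn:simple-differential-inclusion} with $x(0) = y(0) = x_0$. Set $u(t) \defeq x(t) - y(t)$ and $\phi(t) \defeq \norm{u(t)}^2$. First I would verify that $\phi$ is locally absolutely continuous: $x$ and $y$ are absolutely continuous, hence so is $u$, and on any compact interval $[0,T]$ the function $u$ takes values in a bounded set, so composing $u$ with the $C^1$ map $s \mapsto \norm{s}^2$ (Lipschitz on bounded sets) preserves absolute continuity. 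The ordinary chain rule then gives $\phi'(t) = 2\<u(t), \dot u(t)\> = 2\<x(t) - y(t), \dot x(t) - \dot y(t)\>$ for a.e.\ $t$, namely at the full-measure set of times where both $\dot x(t)$ and $\dot y(t)$ exist.

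Next, at any such $t$ the solution property gives $\dot x(t) \in G(x(t))$ and $\dot y(t) \in G(y(t))$, so in particular $x(t), y(t) \in \dom G$, and the one-sided Lipschitz hypothesis applies with $g_1 = \dot x(t)$, $g_2 = \dot y(t)$, $x_1 = x(t)$, $x_2 = y(t)$, yielding
\[
  \<x(t) - y(t), \dot x(t) - \dot y(t)\> \le c \norm{x(t) - y(t)}^2 .
\]
Combining this with the chain-rule identity gives $\phi'(t) \le 2c\,\phi(t)$ for a.e.\ $t \ge 0$, with $\phi \ge 0$ and $\phi(0) = 0$.

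Finally I would integrate this differential inequality, using an integrating factor so that the sign of $c$ is irrelevant: let $\psi(t) \defeq e^{-2ct}\phi(t)$, which is absolutely continuous (a product of an absolutely continuous and a smooth function) and satisfies $\psi'(t) = e^{-2ct}\big(\phi'(t) - 2c\,\phi(t)\big) \le 0$ for a.e.\ $t$. Hence $\psi$ is nonincreasing on $\R_+$, so $0 \le \psi(t) \le \psi(0) = \phi(0) = 0$ for all $t \ge 0$; equivalently $\phi \equiv 0$, i.e.\ $x(t) = y(t)$ for all $t \ge 0$. As $x, y$ were arbitrary solutions with the prescribed initial condition, the solution is unique.

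The only point requiring care — and what I would flag as the main thing to get right — is the measure-theoretic bookkeeping of the first step: one needs $\phi$ (equivalently $\psi$) to be genuinely absolutely continuous, not merely differentiable a.e., so that $\psi(t) = \psi(0) + \int_0^t \psi'$ holds and the monotonicity conclusion is valid, and one needs the chain-rule identity for $\phi'$ to hold exactly on the full-measure set where the inclusions $\dot x(t)\in G(x(t))$ and $\dot y(t)\in G(y(t))$ are meaningful. Outer semicontinuity, compact-valuedness, and the bound $\dist(0,G(x))\le K$ enter only through Lemma~\ref{lemma:aubin-existence} to guarantee solutions exist; they play no further role. The ``minor extension'' over the cited statement is merely that we allow $G$ to be defined on a proper closed subset $X\subset\R^d$ rather than all of $\R^d$, which changes nothing, since any solution automatically satisfies $x(t)\in\dom G$ wherever $\dot x(t)$ exists.
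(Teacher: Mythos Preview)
The paper does not supply a proof of this lemma; it is quoted as a preliminary result from \citet{Kunze00} (Theorem~2.2.2) and is used as a black box in Section~\ref{sec:proof-monotone-trajectory}. Your argument is the standard one-sided Lipschitz/Gronwall proof of uniqueness and is correct as written, including the care you take with absolute continuity of $\phi$ so that the integrating-factor step is valid.
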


We recall basic Lyapunov theory for differential inclusions.  Let $V : X \to
\R_+$ be a non-negative function and $W : X \times \R^d \to \R_+$ be
continuous with $v \mapsto W(x, v)$ convex in $v$ for all $x$. A trajectory
$\dot{x} \in G(x)$ is \emph{monotone} for the pair $V, W$ if
\begin{equation*}
  V(x(T)) - V(x(0)) + \int_0^T W(x(t), \dot{x}(t)) dt \le 0
  ~~ \mbox{for~} T \ge 0.
\end{equation*}
The next lemma gives sufficient conditions for the existence of
monotone trajectories.
\begin{lemma}[Aubin and Cellina~\cite{AubinCe84}, Theorem 6.3.1]
  \label{lemma:monotone-trajectory}
  Let $G : X \toto \R^d$ be outer semicontinuous and compact-convex valued.
  Assume that for each $x$
  there exists
  $v \in G(x)$ such that $V'(x; v) + W(x; v) \le 0$. Then there exists
  a trajectory of the differential inclusion $\dot{x} \in G(x)$ such that
  \begin{equation*}
    V(x(T)) - V(x(0)) + \int_0^T W(x(t), \dot{x}(t)) dt \le 0.
  \end{equation*}
\end{lemma}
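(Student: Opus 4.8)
The plan is to reduce the claim to a \emph{viability} statement for a lifted differential inclusion. We adjoin to the state $x$ a scalar coordinate $r \in \R$ whose dynamics force it to decrease at rate at least $W(x,\dot x)$, and we then ask that the pair $(x,r)$ stay in the epigraph of $V$. Along any trajectory with these two properties we have $V(x(t)) \le r(t)$ for all $t$, while $r(T) \le r(0) - \int_0^T W(x(t),\dot x(t))\,dt$; choosing the initial value $r(0) = V(x(0))$ then yields $V(x(T)) - V(x(0)) + \int_0^T W(x(t),\dot x(t))\,dt \le 0$ for all $T \ge 0$, and the $x$-component is by construction a trajectory of $\dot x \in G(x)$. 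Thus it suffices to produce such a lifted trajectory.

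Concretely, assuming as we may (and as holds in the applications, where $\xdomain$ is compact) that $\norm{G(x)} \le K < \infty$ uniformly, pick $C < \infty$ with $W(x,v) \le C$ whenever $v \in G(x)$ (possible since $W$ is continuous and the relevant set of $(x,v)$ is bounded), and define $H : \R^{d+1} \toto \R^{d+1}$ and the set $M$ by
\[
  H(x,r) \defeq \big\{(v,s) : v \in G(x),\, -C \le s \le -W(x,v)\big\},
  \qquad M \defeq \{(x,r) : V(x) \le r\}.
\]
The argument is then in four steps. (i) \emph{Structure of $H$}: because $G$ is outer semicontinuous with convex compact values and $v \mapsto W(x,v)$ is convex and continuous, $H$ is outer semicontinuous with convex compact values, and it is bounded; in particular Lemma~\ref{lemma:aubin-existence} applies to $H$. (ii) \emph{$M$ is closed}, which holds as soon as $V$ is lower semicontinuous (it is in fact continuous in all our uses). (iii) \emph{The tangential (Nagumo) condition holds}: for every $(x,r) \in M$ there is a vector of $H(x,r)$ lying in the contingent cone $\tangentcone_M(x,r)$. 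If $V(x) < r$ this is automatic. If $V(x) = r$, the contingent cone to the epigraph of $V$ equals the epigraph of the subderivative (lower Dini derivative), which in the paper's notation is precisely $v \mapsto V'(x;v)$, so $\tangentcone_M(x,V(x)) = \{(v,s) : V'(x;v) \le s\}$; for the vector $v \in G(x)$ supplied by the hypothesis we have $V'(x;v) \le -W(x;v)$, so $(v, -W(x,v))$ belongs to both $H(x,V(x))$ and $\tangentcone_M(x,V(x))$. (iv) \emph{Conclusion}: by the viability theorem for bounded outer semicontinuous convex-compact-valued maps (Haddad; see~\cite[\S~4.2]{AubinCe84}), from an arbitrary initial point $(x_0, V(x_0)) \in M$ there is a trajectory of $(\dot x, \dot r) \in H(x,r)$ that remains in $M$ for all $t \ge 0$, and as noted this trajectory furnishes the desired monotone $x(\cdot)$.

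I expect the main obstacle to be Step (iii): recognizing the hypothesis $V'(x;v) + W(x;v) \le 0$ as exactly the Nagumo tangency condition for the epigraph of $V$ under the lifted dynamics $H$. This rests on the (standard but delicate) identification of the contingent cone to an epigraph with the epigraph of the subderivative, and it tacitly requires $V$ to be lower semicontinuous, a hypothesis that is implicit in the statement. A lesser issue is that the argument invokes a viability theorem not among the preliminaries recorded above; one can instead avoid it by a Zorn's-lemma maximality argument over partial trajectories satisfying the inequality on finite intervals $[0,T]$ (chains close up by the usual Arzel\`a--Ascoli / convergence theorem for differential inclusions, with the term $\int_0^T W(x(t),\dot x(t))\,dt$ passing to the limit by weak lower semicontinuity of convex integral functionals, which is where convexity of $W(x,\cdot)$ enters). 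Extending a maximal trajectory across its right endpoint, however, again reduces to precisely the local tangency argument of Step (iii), so that step is the genuine crux either way.
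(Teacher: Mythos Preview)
The paper does not prove this lemma: it is quoted verbatim as Theorem~6.3.1 of Aubin and Cellina~\cite{AubinCe84} and used as a black box in the proof of Theorem~\ref{theorem:monotone-trajectory}. There is therefore no in-paper proof to compare against.

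That said, your sketch is essentially the proof Aubin and Cellina give. They lift to the epigraph of $V$, introduce exactly the set-valued map you call $H$ (their $F_W$), verify the contingent-cone tangency condition using the hypothesis $V'(x;v) + W(x,v) \le 0$, and invoke the viability theorem of Haddad (their Chapter~4). Your identification of Step~(iii) as the crux is accurate, and your remark that the result tacitly needs $V$ lower semicontinuous (so that the epigraph is closed and the contingent cone equals the epigraph of the subderivative) is well taken; in the paper's application $V = F + \bigindic_X - \inf F$ is continuous on $X$, so this is harmless. The uniform bound $\norm{G(x)} \le K$ you impose is an extra hypothesis beyond the lemma as stated, but it matches the linear-growth condition Aubin and Cellina actually assume and is satisfied in the paper's use of the lemma.
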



Finally, we present a lemma on the
subgradients of $f$ using our set-valued integral definitions.
The proof is somewhat technical and not the main focus of this
paper, so we defer it to Appendix~\ref{sec:proof-frechet-subgradients-of-f}.
\begin{lemma}
  \label{lemma:frechet-subgradients-of-f}
  Let $f(\cdot; \statval)$ satisfy
  Assumptions~\ref{assumption:local-lipschitz}
  and~\ref{assumption:weak-convexity}.
  Then
  \begin{equation*}
    \partial f(x) = \E_P[\partial f(x; \statrv)]
  \end{equation*}
  and $\partial f(\cdot; \statval) : \R^d \toto \R^d$ and
  $\partial f(\cdot) : \R^d \toto \R^d$ are closed compact convex-valued
  and outer semicontinuous.
\end{lemma}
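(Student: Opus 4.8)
The plan is to establish the three structural claims in order: first outer semicontinuity and compact-convex-valuedness of the instantaneous map $\partial f(\cdot;\statval)$, then the same for the averaged map $\partial f(\cdot)$, and finally the identity $\partial f(x) = \E_P[\partial f(x;\statrv)]$. I would begin from the fact that, by Assumption~\ref{assumption:weak-convexity}, for each $x_0$ and each sufficiently small $\epsilon$ the function $y\mapsto f(y;\statval) + \frac{\lambda(\statval,x_0)}{2}\norm{y-x_0}^2$ is convex on $x_0 + \epsilon\ball$. Since adding a smooth quadratic shifts the Fr\'echet subdifferential by the gradient of that quadratic, $\partial f(x;\statval) = \partial\big(f(\cdot;\statval) + \frac{\lambda}{2}\norm{\cdot - x_0}^2\big)(x) - \lambda(x - x_0)$ on the interior of that ball. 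Because the subdifferential of a finite convex function on the interior of its domain is nonempty, compact (using the local Lipschitz bound $\lipf_\epsilon$ from Assumption~\ref{assumption:local-lipschitz}), convex, and o.s.c.\ by Lemma~\ref{lemma:subg-outer-semicontinuity}, the translate $\partial f(\cdot;\statval)$ inherits all of these properties locally around every point, hence globally on $\R^d$. (The Fr\'echet subdifferential is a local object, so piecing together local balls is legitimate.) This also shows $\partial f(x;\statval)$ is a singleton plus nothing pathological is needed; it is simply the convex-analytic subdifferential of a locally convex-up-to-quadratic function.

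Next, for the averaged map, I would first argue the \emph{set-valued integral} $\E_P[\partial f(x;\statrv)]$ is well-defined, compact, and convex: convexity follows since each $\partial f(x;\statval)$ is convex and integration of a convex-valued measurable map over a probability space yields a convex set (this is where we need the measurable-selection structure implicit in the definition of $\int G\,d\mu$); compactness follows from the uniform integrable bound $\E[\lipf_\epsilon(x;\statrv)^2] < \infty$, which dominates $\norm{\partial f(x;\statrv)}$ and lets us invoke a dominated-convergence / Lyapunov-type argument to get boundedness and closedness of the integral. Outer semicontinuity of $x\mapsto \E_P[\partial f(x;\statrv)]$ would come from combining pointwise o.s.c.\ of each $\partial f(\cdot;\statval)$ with the integrable envelope $\lipf_\epsilon$: if $x_n \to x$ and $g_n \in \E_P[\partial f(x_n;\statrv)]$ with $g_n \to g$, then $g_n = \int g_n(\statval)\,dP(\statval)$ with $g_n(\statval)\in\partial f(x_n;\statval)$ and $\norm{g_n(\statval)}\le \lipf_{\epsilon}(x;\statval)$ for large $n$; passing to a subsequence along which $g_n(\cdot)$ converges weakly (in $L^1$, say, or using a Koml\'os-type argument) to some $g_\infty$, o.s.c.\ of $\partial f(\cdot;\statval)$ forces $g_\infty(\statval)\in\partial f(x;\statval)$ a.e., so $g = \int g_\infty\,dP \in \E_P[\partial f(x;\statrv)]$.

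For the identity $\partial f(x) = \E_P[\partial f(x;\statrv)]$, the ``$\supset$'' direction is the easier half: any $g = \int g(\statval)\,dP(\statval)$ with $g(\statval)\in\partial f(x;\statval)$ satisfies, for each $\statval$, the weak-convexity subgradient inequality $f(y;\statval)\ge f(x;\statval) + \<g(\statval),y-x\> - \frac{\lambda(\statval,x)}{2}\norm{y-x}^2$ on a ball around $x$; integrating over $P$ (valid by the integrable envelope) gives $f(y)\ge f(x)+\<g,y-x\> - \frac{\bar\lambda}{2}\norm{y-x}^2$ with $\bar\lambda = \E[\lambda(\statrv,x)]$, which means $g\in\partial f(x)$ since the quadratic error is $o(\norm{y-x})$ in the Fr\'echet sense — wait, it is $O(\norm{y-x}^2)$, which is indeed $o(\norm{y-x})$, so $g$ is a Fr\'echet subgradient. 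The reverse direction ``$\subset$'' is where I expect the \textbf{main obstacle}: showing every Fr\'echet subgradient of the average decomposes as an integral of instantaneous subgradients. Here I would exploit that $f$ is itself $\bar\lambda$-weakly convex (noted in the text right before the lemma), so $f + \frac{\bar\lambda}{2}\norm{\cdot - x}^2$ is genuinely convex near $x$ and $\partial f(x)$ equals the convex subdifferential shifted; then I would apply the Ioffe--Levin / Strassen-type theorem on subdifferentials of integral functionals (the classical result that $\partial \int f(\cdot;\statval)\,dP = \int \partial f(\cdot;\statval)\,dP$ for convex normal integrands with an integrable envelope), adapting it from the globally-convex case to the locally-convex-modulo-quadratic case by the shift argument. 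Verifying the measurability (normal integrand) hypotheses and the integrability needed for that theorem — rather than any deep new idea — is the technical crux, which is presumably why the authors relegate it to the appendix.
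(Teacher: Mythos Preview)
Your proposal is correct in outline, but it takes a different and heavier route than the paper. The paper's proof works entirely through \emph{directional derivatives} rather than through set-valued integration and selection arguments. Concretely, the paper first shows by dominated convergence (using the weak-convexity shift and the integrable Lipschitz envelope $\lipf_\epsilon$) that the scalar limit
\[
  f'(x;v) \;=\; \int f'(x,\statval;v)\,dP(\statval)
\]
exists; since for weakly convex functions the Fr\'echet subdifferential is recovered from directional derivatives as $\partial f(x) = \{w : \<w,v\> \le f'(x;v)\ \forall v\}$, the identity $\partial f(x) = \E_P[\partial f(x;\statrv)]$ then follows by a short support-function argument (the paper cites Bertsekas~\cite{Bertsekas73}) with no need for a Strassen/Ioffe--Levin disintegration theorem. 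For outer semicontinuity of $\partial f$, the paper again stays scalar: it reduces o.s.c.\ to $\limsup_{y\to x} f'(y;v)\le f'(x;v)$, and obtains this from the corresponding known upper-semicontinuity of directional derivatives for the convex function $g(y)=f(y)+\frac{\lambda}{2}\norm{y-x}^2$ (\cite[Cor.~VI.6.2.5]{HiriartUrrutyLe93ab}), undoing the quadratic shift at the end.

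What each approach buys: your route is more ``structural'' and would generalize to settings where directional derivatives are less well-behaved, but it forces you through measurable-selection machinery and a Koml\'os/weak-$L^1$ compactness step whose pointwise-membership conclusion needs the $\epsilon$--$\delta$ form of o.s.c.\ plus convexity of $\partial f(x;\statval)$ to push through Ces\`aro averages (you gloss over this, though it can be made rigorous). The paper's directional-derivative route is shorter and uses only classical convex-analysis facts and dominated convergence, at the cost of being specific to the weakly-convex setting where directional derivatives and subdifferentials are in clean duality.
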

\noindent
Lemma~\ref{lemma:frechet-subgradients-of-f} shows that
$\partial f(x; \statval)$ is compact-valued and o.s.c., and
we thus define the shorthand notation for the
subgradients of $f + \regularizer$ as
\begin{equation}
  \label{eqn:actual-subgradient-sets}
  G(x; \statval) \defeq \partial f(x; \statval) + \partial \regularizer(x)
  ~~ \mbox{and} ~~
  G(x) \defeq
  \int_\statdomain \partial f(x; \statval) dP(\statval)
  + \partial \regularizer(x),
\end{equation}
both of which are o.s.c.\ in $x$ and compact-convex valued
because $\regularizer$ is convex.

\subsection{Functional convergence of the iteration path}
\label{sec:functional-convergence}

With our preliminaries in place, we now establish a general functional
convergence theorem (Theorem~\ref{theorem:functional-convergence-general})
that applies to stochastic approximation-like algorithms that asymptotically
approximate differential inclusions. By showing the generic
algorithm~\eqref{eqn:model-based-minimization} has the form our theorem
requires, we conclude that each of
examples~\ref{example:sgd}--\ref{example:guarded-stochastic-prox-point}
converge to the appropriate
differential inclusion (Sec.~\ref{sec:stochastic-approximation-limit}).

\subsubsection{A general functional convergence theorem}
\label{sec:general-functional-convergence}

Let $\{g_k\}_{k \in \N}$ be a collection of set-valued mappings
$g_k : \R^d \toto \R^d$, $\{\stepsize_k\}_{k \in \N}$
be a sequence of positive stepsizes, 
$\{\noise_k\}_{k=1}^\infty$ be an
arbitrary $\R^d$-valued sequence (the noise sequence).
Consider the following iteration, which begins
from the initial value $x_0 \in \R^d$:
\begin{equation}
  \label{eqn:stochastic-approximation}
  x_{k+1} = x_k + \stepsize_k [y_k + \noise_{k+1}],
  ~~ \mbox{where}~ y_k \in g_k(x_k)
  ~~ \mbox{for~} k \ge 0.
\end{equation}
For notational convenience, define the
``times'' $t_m = \sum_{k=1}^m \stepsize_k$ as the partial
stepsize sums, and let $x(\cdot)$ be the linear 
interpolation of the iterates $x_k$:
\begin{equation}
  \label{eqn:interpolation}
  x(t) \defeq x_k + \frac{t-t_k}{t_{k+1} - t_k} (x_{k+1} - x_k)
  ~~\text{and}~~y(t) = y_k ~~\text{for}~~t\in [t_k, t_{k+1}).
\end{equation}
This path satisfies $\dot{x}(t) = y(t)$ for almost all $t$ and is
absolutely continuous on compact. For $t \in \R_+$, define the
time-shifted process $x^t(\cdot) = x(t + \cdot)$. We have the following
convergence theorem for the interpolation~\eqref{eqn:interpolation}
of the iterative process~\eqref{eqn:stochastic-approximation}, where we
recall that we metrize $\continuous(\R_+, \R^d)$ with $\fndist(f, g) =
\sum_{t = 1}^\infty 2^{-t} \sup_{\tau \in [0, t]} \norm{f(\tau) - g(\tau)}
\wedge 1$.
\begin{theorem}
  \label{theorem:functional-convergence-general}
  Let the following conditions hold: 
  \begin{enumerate}[label=(\roman*)]
  \item \label{item:bounded-iterates} The iterates
    are bounded, i.e.\
    $\sup_k \norm{x_k} < \infty$ and $\sup_k \norm{y_k} < \infty$.
  \item \label{item:summable-stepsizes}
    The stepsizes satisfy
    $\sum_{k=1}^\infty \stepsize_k = \infty$ and
    $\sum_{k=1}^\infty \stepsize_k^2 < \infty$. 
  \item \label{item:noise-summation}
    The weighted noise sequence converges:
    $\lim_n \sum_{k=1}^n \stepsize_k \noise_k = v$
    for some $v \in \R^d$.
  \item \label{item:convergence-set-mapping}
    There exists a closed-valued  $H : \R^d \toto \R^d$ such that
    for all $\{z_k\} \subset \R^d$ satisfying $\lim_k z_k = z$
    and all increasing subsequences $\{n_k\}_{k \in \N} \subset \N$,
    we have
    \begin{equation*}
      \lim_{n \to \infty} \dist \left(\frac{1}{n}
        \sum_{k=1}^n g_{n_k}(z_k), H(z)\right) = 0.
    \end{equation*}
  \end{enumerate}
  Then for any sequence $\left\{\tau_k\right\}_{k=1}^\infty \subset \R_+$,
  the sequence of functions $\left\{x^{\tau_k}(\cdot)\right\}$ is relatively
  compact in $\cont(\R_+, \R^d)$. If $\tau_k \to \infty$,
  all limit points of $\left\{x^{\tau_k}(\cdot)\right\}$ are in $\cont(\R_+,
  \R^d)$ and there exists $y : \R_+ \to \R^d$ satisfying
  $y(t) \in H(x(t))$ for all $t \in \R_+$ where
  \begin{equation*}
    \bar{x}(t) = \bar{x}(0) + \int_{0}^t y(\tau) d\tau
    ~~ \mbox{for~all~} t \in \R_+.
  \end{equation*}
\end{theorem}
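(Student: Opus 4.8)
The plan is to follow the classical ``ODE method'' for stochastic approximation~\cite{Ljung77, KushnerYi03, Borkar08}, adapted to set-valued right-hand sides: first show the shifted interpolations $\{x^{\tau_k}(\cdot)\}$ form a relatively compact family in $\cont(\R_+, \R^d)$, then show every limit point (along $\tau_k \to \infty$) is an absolutely continuous $\bar x$ with $\dot{\bar x}(t) \in H(\bar x(t))$ for a.e.\ $t$, so that $\bar x(t) = \bar x(0) + \int_0^t \dot{\bar x}$ and the choice $y(t) = \dot{\bar x}(t)$ (any element of the closed set $H(\bar x(t))$ on the null set where $\bar x$ is not differentiable) finishes the proof. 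The first thing I would do is dispose of the noise: writing $x(s') - x(s) = \int_s^{s'} y(\tau)\, d\tau + (N(s') - N(s))$, where $N(\cdot)$ is the piecewise-linear interpolation through the nodes $t_m = \sum_{k \le m}\stepsize_k$ of the partial noise sums $\sum_{k \le m}\stepsize_k\noise_{k+1}$ (the shift from $\noise_k$ to $\noise_{k+1}$ in hypothesis~\ref{item:noise-summation} is immaterial since $\stepsize_k\noise_k \to 0$), hypothesis~\ref{item:noise-summation} makes these sums Cauchy, so $\sup_{\tau \le s \le s'}\norm{N(s') - N(s)} \to 0$ as $\tau \to \infty$; combined with $\sum_k \stepsize_k = \infty$ from~\ref{item:summable-stepsizes} (so $t_m \to \infty$ and the interpolation and its shifts are defined on all of $\R_+$), this says the noise contributes no asymptotic oscillation.

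For relative compactness: hypothesis~\ref{item:bounded-iterates} puts $x(\cdot)$ in a fixed compact set, giving pointwise boundedness, and combining the bound $\norm{\int_s^{s'} y} \le (\sup_k\norm{y_k})|s' - s|$ with the noise estimate gives, for each $\epsilon > 0$, a $T_\epsilon$ with $\norm{x^\tau(s') - x^\tau(s)} \le (\sup_k\norm{y_k})|s' - s| + \epsilon$ for all $\tau \ge T_\epsilon$. Thus the tail of $\{x^{\tau_k}\}$ has a uniform modulus of continuity on compacts, and Arzel\`a--Ascoli extracts a subsequence converging uniformly on compacts to a limit that is continuous and, on letting $\epsilon \downarrow 0$, $(\sup_k\norm{y_k})$-Lipschitz---hence absolutely continuous; for bounded $\{\tau_k\}$ I would instead extract $\tau_{k_j} \to \tau^\star$ and use uniform continuity of $x(\cdot)$ on compacts.

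For the identification, fix a limit point $\bar x = \lim_j x^{\tau_{k_j}}$ with $\tau_{k_j}\to\infty$ and a differentiability point $t$ of $\bar x$. For $h > 0$, the noise estimate gives $\frac1h\big(x^{\tau_{k_j}}(t+h) - x^{\tau_{k_j}}(t)\big) = \frac1h\int_t^{t+h}y^{\tau_{k_j}}(\tau)\, d\tau + o_j(1)$, and the integral-average on the right is a convex combination $\sum_{\ell\in W_j} w_\ell^{(j)} y_\ell$ of the selections $y_\ell \in g_\ell(x_\ell)$ over the index window $W_j$ matching the time interval $[\tau_{k_j}+t, \tau_{k_j}+t+h)$, with weights nonnegative summing to $1$, with $|W_j| \to \infty$ by~\ref{item:summable-stepsizes} ($\stepsize_k \to 0$), and with $\sup_{\ell\in W_j}\norm{x_\ell - \bar x(t)} \le (\sup_k\norm{y_k})h + o_j(1)$ from the equicontinuity modulus. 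The crucial step is then a diagonal choice $h = h_j \downarrow 0$ (and, if needed, a sub-subsequence of $\tau_{k_j}$) slow enough that $\sup_{\ell\in W_j}\norm{x_\ell - \bar x(t)} \to 0$: enumerating the (eventually disjoint) windows $\bigcup_j W_j$ as an increasing sequence $\{n_\ell\}$ with $z_\ell := x_{n_\ell} \to \bar x(t)$, hypothesis~\ref{item:convergence-set-mapping}---applied to $\{n_\ell\}, \{z_\ell\}$ and to each of its sub-subsequences, together with the elementary fact that coincident subsequential Ces\`aro limits force convergence---gives $\sup_{\ell\in W_j}\dist\big(g_\ell(x_\ell), H(\bar x(t))\big) \to 0$, so the convex combination $\sum_{\ell\in W_j}w_\ell^{(j)}y_\ell$ lies in a shrinking neighborhood of $H(\bar x(t))$. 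Since $\frac1{h_j}\big(x^{\tau_{k_j}}(t+h_j) - x^{\tau_{k_j}}(t)\big) \to \frac1{h_j}\big(\bar x(t+h_j) - \bar x(t)\big) \to \dot{\bar x}(t)$ (the first limit in $j$, the second from differentiability), and $H(\bar x(t))$ is closed, we get $\dot{\bar x}(t) \in H(\bar x(t))$ for a.e.\ $t$, as required.

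The hard part will be the identification step, and specifically the coordination of the three scales---the shift $\tau_{k_j}\to\infty$, the window length $h\to0$, and the step sizes $\stepsize_\ell\to0$ that control $|W_j|$ and the spatial width of $x(\cdot)$ over a window---so that the iterates inside a window genuinely converge to $\bar x(t)$ and hypothesis~\ref{item:convergence-set-mapping} becomes applicable; I also expect to have to nail down the routine-but-fiddly points that subsequential Ces\`aro convergence of the set-averages forces $\dist(g_\ell(x_\ell), H(\bar x(t))) \to 0$ uniformly over each window, that the boundary-window weight corrections (order $\max_{\ell\in W_j}\stepsize_\ell$) and the $\noise_k$ versus $\noise_{k+1}$ bookkeeping wash out, and that $y$ can be chosen measurably on the exceptional null set.
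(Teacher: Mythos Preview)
Your overall strategy---Arzel\`a--Ascoli for relative compactness, then identification of the limiting inclusion---matches the paper's, and your noise-disposal and equicontinuity arguments are essentially what the paper does (it packages them as an asymptotic-equivalence lemma between $x(\cdot)$ and the noise-free integral path). The identification step, however, takes a genuinely different route from the paper, and as written it has a gap.

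The paper does \emph{not} work pointwise at differentiability points of $\bar x$. Instead it views $\{y^{\tau_{k_j}}(\cdot)\}_j$ as a bounded set in $L^2([0,T])$, extracts a weak limit $\bar y$ by Banach--Alaoglu, and then invokes the \emph{Banach--Saks theorem} to pass to a further subsequence along which the Ces\`aro means $\frac{1}{N}\sum_{j=1}^N y^{\tau_{k_j}}(\cdot)$ converge to $\bar y$ strongly in $L^2$, hence almost everywhere. At almost every $\tau$ this Ces\`aro mean is a \emph{uniform} average of selections $y^{\tau_{k_j}}(\tau)\in g_{l_j(\tau)}(x(t_{l_j(\tau)}))$ with $x(t_{l_j(\tau)})\to\bar x(\tau)$, which is precisely the form hypothesis~\ref{item:convergence-set-mapping} controls; closedness of $H(\bar x(\tau))$ then gives $\bar y(\tau)\in H(\bar x(\tau))$ directly. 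No convexity of $H$ is used.

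Your argument instead produces \emph{stepsize-weighted} averages $\sum_{\ell\in W_j} w_\ell^{(j)} y_\ell$ and reaches $H(\bar x(t))$ through the intermediate claim that each individual $y_\ell$ (you write $g_\ell(x_\ell)$, but it is the specific selection you need) is eventually close to $H(\bar x(t))$. That intermediate claim is correct and does follow from~\ref{item:convergence-set-mapping} along sub-subsequences together with boundedness of $(y_\ell)$, exactly as you sketch. The next sentence, however---``so the convex combination $\sum_{\ell\in W_j}w_\ell^{(j)}y_\ell$ lies in a shrinking neighborhood of $H(\bar x(t))$''---does not follow from closedness alone: a convex combination of points each near a closed non-convex set need not be near that set. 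The paper's Banach--Saks detour sidesteps this because the averages it produces are already in the Ces\`aro form that~\ref{item:convergence-set-mapping} bounds. Your route can in fact be repaired without adding a convexity hypothesis---the ``for all subsequences'' clause of~\ref{item:convergence-set-mapping} is strong enough to force the closed convex hull of the limit set of $(y_{n_\ell})$ into $H(\bar x(t))$, by interleaving subsequences with prescribed asymptotic densities---but that is an additional argument you have not supplied, and it sits on top of the already delicate three-scale diagonalization of $\tau_{k_j}$, $h_j$, and $\stepsize_\ell$. The functional-analytic approach buys a clean match with the hypothesis and no reliance on convexity; your approach is more elementary but leaves this step to be filled in.
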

The theorem is a generalization of Theorem 5.2 of~\citet{Borkar08}, where
the set-valued mappings $g_k$ are identical for all $k$; our
proof techniques are similar. For completeness, we provide a proof in 
Appendix~\ref{sec:proof-thm-functional-convergence}.

\subsubsection{Differential inclusion for
  stochastic model-based methods}
\label{sec:stochastic-approximation-limit}

With Theorem~\ref{theorem:functional-convergence-general} in place,
we can now show how the update
scheme~\eqref{eqn:model-based-minimization} is representable by the general
stochastic approximation~\eqref{eqn:stochastic-approximation}.
To do so, we must verify that any method satisfying
Conditions~\ref{item:convex-model}--\ref{item:upper-approximation}
satisfies the four conditions of
Theorem~\ref{theorem:functional-convergence-general}.  With this in mind, we
introduce a bit of new notation before proceeding. In
analogy to the gradient mapping from convex~\cite{Nesterov04}
and composite
optimization~\cite{DrusvyatskiyIoLe16}, we define a stochastic
gradient mapping $\gradmap$ and consider its limits. For fixed $x$ we define
\begin{equation}
  \label{eqn:gradmap}
  x_\stepsize^+ (\statval) \defeq \argmin_{y \in X}
  \left\{ f_x(y; \statval) + \regularizer(y)
  + \frac{1}{2 \stepsize} \norm{y - x}^2 \right\}
  ~~ \mbox{and} ~~
  \gradmap_\stepsize(x; \statval) \defeq
  \frac{1}{\stepsize} (x - x_\stepsize^+(\statval)),
\end{equation}
For any model $f_x(\cdot; \statval)$ we consider,
the update is well-behaved: it is measurable in
$\statval$~\cite[Lemma 1]{Rockafellar69}, and it is bounded,
as the next lemma shows.
\begin{lemma}
  \label{lemma:gradmap-bound}
  The update~\eqref{eqn:gradmap} guarantees that
  $\norm{\gradmap_\stepsize(x; \statval)} \le \norm{G(x; \statval)}$, where
  $G(x; \statval)$ is the subgradient~\eqref{eqn:actual-subgradient-sets}.
\end{lemma}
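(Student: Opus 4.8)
The plan is to exploit the optimality conditions of the proximal subproblem defining $x^+_\stepsize(\statval)$ together with the model conditions~\ref{item:convex-model}--\ref{item:subgrad-model}. Write $x^+ = x^+_\stepsize(\statval)$ for brevity. Since $x^+$ minimizes the convex function $y \mapsto f_x(y;\statval) + \regularizer(y) + \frac{1}{2\stepsize}\norm{y - x}^2$ over $\xdomain$, the first-order optimality condition gives
\begin{equation*}
  0 \in \partial_y f_x(x^+;\statval) + \partial\regularizer(x^+)
  + \frac{1}{\stepsize}(x^+ - x) + \normalcone_\xdomain(x^+),
\end{equation*}
so that $\gradmap_\stepsize(x;\statval) = \frac{1}{\stepsize}(x - x^+) \in \partial_y f_x(x^+;\statval) + \partial\regularizer(x^+) + \normalcone_\xdomain(x^+)$. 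Pick the corresponding elements $g^+ \in \partial_y f_x(x^+;\statval)$, $v^+ \in \partial\regularizer(x^+)$, $w^+ \in \normalcone_\xdomain(x^+)$ realizing this containment, so $\gradmap_\stepsize(x;\statval) = g^+ + v^+ + w^+$.

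The key step is to bound $\norm{\gradmap_\stepsize(x;\statval)}^2$ by pairing it against $x - x^+$ and using monotonicity/convexity to discard the terms that only help. We have $\stepsize\norm{\gradmap_\stepsize(x;\statval)}^2 = \<\gradmap_\stepsize(x;\statval), x - x^+\> = \<g^+ + v^+ + w^+, x - x^+\>$. Because $x \in \xdomain$ and $w^+ \in \normalcone_\xdomain(x^+)$, we have $\<w^+, x - x^+\> \le 0$; because $\regularizer$ is convex and $v^+ \in \partial\regularizer(x^+)$, we have $\<v^+, x - x^+\> \le \regularizer(x) - \regularizer(x^+)$; and because $f_x(\cdot;\statval)$ is convex with $g^+ \in \partial_y f_x(x^+;\statval)$, we have $\<g^+, x - x^+\> \le f_x(x;\statval) - f_x(x^+;\statval)$. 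On the other hand, fixing any $\subgrad \in \partial_y f_x(y;\statval)\big|_{y=x}$, $v \in \partial\regularizer(x)$ and using convexity of $f_x(\cdot;\statval)$ and $\regularizer$ in the reverse direction gives $f_x(x^+;\statval) - f_x(x;\statval) \ge \<\subgrad, x^+ - x\>$ and $\regularizer(x^+) - \regularizer(x) \ge \<v, x^+ - x\>$. Adding, we obtain
\begin{equation*}
  \stepsize\norm{\gradmap_\stepsize(x;\statval)}^2
  \le \<\subgrad + v, x - x^+\> \le \norm{\subgrad + v}\,\norm{x - x^+}
  = \stepsize\,\norm{\subgrad + v}\,\norm{\gradmap_\stepsize(x;\statval)},
\end{equation*}
whence $\norm{\gradmap_\stepsize(x;\statval)} \le \norm{\subgrad + v}$. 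By condition~\ref{item:subgrad-model}, $\subgrad \in \partial f(x;\statval)$, so $\subgrad + v \in \partial f(x;\statval) + \partial\regularizer(x) = G(x;\statval)$; minimizing over the choice of $\subgrad$ and $v$ gives $\norm{\gradmap_\stepsize(x;\statval)} \le \dist(0, G(x;\statval)) \le \norm{G(x;\statval)}$, using the convention $\norm{G(x;\statval)} = \sup_{g \in G(x;\statval)}\norm{g}$ (note $0 \le \dist(0,G(x;\statval)) \le \norm{G(x;\statval)}$ whenever $G(x;\statval)$ is nonempty, which holds by Lemma~\ref{lemma:frechet-subgradients-of-f} and convexity of $\regularizer$).

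The main obstacle I anticipate is purely bookkeeping around the subdifferential calculus: one must ensure the sum rule $\partial_y[f_x(y;\statval) + \regularizer(y) + \frac{1}{2\stepsize}\norm{y-x}^2 + \bigindic_\xdomain(y)] = \partial_y f_x(y;\statval) + \partial\regularizer(y) + \frac{1}{\stepsize}(y-x) + \normalcone_\xdomain(y)$ is valid at the minimizer. This holds because all summands are convex and we can invoke a qualification condition (e.g.\ $\regularizer$ and $\bigindic_\xdomain$ are closed convex and $f_x(\cdot;\statval)$ is finite and continuous on the relevant domain, or in Example~\ref{example:guarded-stochastic-prox-point} the extra indicator $\bigindic_{x+\epsilon\ball}$ is handled identically with its own normal cone term, which again pairs nonpositively against $x - x^+$ since $x \in x + \epsilon\ball$). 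Everything else is a one-line convexity inequality, so no genuine difficulty remains once the optimality condition is written down correctly.
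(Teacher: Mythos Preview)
Your proof is correct and follows essentially the same approach as the paper: write the optimality condition for $x^+$, use convexity/monotonicity to replace the subgradients at $x^+$ by subgradients at $x$, and finish with Cauchy--Schwarz. The only cosmetic difference is that the paper uses the variational-inequality form of optimality and invokes monotonicity of the subgradient operators directly, whereas you write the inclusion form with an explicit normal-cone term and pass through the subgradient (function-value) inequalities; these are equivalent formulations of the same step. Your version in fact yields the slightly sharper bound $\norm{\gradmap_\stepsize(x;\statval)} \le \dist(0, G(x;\statval))$ before relaxing to $\norm{G(x;\statval)}$.
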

\begin{proof}
  For shorthand, write $x^+ = x^+_\stepsize(\statval)$ and let $g \in
  \partial f_x(x; \statval) \subset \partial f(x;\statval)$.  By the
  definition of the optimality conditions for $x^+$, there exists a vector
  $g^+$ that $g^+ \in \partial f_x(x^+; \statval)$ and another vector $v^+ \in
  \partial \regularizer(x^+)$ such that
  \begin{equation*}
    \<g^+ + \frac{1}{\stepsize}(x^+ - x) + v^+, y - x^+\> \ge 0
    ~~ \mbox{for~all~} y \in X.
  \end{equation*}
  Rearranging, we substitute $y = x$ to obtain
  \begin{equation*}
    \<g^+, x^+ - x\> + \frac{1}{\stepsize} \norm{x - x^+}^2
    + \<v^+, x^+ - x\> \le 0.
  \end{equation*}
  The subgradient mapping is monotone for
  $f_x(\cdot;\statval)$ and $\regularizer$, so
  $\<g^+, x - x^+\> \ge \<g, x - x^+\>$
  and $\<v^+, x - x^+\> \ge \<\partial \regularizer(x), x - x^+\>$.
  Thus
  \begin{equation*}
    \<g, x^+ - x\> + \frac{1}{\stepsize} \norm{x - x^+}^2
    + \<v, x^+ - x\> \le 0
  \end{equation*}
  for all $v \in \partial \regularizer(x)$.
  Cauchy-Schwarz implies
  $\norm{g + v} \norm{x^+ - x} \ge \frac{1}{\stepsize} \norm{x - x^+}^2$,
  which implies our desired result.
\end{proof}

To define the population counterpart of the
gradient mapping $\gradmap_\stepsize$, we require a result
showing that the gradient mapping is locally bounded and integrable.
To that end, for $x \in X$ and $\epsilon > 0$, define the Lipschitz constants
\begin{equation*}
  L_\epsilon(x; \statval) \defeq \sup_{x' \in X, \norm{x' - x} \leq \epsilon} 
  \norm{G(x'; \statval)}
  ~~ \mbox{and} ~~
  L_\epsilon(x) \defeq \E_P[L_\epsilon(x; \statrv)^2]^\half.
\end{equation*}
The following lemma shows these are not pathological
(see Appendix~\ref{sec:proof-property-of-Lipschitz-constant} for a proof).
\begin{lemma}
  \label{lemma:property-of-Lipschitz-constant}
  Let Assumptions~\ref{assumption:local-lipschitz}
  and~\ref{assumption:weak-convexity} hold.  Then $x \mapsto L_\epsilon(x;
  \statval)$ and $x \mapsto L_\epsilon(x)$ are upper semicontinuous on $X$
  and $L_\epsilon(x) < \infty$ for all $x \in X$.
\end{lemma}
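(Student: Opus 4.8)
The plan is to deduce all three assertions from two structural facts: the subgradient mapping $G(\cdot;\statval)$ is compact-valued and outer semicontinuous on $\xdomain$ (Lemma~\ref{lemma:frechet-subgradients-of-f} and the discussion after it), and it is locally bounded on $\xdomain$ off a $P$-null set. For the latter I would note that $\regularizer$ is convex and finite near $\xdomain$, so $\partial\regularizer$ is locally bounded on the compact set $\xdomain$ and $c_\regularizer \defeq \sup\{\norm{v} : x'\in\xdomain,\ v\in\partial\regularizer(x')\} < \infty$, whence $\norm{G(x';\statval)} \le \lipf_{\epsilon_0}(x;\statval) + c_\regularizer$ whenever $x'\in\xdomain$ and $\norm{x'-x}\le\epsilon_0$. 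Since $\E[\lipf_{\epsilon_0}(x;\statrv)^2]<\infty$ for every $x$ (Assumption~\ref{assumption:local-lipschitz}), ranging $x$ over a countable dense subset of $\xdomain$ produces a single null set $N$ off of which $G(\cdot;\statval)$ is locally bounded on all of $\xdomain$. The same bound yields $L_\epsilon(x;\statval)\le\lipf_\epsilon(x;\statval)+c_\regularizer$, hence $L_\epsilon(x)^2=\E[L_\epsilon(x;\statrv)^2]\le 2\E[\lipf_\epsilon(x;\statrv)^2]+2c_\regularizer^2<\infty$ by Assumption~\ref{assumption:local-lipschitz}, which is the finiteness claim. (Throughout I take $0<\epsilon<\epsilon_0$, which is all the sequel uses.)

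For upper semicontinuity of $x\mapsto L_\epsilon(x;\statval)$ with $\statval\notin N$, I would fix $x_n\to x$ in $\xdomain$, pass to a subsequence with $L_\epsilon(x_n;\statval)\to\ell\defeq\limsup_n L_\epsilon(x_n;\statval)$, and select near-maximizers $x_n'\in\xdomain$, $\norm{x_n'-x_n}\le\epsilon$, and $g_n\in G(x_n';\statval)$ with $\norm{g_n}\ge L_\epsilon(x_n;\statval)-1/n$. For large $n$ the points $x_n'$ lie in the compact set $\xdomain\cap(x+\epsilon_0\ball)$, on which $G(\cdot;\statval)$ is bounded, so $\{(x_n',g_n)\}$ is bounded; along a further subsequence $x_n'\to x'$ and $g_n\to g$. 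Then $x'\in\xdomain$, $\norm{x'-x}\le\epsilon$, and outer semicontinuity gives $g\in G(x';\statval)$, so $\norm{g}\le\norm{G(x';\statval)}\le L_\epsilon(x;\statval)$; since $\norm{g_n}\to\norm{g}$ while $\norm{g_n}\ge L_\epsilon(x_n;\statval)-1/n\to\ell$, we conclude $\ell\le L_\epsilon(x;\statval)$. (For $\statval\in N$ the identical argument applies wherever $L_\epsilon(\cdot;\statval)$ is finite, and upper semicontinuity is vacuous at points where it equals $+\infty$.)

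For upper semicontinuity of $x\mapsto L_\epsilon(x)$ I would upgrade the pointwise statement by a dominated-convergence argument. Writing $L_\epsilon(x)^2=\E[L_\epsilon(x;\statrv)^2]$ and letting $x_n\to x$, for $\norm{x_n-x}\le\epsilon_0-\epsilon$ we have $\xdomain\cap(x_n+\epsilon\ball)\subset\xdomain\cap(x+\epsilon_0\ball)$, hence $L_\epsilon(x_n;\statval)\le L_{\epsilon_0}(x;\statval)$, with $\E[L_{\epsilon_0}(x;\statrv)^2]<\infty$ by the preliminary bound. Thus the nonnegative sequence $L_{\epsilon_0}(x;\statrv)^2-L_\epsilon(x_n;\statrv)^2$ is eventually well defined, and Fatou's lemma together with the pointwise inequality $\limsup_n L_\epsilon(x_n;\statval)^2\le L_\epsilon(x;\statval)^2$ from the previous step gives $\limsup_n\E[L_\epsilon(x_n;\statrv)^2]\le\E[\limsup_n L_\epsilon(x_n;\statrv)^2]\le\E[L_\epsilon(x;\statrv)^2]=L_\epsilon(x)^2$; taking square roots finishes the proof.

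I expect the last step — interchanging $\limsup$ with the expectation — to be the only genuine obstacle, and it is precisely where the uniformity in Assumption~\ref{assumption:local-lipschitz} (integrability of $\lipf_\epsilon(x;\statrv)^2$ for \emph{every} admissible $\epsilon$ and \emph{every} $x$) enters: it supplies the integrable envelope $L_{\epsilon_0}(x;\statrv)^2$ valid on a whole neighborhood of $x$. The fixed-$\statval$ compactness argument is routine once the local boundedness of $G(\cdot;\statval)$ has been recorded.
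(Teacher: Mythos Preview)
Your argument is correct and matches the paper's: outer semicontinuity of $G(\cdot;\statval)$ plus a compactness extraction gives upper semicontinuity of $L_\epsilon(\cdot;\statval)$, and then a reverse-Fatou step with an integrable envelope handles $L_\epsilon(\cdot)$ (your envelope $L_{\epsilon_0}(x;\statrv)$ is in fact cleaner than the paper's). One slip to fix: you refer to ``the compact set $\xdomain$'' and take a global $c_\regularizer<\infty$, but $\xdomain$ is not assumed compact in this lemma; since every use you make of the bound is on the compact set $\xdomain\cap(x+\epsilon_0\ball)$, replace $c_\regularizer$ by the local quantity $\sup\{\norm{v}:x'\in\xdomain,\ \norm{x'-x}\le\epsilon_0,\ v\in\partial\regularizer(x')\}$, which is finite because $\regularizer:\R^d\to\R$ is convex and hence locally Lipschitz, and the proof goes through unchanged.
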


As a consequence of this lemma
and Lemma~\ref{lemma:gradmap-bound}, $\gradmap_\stepsize(x;\statrv)$
is locally bounded by $L_\epsilon(x;\statval)$ and we may
define the mean subgradient mapping
\begin{equation*}
  \meangradmap_\stepsize(x)
  \defeq \E_P\left[\gradmap_\stepsize(x; \statrv)\right]
  = \int_{\statdomain} \gradmap_\stepsize(x; \statval) dP(\statval). 
\end{equation*}
Moreover, any update of the
form~\eqref{eqn:model-based-minimization}
(e.g.\ Examples~\ref{example:sgd}--\ref{example:guarded-stochastic-prox-point})
has representation
\begin{align}
  \label{eqn:stochastic-methods-as-approximation}
  x_{k+1} = x_k - \stepsize_k \gradmap_{\stepsize_k}(x_k; \statrv_k)
  = x_k - \stepsize_k \wb{\gradmap}_{\stepsize_k}(x_k) 
  - \stepsize_k \error_{\stepsize_k}(x_k; \statrv_k), 
\end{align}
where the noise vector
$\error_\stepsize(x; \statval) \defeq \gradmap_\stepsize(x; \statval) -
\wb{\gradmap}_\stepsize(x)$. Defining the filtration of
$\sigma$-fields
$\mc{F}_k \defeq \sigma(x_0, \statrv_1, \ldots, \statrv_{k-1})$, we
have $x_k \in \mc{F}_k$ and that $\error$ is a
square-integrable martingale difference sequence adapted to $\mc{F}_k$.
Indeed, for $\stepsize$ and $\epsilon > 0$ we have
\begin{equation*}
  \norm{\gradmap_\stepsize(x; \statval)}
  \le L_\epsilon(x; \statval)
  ~~ \mbox{and} ~~
  \norm{\meangradmap_\stepsize(x)} \le L_\epsilon(x)
\end{equation*}
by Lemma~\ref{lemma:gradmap-bound} and the definition
of the Lipschitz constant, and for any $x$ and $\stepsize > 0$, 
\begin{equation}
  \label{eqn:noise-upper-bound}
  \E_P\left[\norm{\error_\stepsize(x; \statrv)}^2 \right]
  \le \E_P\left[\norm{\gradmap_\stepsize(x; \statrv)}^2\right]
  \leq \E\left[L_\epsilon^2(x; \statrv)\right]
  = L_{\epsilon}(x)^2,
\end{equation}
because $\E[\gradmap_\stepsize] = \meangradmap_\stepsize$.
In the context of our iterative procedures, for
any $\stepsize > 0$, 
\begin{equation*}
  \E [\error_\stepsize(x_k; \statrv_k) \mid \mc{F}_k] = 0
  ~~ \mbox{and} ~~
  \E[\norm{\error_\stepsize(x_k; \statrv_k)}^2 \mid \mc{F}_k]
  \le L_\epsilon(x_k)^2.
\end{equation*}
The
(random) progress of each iterate of the algorithm $\gradmap$ is now the sum
of a mean progress $\meangradmap$ and a random noise perturbation $\error$
with (conditional) mean $0$ and bounded second moments. The update
form~\eqref{eqn:stochastic-methods-as-approximation} shows that all of our
examples---stochastic proximal point, stochastic prox-linear, and the
stochastic gradient method---have the
form~\eqref{eqn:stochastic-approximation} necessary for application of
Theorem~\ref{theorem:functional-convergence-general}.

\paragraph{Functional convergence for the stochastic updates}
Now that we have the
representation~\eqref{eqn:stochastic-methods-as-approximation}, it remains to
verify that the mean gradient mapping $\meangradmap$ and errors $\error$
satisfy the conditions necessary for application of
Theorem~\ref{theorem:functional-convergence-general}.  That is, we verify
\ref{item:bounded-iterates} bounded iterates,
\ref{item:summable-stepsizes} non-summable but square-summable stepsizes,
\ref{item:noise-summation} convergence of the weighted error sequence, and
\ref{item:convergence-set-mapping} the distance condition in the theorem.
Condition~\ref{item:summable-stepsizes} is trivial.  To address
condition~\ref{item:bounded-iterates}, we temporarily make the following
assumption, noting that the compactness of $X$ is sufficient for it
to hold (we give other sufficient conditions in Section~\ref{sec:as-convergence},
showing that it is not too onerous).
\begin{assumption}
  \label{assumption:boundedness-of-iterates}
  With probability 1, the iterates~\eqref{eqn:model-based-minimization} are
  bounded,
  \begin{equation*}
    \sup_k \norm{x_k} < \infty. 
  \end{equation*}
\end{assumption}
\noindent
A number of conditions, such as almost supermartingale convergence
theorems~\cite{RobbinsSi71}, are sufficient to guarantee
Assumption~\ref{assumption:boundedness-of-iterates}.
Whenever Assumption~\ref{assumption:boundedness-of-iterates}
holds, we have
\begin{equation*}
  \sup_k \sup_{\stepsize > 0} \norm{\meangradmap_{\stepsize}(x_k)}
  \le \sup_k L_\epsilon(x_k) < \infty,
\end{equation*}
by Lemmas~\ref{lemma:gradmap-bound}
and~\ref{lemma:property-of-Lipschitz-constant},
because the supremum of an upper
semicontinuous function on a compact set is finite. That is,
condition~\ref{item:bounded-iterates} of
Theorem~\ref{theorem:functional-convergence-general} on the boundedness of
$x_k$ and $y_k$ holds.

The error sequences $\error_{\stepsize_k}$ are also well-behaved for the
model-based updates~\eqref{eqn:model-based-minimization}. That is,
condition~\ref{item:noise-summation} of
Theorem~\ref{theorem:functional-convergence-general} is satisfied:
\begin{lemma}
  \label{lemma:summability-of-noise}
  Let Assumptions~\ref{assumption:local-lipschitz},
  \ref{assumption:weak-convexity},
  and~\ref{assumption:boundedness-of-iterates} hold. Then
  with probability 1, the limit $\lim_{n \to \infty}
  \sum_{k=1}^n \stepsize_k \error_{\stepsize_k}(x_k; \statrv_k)$
  exists
  and is finite.
\end{lemma}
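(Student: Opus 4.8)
The plan is to recognize the partial sums
\[
  M_n \defeq \sum_{k=1}^n \stepsize_k\,\error_{\stepsize_k}(x_k;\statrv_k)
\]
as a square-integrable martingale and to apply a martingale convergence theorem. From the discussion preceding the lemma, $\error_{\stepsize_k}(x_k;\statrv_k)$ is a martingale difference: it is $\sigma(x_0,\statrv_1,\dots,\statrv_k)$-measurable with $\E[\error_{\stepsize_k}(x_k;\statrv_k)\mid\mc{F}_k]=0$, so $\{M_n\}$ is a martingale with respect to the shifted filtration $\{\mc{F}_{k+1}\}$. First I would record the conditional second moments of its increments: since $x_k$ is $\mc{F}_k$-measurable, combining Lemma~\ref{lemma:gradmap-bound} with the bound~\eqref{eqn:noise-upper-bound} gives
\[
  \E\big[\norm{M_{k}-M_{k-1}}^2 \,\big|\, \mc{F}_{k}\big]
  = \stepsize_{k}^2\,\E\big[\norm{\error_{\stepsize_{k}}(x_{k};\statrv_{k})}^2\,\big|\,\mc{F}_{k}\big]
  \le \stepsize_{k}^2\, L_\epsilon(x_{k})^2 .
\]

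Next I would bound the sum of these conditional variances. Under Assumption~\ref{assumption:boundedness-of-iterates} the random radius $R\defeq\sup_k\norm{x_k}$ is finite almost surely, and on the compact set $\xdomain\cap R\ball$ the function $x\mapsto L_\epsilon(x)$ is upper semicontinuous and finite by Lemma~\ref{lemma:property-of-Lipschitz-constant}; hence $C\defeq\sup_{x\in\xdomain\cap R\ball}L_\epsilon(x)<\infty$ a.s. Together with $\sum_k\stepsize_k^2<\infty$ this yields
\[
  \sum_{k}\E\big[\norm{M_{k}-M_{k-1}}^2\,\big|\,\mc{F}_{k}\big]\le C^2\sum_k\stepsize_k^2<\infty
  \qquad\text{almost surely.}
\]

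It remains to pass from a.s.\ summable conditional variances to a.s.\ convergence of $M_n$. The one subtlety is that the bound $C$ is itself random, so the $L^2$ martingale convergence theorem does not apply directly; I would resolve this by truncation. For each $m\in\N$ let $\sigma_m\defeq\inf\{k:\norm{x_k}>m\}$, a stopping time for $\{\mc{F}_{k+1}\}$, and consider the stopped martingale $M^{(m)}_n\defeq M_{n\wedge\sigma_m}$. Its increments have conditional second moment at most $\stepsize_k^2\,C_m^2$ with $C_m\defeq\sup_{x\in\xdomain\cap m\ball}L_\epsilon(x)<\infty$ deterministic, so $\sup_n\E\norm{M^{(m)}_n}^2\le C_m^2\sum_k\stepsize_k^2<\infty$; being bounded in $L^2$, the vector martingale $M^{(m)}_n$ converges almost surely as $n\to\infty$. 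On $\{\sigma_m=\infty\}$ we have $M_n=M^{(m)}_n$ for every $n$, so $M_n$ converges there, and since $\{\sigma_m=\infty\}\uparrow\{\sup_k\norm{x_k}<\infty\}$, which has probability one by Assumption~\ref{assumption:boundedness-of-iterates}, $M_n$ converges almost surely to a finite limit, which is the claim. The main obstacle is precisely this last step---converting the almost sure (rather than uniform) bound on $\sup_k L_\epsilon(x_k)$ into convergence---and the stopping-time localization is the standard device that handles it; the remainder is bookkeeping with the already-established martingale-difference and local-boundedness properties of the noise $\error$.
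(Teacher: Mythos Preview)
Your argument is correct and follows essentially the same route as the paper: bound the conditional second moments of the martingale increments by $\stepsize_k^2 L_\epsilon(x_k)^2$, use boundedness of the iterates and upper semicontinuity of $L_\epsilon$ to make the sum finite almost surely, then invoke martingale convergence. The paper simply cites a standard result for $\ell_2$-summable martingale difference sequences that already handles almost-sure (rather than deterministic) summability, so your stopping-time localization is just an explicit unpacking of that citation. One minor indexing slip: with $\sigma_m=\inf\{k:\norm{x_k}>m\}$, the stopped process $M_{n\wedge\sigma_m}$ still includes the increment at $k=\sigma_m$, where $\norm{x_{\sigma_m}}>m$, so the bound by $C_m$ does not cover that term; replacing the stopping by the predictable truncation $\indicator\{\sigma_m>k\}$ (which is $\mc{F}_k$-measurable since $x_k\in\mc{F}_k$) fixes this without affecting the rest of the argument.
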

\begin{proof}
  Ignoring probability zero events, by
  Assumption~\ref{assumption:boundedness-of-iterates} there is a
  random variable $B$, which is finite with probability $1$, such that
  $\norm{x_k} \leq B$ for all $k \in \N$.  As $L_{\epsilon}(\cdot)$ is upper
  semicontinuous (Lemma~\ref{lemma:property-of-Lipschitz-constant}), we know
  that $\sup \{L_{\epsilon}(x) \mid \norm{x} \le B, x \in X\} <
  \infty$. Hence, using inequality~\eqref{eqn:noise-upper-bound}, we have
  \begin{equation*}
    \sum_{k=1}^\infty 
    \E \left[\stepsize_k^2
      \norm{\error_{\stepsize_k}(x_k; \statrv_k)}^2
      \mid \mc{F}_k\right]
    \leq \sum_{k=1}^\infty \stepsize_k^2
    \sup_{\norm{x} \leq B, x \in X} L_{\epsilon}(x)^2 < \infty.
  \end{equation*}
  Standard convergence results for $\ell_2$-summable martingale difference
  sequences~\cite[Theorem 5.3.33]{Dembo16} immediately give the result.
\end{proof}

Finally, we verify the fourth technical condition
Theorem~\ref{theorem:functional-convergence-general} requires by
constructing an appropriate closed-valued mapping $H : \R^d \toto \R^d$ for
any update scheme of the form~\eqref{eqn:model-based-minimization}.  Recall
the definition~\eqref{eqn:actual-subgradient-sets} of the outer
semicontinuous mapping $G(x) = \E_P[\partial f(x; \statrv)] + \partial
\regularizer(x)$.  We then have the following limiting inclusion,
which is the key result allowing our limit statements.
\begin{lemma}
  \label{lemma:existence-of-G}
  Let the sequence $x_k \in X$ satisfy $x_k \to x \in X$ and
  Assumptions~\ref{assumption:local-lipschitz}
  and~\ref{assumption:weak-convexity} hold.  Let $\{i_k\} \subset \N$ be an
  increasing sequence. Then, for
  updates~\eqref{eqn:model-based-minimization} satisfying
  Conditions~\ref{item:convex-model}--\ref{item:upper-approximation},
  \begin{equation*}
    \lim_{n \to \infty} \dist\left(\frac{1}{n} \sum_{k = 1}^n
      \wb{\gradmap}_{\stepsize_{i_k}}(x_k),
      G(x) + \normalcone_X(x)  \right) = 0.
  \end{equation*}
\end{lemma}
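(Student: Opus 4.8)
The plan is to reduce the Ces\`aro statement to a termwise one and then pass through the expectation. Since $\sum_k \stepsize_k^2 < \infty$ we have $\stepsize_k \to 0$, hence $\stepsize_{i_k} \to 0$. First I would establish the termwise bound
\[
  \dist\bigl(\meangradmap_{\stepsize_{i_k}}(x_k),\, G(x) + \normalcone_X(x)\bigr) \to 0
  \quad \text{as } k \to \infty,
\]
and then conclude via the elementary fact that if $a_k \in \R^d$ satisfies $\dist(a_k, C) \to 0$ for a closed convex set $C$, then $\dist(\tfrac{1}{n}\sum_{k=1}^n a_k, C) \to 0$: writing $a_k = \project_C(a_k) + e_k$ with $\norm{e_k} = \dist(a_k, C) \to 0$, the averages of $\project_C(a_k)$ lie in $C$ by convexity while $\tfrac{1}{n}\sum_{k=1}^n e_k \to 0$ as a Ces\`aro mean of a null sequence (the finitely many terms before $x_k$ enters a fixed neighborhood of $x$ are bounded, hence contribute $O(1/n)$). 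Here $G(x) + \normalcone_X(x) = \partial f(x) + [\partial\regularizer(x) + \normalcone_X(x)]$ is closed convex, since $\partial f(x) = \E_P[\partial f(x;\statrv)]$ is compact convex by Lemma~\ref{lemma:frechet-subgradients-of-f} and $\partial\regularizer(x) + \normalcone_X(x)$ is closed convex.

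For the termwise bound I would first work at a fixed $\statval$ and show $\dist(\gradmap_{\stepsize_{i_k}}(x_k;\statval),\, G(x;\statval) + \normalcone_X(x)) \to 0$. Write $\gradmap_k \defeq \gradmap_{\stepsize_{i_k}}(x_k;\statval)$ and let $x_k^+ \defeq x_k - \stepsize_{i_k}\gradmap_k \in X$ be the minimizer in~\eqref{eqn:gradmap}. By Lemma~\ref{lemma:gradmap-bound}, $\norm{\gradmap_k} \le \norm{G(x_k;\statval)}$, which stays bounded over $k$ since $x_k \to x$ and $x \mapsto \norm{G(x;\statval)}$ is locally bounded (Lemma~\ref{lemma:property-of-Lipschitz-constant}, for a.e.\ $\statval$); hence $x_k^+ \to x$. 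As $G(x;\statval) + \normalcone_X(x)$ is closed and $\{\gradmap_k\}$ bounded, it suffices to check that every limit point $g_\infty$ of $\gradmap_k$ lies in this set, so pass to a subsequence with $\gradmap_k \to g_\infty$. The optimality conditions for~\eqref{eqn:gradmap} and the convex subgradient sum rule give $\gradmap_k \in \partial_y[f_{x_k}(\cdot;\statval) + \regularizer + \bigindic_X]|_{y = x_k^+}$, so by convexity, for all $y$,
\[
  f_{x_k}(y;\statval) + \regularizer(y) + \bigindic_X(y)
  \ge f_{x_k}(x_k^+;\statval) + \regularizer(x_k^+) + \<\gradmap_k,\, y - x_k^+\> .
\]
I would then bound both sides back in terms of $f$: on the left, Condition~\ref{item:upper-approximation} applied with the \emph{fixed} reference point $x_0 = x$ (and $\epsilon \le \epsilon_0$ fixed) yields $f_{x_k}(y;\statval) \le f(y;\statval) + \half\delta_\epsilon(x;\statval)\norm{y - x_k}^2$ for $x_k, y \in x + \epsilon\ball$, with a constant $\delta_\epsilon(x;\statval)$ \emph{independent of $k$}; on the right, Conditions~\ref{item:equal-model}--\ref{item:subgrad-model} and convexity give $f_{x_k}(x_k^+;\statval) \ge f_{x_k}(x_k;\statval) + \<g_0, x_k^+ - x_k\> = f(x_k;\statval) + \<g_0, x_k^+ - x_k\>$ for $g_0 \in \partial_y f_{x_k}(x_k;\statval) \subset \partial f(x_k;\statval)$, with $\norm{x_k^+ - x_k} \to 0$. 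Substituting and taking $\liminf_k$ (the left side converges; on the right use continuity of $f(\cdot;\statval)$ at $x$, that $x_k^+, x \in X$, lower semicontinuity of $\regularizer$, $\gradmap_k \to g_\infty$, and $x_k, x_k^+ \to x$) gives, for $y$ near $x$,
\[
  f(y;\statval) + \regularizer(y) + \bigindic_X(y) + \half\delta_\epsilon(x;\statval)\norm{y - x}^2
  \ge f(x;\statval) + \regularizer(x) + \<g_\infty,\, y - x\> ,
\]
with equality at $y = x$; hence $g_\infty$ is a Fr\'echet subgradient at $x$ of $f(\cdot;\statval) + \regularizer + \bigindic_X + \half\delta_\epsilon(x;\statval)\norm{\cdot - x}^2$. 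Since $f(\cdot;\statval)$ is weakly convex near $x$ (Assumption~\ref{assumption:weak-convexity}) the subdifferential sum rule applies and the quadratic has zero gradient at $x$, so $g_\infty \in \partial f(x;\statval) + \partial\regularizer(x) + \normalcone_X(x) = G(x;\statval) + \normalcone_X(x)$.

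To finish I would integrate. Set $d_k(\statval) \defeq \dist(\gradmap_{\stepsize_{i_k}}(x_k;\statval),\, G(x;\statval) + \normalcone_X(x))$, so $d_k(\statrv) \to 0$ almost surely; choosing any $v \in \partial\regularizer(x)$ and a measurable selection $q(\statval) \in \partial f(x;\statval)$ and using $0 \in \normalcone_X(x)$, we get $d_k(\statval) \le \norm{\gradmap_{\stepsize_{i_k}}(x_k;\statval) - q(\statval) - v}$, which for $k$ large is dominated by an integrable function of $\statrv$ by Lemma~\ref{lemma:gradmap-bound} and Assumption~\ref{assumption:local-lipschitz}, so dominated convergence gives $\E[d_k(\statrv)] \to 0$. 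Taking the measurable selection $p_k(\statval) \defeq \project_{G(x;\statval) + \normalcone_X(x)}(\gradmap_{\stepsize_{i_k}}(x_k;\statval))$ (so $\norm{\gradmap_{\stepsize_{i_k}}(x_k;\statval) - p_k(\statval)} = d_k(\statval)$) we have $\E[p_k(\statrv)] \in \E_P[\partial f(x;\statrv)] + \partial\regularizer(x) + \normalcone_X(x) = G(x) + \normalcone_X(x)$ by the definition of the set-valued integral and convexity of the fixed sets $\partial\regularizer(x), \normalcone_X(x)$, whence $\dist(\meangradmap_{\stepsize_{i_k}}(x_k),\, G(x) + \normalcone_X(x)) \le \norm{\E[\gradmap_{\stepsize_{i_k}}(x_k;\statrv) - p_k(\statrv)]} \le \E[d_k(\statrv)] \to 0$; the Ces\`aro reduction from the first paragraph then concludes. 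The step I expect to be the main obstacle is the pointwise limit passage above: converting the optimality inequality for the \emph{moving} model $f_{x_k}$ into a clean Fr\'echet subgradient inequality for the limiting objective at $x$, keeping every inequality direction correct and, crucially, invoking Condition~\ref{item:upper-approximation} at the fixed reference point $x$ so that the quadratic error constant $\delta_\epsilon(x;\statval)$ is uniform in $k$; a secondary technical point is the measure-theoretic bookkeeping (measurable selections $p_k$, an integrable envelope for $d_k$, and the identification $\E[G(x;\statrv) + \normalcone_X(x)] = G(x) + \normalcone_X(x)$).
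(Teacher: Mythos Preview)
Your proposal is correct and takes a genuinely different route from the paper's proof.

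The paper argues componentwise: it writes the optimality condition at $x_k^+$ as a decomposition $\gradmap_{\stepsize_{i_k}}(x_k;\statval) = \subgrad^+_k + v^+_k + w^+_k$ with $\subgrad^+_k \in \partial f_{x_k}(x_k^+;\statval)$, $v^+_k \in \partial\regularizer(x_k^+)$, $w^+_k \in \normalcone_X(x_k^+)$, and then pushes each piece to its limit separately using outer semicontinuity. This requires two auxiliary lemmas establishing that the model $f_{x_k}$ is locally Lipschitz with integrable constant and that subgradients of $f_{x_k}$ at nearby points converge into $\partial f(x;\statval)$. The paper also tracks an explicit bound $\norm{w_k^+} \le 2L_\epsilon(x;\statval)$ on the normal-cone component, which lets it integrate against the compact set $\normalcone_X(x)\cap 2L_\epsilon(x)\ball$ and apply dominated convergence termwise before the Ces\`aro average.

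You instead keep $\gradmap_k$ intact, carry the full convex subgradient inequality for $f_{x_k}+\regularizer+\bigindic_X$ at $x_k^+$ through the limit, and only \emph{then} invoke the subdifferential sum rule at the fixed point $x$ (legitimate because $f(\cdot;\statval)$ is weakly convex). This sidesteps the two model lemmas entirely and is more conceptual. The trade-off shows up in the integration step: where the paper's explicit three-term decomposition (with the $2L_\epsilon$ bound on the cone part) gives the inclusion $\E[p_k]\in G(x)+\normalcone_X(x)$ for free, you have to pay with a measurable-selection argument to split $p_k(\statval)\in \partial f(x;\statval)+[\partial\regularizer(x)+\normalcone_X(x)]$ into integrable measurable pieces. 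That step is standard (compact-valued measurable multifunction intersected with a closed translate, then Kuratowski--Ryll-Nardzewski) but is exactly the ``secondary technical point'' you flagged; it is worth writing out, since without a bound on the normal-cone component it is the only place ensuring the decomposition integrates correctly.
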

\begin{proof}
  We begin with two intermediate lemmas on the continuity properties of the
  models $f_x$.  Both lemmas assume the conditions of
  Lemma~\ref{lemma:existence-of-G}.

  \begin{lemma}
    \label{lemma:intermediate-lipschitzian}
    There exists $\lipf_\epsilon'(x; \statval)$ such that $y \mapsto f_x(y;
    \statval)$ is $\lipf_\epsilon'(x; \statval)$-Lipschitz for $y\in x +
    (\epsilon/2) \ball$, and $\E[\lipf_\epsilon'(x;\statrv)] < \infty$.
  \end{lemma}
  \begin{proof}
    Let $\epsilon  > 0$, and let $g = \subgrad(x; \statval) \in
    \partial f_x(x; \statval) \subset \partial f(x; \statval)$.  We have
    that
    \begin{equation*}
      f_x(y; \statval) \ge f_x(x; \statval) + \<g, y - x\>
      \ge f(x;\statval) - \lipf_\epsilon(x; \statval) \norm{y - x}
    \end{equation*}
    by the
    local Lipschitz condition~\ref{assumption:local-lipschitz} on $f$.
    Condition~\ref{item:upper-approximation}
    and the Lipschitzian assumptions on $f$ also guarantee that for $y \in x
    + \epsilon \ball$,
    \begin{equation*}
      f_x(y; \statval)
      \le f(y; \statval) + \half \delta_\epsilon(x; \statval)
      \norm{y - x}^2
      \le f(x; \statval) + \left[\lipf_\epsilon(x; \statval)
        + \delta_\epsilon(x; \statval) \norm{x-y}\right]
      \norm{x - y}.
    \end{equation*}
    These two boundedness conditions and
    convexity of the model $f_x$ imply~\cite[Lemma
      IV.3.1.1]{HiriartUrrutyLe93ab} that $y \mapsto f_x(y; \statval)$ is $2
    \lipf_\epsilon(x;\statval) + \delta_\epsilon(x; \statval)
    \epsilon$-Lipschitz for $y \in x + (\epsilon/2)\ball$.
  \end{proof}

  \begin{lemma}
    \label{lemma:intermediate-outer-semicontinuity}
    Let $x_k, y_k \in \xdomain$ satisfy
    $x_k \to x, y_k \to x$, and let $g_k \in \partial f_{x_k}(y_k; \statval)$.
    Then there exists an integrable function
    $\lipf(\cdot)$ such that for large $k$,
    $\dist(g_k, \partial f(x; \statval))
    \le \lipf(\statval)$ for all $\statval$,
    and $\dist(g_k, \partial f(x;\statval)) \to 0$.
  \end{lemma}
  \begin{proof}
    By Lemma~\ref{lemma:intermediate-lipschitzian},
    we know that there exists an integrable $\lipf$
    such that $\norm{g_k} \le \lipf(\statval)$ for all large enough $k$.
    This gives the first claim of the lemma, as $f(\cdot; \statval)$
    is locally Lipschitz (Assumption~\ref{assumption:local-lipschitz}).
    Let $g_\infty$ be any limit point of the sequence $g_k$;
    by moving to a subsequence if necessary, we assume
    without loss of generality that $g_k \to g_\infty \in \R^d$.
    Now let $y \in x + \epsilon \ball$. Then for large $k$
    we have
    \begin{align*}
      f(y; \statval)
      \!\stackrel{(i)}{\ge}\! f_{x_k}(y; \statval)
      - \frac{\delta_\epsilon(x;\statval)}{2} \norm{y - x_k}^2
      & \!\ge\! f_{x_k}(x_k; \statval)
      + \<g_k, y - x_k\> - \frac{\delta_\epsilon(x;\statval)}{2}
      \norm{y - x_k}^2 \\
      & \to f(x; \statval) + \<g_\infty, y - x\>
      - \frac{\delta_\epsilon(x;\statval)}{2} \norm{y - x}^2,
    \end{align*}
    where inequality $(i)$ is a consequence of
    Condition~\ref{item:upper-approximation}.
    By definition of the Fr\'{e}chet subdifferential,
    we have $g_\infty \in \partial f(x;\statval)$ as desired.
  \end{proof}

  Now we return to the proof of Lemma~\ref{lemma:existence-of-G}.
  Let $x_k^+(\statval)$ be shorthand for the result of the
  update~\eqref{eqn:model-based-minimization}
  when applied with the stepsize
  $\stepsize = \stepsize_{i_k}$.  For any $\epsilon > 0$,
  Lemma~\ref{lemma:gradmap-bound} shows
  that
  $\norm{x_k^+(\statval) - x_k} \le \stepsize_{i_k} L_\epsilon(x;
  \statval)$. By the (convex) optimality conditions for
  $x_k^+(\statval)$, there exists a vector $\subgrad^+(x_k; \statval)$ such
  that
  \begin{equation*}
    \subgrad^+(x_k; \statval)
    \in \partial f_{x_k}(x_k^+(\statval); \statval)
  \end{equation*}
  and
  \begin{equation*}
    \gradmap_{\stepsize_{i_k}}(x_k; \statval)
    \in \subgrad^+(x_k; \statval) + \partial \regularizer(x_k^+(\statval))
    + \normalcone_X(x_k^+(\statval)).
  \end{equation*}
  Let $v^+_k(\statval) \in \partial \regularizer(x_k^+(\statval))$ and
  $w^+_k(\statval) \in \normalcone_X(x_k^+(\statval))$ be the vectors
  such that
  \begin{equation*}
    \gradmap_{\stepsize_{i_k}}(x_k; \statval)
    = \subgrad^+(x_k; \statval) + v^+_k(\statval) + w_k^+(\statval).
  \end{equation*}

  The three set-valued mappings $x \mapsto \partial f(x; \statval)$, $x
  \mapsto \partial \regularizer(x)$, and $x \mapsto \normalcone_X(x)$ are
  outer semicontinuous (see Lemmas~\ref{lemma:subg-outer-semicontinuity},
  \ref{lemma:normal-outer-semicontinuity},
  and~\ref{lemma:frechet-subgradients-of-f}).  Since $x_k^+(\statval) \to x$
  tends to $x$ as $k \to \infty$ (as $x_k \to x$), this outer semicontinuity
  and Lemma~\ref{lemma:intermediate-outer-semicontinuity} thus imply
  \begin{equation}
    \dist\!\left(\subgrad^+(x_k; \statval), \partial f(x; \statval)\right)
    \to 0, ~
    \dist\!\left(v_k^+(\statval), \partial \regularizer(x)\right) \to 0, ~
    \dist\!\left(w_k^+(\statval), \normalcone_X(x)\right) \to 0
    \label{eqn:user-outer-semi}
  \end{equation}
  as $k \to \infty$.  Because $x_k \to x$ and the Lipschitz constants
  $L_\epsilon(\cdot; \statval)$ are upper semicontinuous,
  Eq.~\eqref{eqn:user-outer-semi}
  and Lemma~\ref{lemma:gradmap-bound} also imply that
  \begin{align*}
    \limsup_k \norm{\subgrad^+(x_k; \statval)
    + v_k^+(\statval)} \leq L_\epsilon(x; \statval)
    ~~ \mbox{and} ~~
    \limsup_k \norms{\gradmap_{\stepsize_{i_k}}(x_k; \statval)} \leq L_\epsilon(x; \statval).
  \end{align*} 
  By the triangle inequality, we thus obtain
  $\limsup_k \norm{w_k^+(\statval)} \leq 2 L_\epsilon(x; \statval)$, and hence, 
  \begin{align*}
    \dist\left(w_k^+(\statval), \normalcone_\xdomain(x) \cap
    2L_\epsilon(x; \statval) \cdot \ball\right) \to 0.
  \end{align*}
  That
  $L_\epsilon(x) = \E[L_\epsilon(x; \statrv)^2]^\half$ yields
  $\normalcone_\xdomain(x) \cap 2 L_\epsilon(x) \ball \supset \int
  (\normalcone_\xdomain(x) \cap 2 L_\epsilon(x; \statval) \cdot \ball )
  dP(\statval)$, and the definition of the set-valued integral and convexity
  of $\dist(\cdot, \cdot)$
  imply that
  \begin{align}
    \lefteqn{\dist\left(\frac{1}{n} \sum_{k = 1}^n \meangradmap_{\stepsize_{i_k}}(x_k),
    G(x) + 
    \normalcone_X(x) \cap 2 L_\epsilon(x) \cdot \ball \right)} \nonumber \\
    & \le \frac{1}{n} \sum_{k = 1}^n
    \int \dist\!\left(\gradmap_{\stepsize_{i_k}}(x_k; \statval),
      \partial f(x; \statval)
      + \partial \regularizer(x)
      + \normalcone_X(x) \cap 2 L_\epsilon(x; \statval) \cdot \ball \right)
      dP(\statval).
      \label{eqn:thing-to-dct}
  \end{align}

  We now bound the preceding integral.  By the definition of Minkowski
  addition and the triangle inequality, we have the pointwise convergence
  \begin{align*}
    \lefteqn{\dist\!\left(\gradmap_{\stepsize_{i_k}}(x_k; \statval),
    \partial f(x; \statval) + \partial \regularizer(x)
    + \normalcone_X(x) \cap 2 L_\epsilon(x; \statval) \ball \right)} \\
    & \!\le\hspace{-.02em} \dist\!\left(\subgrad(x_k; \statval), \partial f(x; \statval)\right)
    + \dist\!\left(v_k^+(\statval), \partial \regularizer(x)\right)
    + \dist\!\left(w_k^+(\statval), \normalcone_X(x) \cap 2L_\epsilon(x; \statval)
    \ball\right) \!\rightarrow\! 0
  \end{align*}
  as $k \to \infty$ by the earlier outer semicontinuity convergence
  guarantee~\eqref{eqn:user-outer-semi}.  For suitably large $k$, the first
  term in the preceding sum is bounded by an integrable function
  $\lipf_\epsilon'(x; \statval)$ by
  Lemma~\ref{lemma:intermediate-outer-semicontinuity} and the latter two are
  bounded by $2 L_\epsilon(x; \statval)$, which is square integrable by
  Lemma~\ref{lemma:property-of-Lipschitz-constant}.  Lebesgue's dominated
  convergence theorem thus implies that the individual summands in
  expression~\eqref{eqn:thing-to-dct} converge to zero, and the analytic
  fact that the Ces\'{a}ro mean $\frac{1}{n} \sum_{k = 1}^n a_k \to 0$ if
  $a_k \to 0$ gives the result.
\end{proof}

With this lemma, we may now show the functional convergence of our
stochastic model-based update
schemes~\eqref{eqn:model-based-minimization}. We have verified that each of
the conditions
\ref{item:bounded-iterates}--\ref{item:convergence-set-mapping} of
Theorem~\ref{theorem:functional-convergence-general} hold with the mapping
$H(x) = -\normalcone_\xdomain(x) - G(x)$. Indeed, $H$ is closed-valued and
outer-semicontinuous as $G(\cdot)$ is convex compact o.s.c.\ and
$\normalcone_\xdomain(\cdot)$ is closed and o.s.c. Thus, with slight abuse
of notation, let $x(\cdot)$ be the linear
interpolation~\eqref{eqn:interpolation} of the iterates $x_k$ for either the
stochastic prox-linear algorithm or the stochastic subgradient algorithm,
where we recall that $x^t(\cdot) = x(t + \cdot)$. We have
\begin{theorem}
  \label{theorem:functional-convergence-prox-linear}
  Let Assumptions~\ref{assumption:local-lipschitz},
  \ref{assumption:weak-convexity},
  and~\ref{assumption:boundedness-of-iterates} hold.
  With probability one over the random sequence
  $\statrv_i \simiid P$ we have the following.
  For any sequence
  $\left\{\tau_k\right\}_{k=1}^\infty$, the function sequence 
  $\left\{x^{\tau_k}(\cdot)\right\}$ is relatively compact in 
  $\cont(\R_+, \R^d)$. In addition, for any sequence 
  $\tau_k \to \infty$, any limit point of 
  $\left\{x^{\tau_k}(\cdot)\right\}$ in $\cont(\R_+, \R^d)$ 
  satisfies
  \begin{equation*}
    \bar{x}(t) = \bar{x}(0) + \int_{0}^t y(\tau) d\tau
    ~~ \mbox{for~all~} t \in \R_+,
    ~~ \mbox{where} ~~
    y(\tau) \in - G(x(\tau)) - \normalcone_\xdomain(x(\tau)).
  \end{equation*}
\end{theorem}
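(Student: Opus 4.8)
The plan is to recognize the stochastic model-based iteration, in its gradient-mapping form~\eqref{eqn:stochastic-methods-as-approximation}, as a special case of the abstract stochastic approximation~\eqref{eqn:stochastic-approximation} and then invoke Theorem~\ref{theorem:functional-convergence-general}. Concretely, I would take the (single-valued, deterministic) mappings $g_k(\cdot) \defeq -\meangradmap_{\stepsize_k}(\cdot)$, set $y_k = g_k(x_k) = -\meangradmap_{\stepsize_k}(x_k)$ and $\noise_{k+1} = -\error_{\stepsize_k}(x_k;\statrv_k)$, so that~\eqref{eqn:stochastic-methods-as-approximation} becomes precisely $x_{k+1} = x_k + \stepsize_k[y_k + \noise_{k+1}]$ with $y_k \in g_k(x_k)$, exactly the form~\eqref{eqn:stochastic-approximation} requires. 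All remaining work is to verify the four hypotheses of Theorem~\ref{theorem:functional-convergence-general} on a single probability-one event, obtained by intersecting the finitely many full-measure events below.

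Hypothesis~\ref{item:summable-stepsizes} is the assumed stepsize condition $\sum_k \stepsize_k = \infty$, $\sum_k \stepsize_k^2 < \infty$. For hypothesis~\ref{item:bounded-iterates}, the bound $\sup_k \norm{x_k} < \infty$ is Assumption~\ref{assumption:boundedness-of-iterates}, and then $\sup_k \norm{y_k} = \sup_k \norm{\meangradmap_{\stepsize_k}(x_k)} \le \sup_k L_\epsilon(x_k) < \infty$ by Lemma~\ref{lemma:gradmap-bound} together with the upper semicontinuity of $L_\epsilon$ on $\xdomain$ (Lemma~\ref{lemma:property-of-Lipschitz-constant}), which is therefore finite on any compact set containing the iterates. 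Hypothesis~\ref{item:noise-summation}, convergence of the weighted noise series, is exactly Lemma~\ref{lemma:summability-of-noise}; recall its proof exploits that $\error_{\stepsize_k}(x_k;\statrv_k)$ is a square-integrable martingale difference sequence adapted to $\mc{F}_k$ with conditional second moment at most $L_\epsilon(x_k)^2$, which is uniformly bounded on the event $\{\sup_k \norm{x_k} \le B\}$. Finally, for hypothesis~\ref{item:convergence-set-mapping} I would take $H(x) \defeq -G(x) - \normalcone_\xdomain(x)$; this is closed-valued since $G$ is compact-convex valued and $\normalcone_\xdomain$ is closed, both being outer semicontinuous, and the required Ces\`{a}ro--distance condition---that $\lim_n \dist\!\big(\tfrac1n \sum_{k=1}^n g_{n_k}(z_k),\, H(z)\big) = 0$ whenever $z_k \to z$ in $\xdomain$ and $\{n_k\}$ is increasing---is precisely the content of Lemma~\ref{lemma:existence-of-G} after negating (taking $i_k = n_k$ there).

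Having verified all four hypotheses, Theorem~\ref{theorem:functional-convergence-general} immediately yields the conclusion: for any sequence $\{\tau_k\}$ the shifted interpolations $\{x^{\tau_k}(\cdot)\}$ are relatively compact in $\cont(\R_+,\R^d)$, and if $\tau_k \to \infty$ then any limit point $\wb{x}(\cdot)$ obeys $\wb{x}(t) = \wb{x}(0) + \int_0^t y(\tau)\,d\tau$ for all $t \ge 0$ with $y(\tau) \in H(\wb{x}(\tau)) = -G(\wb{x}(\tau)) - \normalcone_\xdomain(\wb{x}(\tau))$, as claimed. I do not expect a genuine obstacle here: the analytic substance is entirely carried by Lemmas~\ref{lemma:gradmap-bound}, \ref{lemma:property-of-Lipschitz-constant}, \ref{lemma:summability-of-noise}, and~\ref{lemma:existence-of-G}. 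The only points demanding care are bookkeeping---matching the index conventions between~\eqref{eqn:stochastic-methods-as-approximation} and~\eqref{eqn:stochastic-approximation} for the noise series (an index shift that does not affect convergence, since $\stepsize_k \error_{\stepsize_k}(x_k;\statrv_k) \to 0$), checking that Lemma~\ref{lemma:existence-of-G} is stated for arbitrary convergent sequences $z_k \to z$ rather than only the algorithm's own iterates so that it supplies hypothesis~\ref{item:convergence-set-mapping} verbatim, and intersecting the probability-one events so that all conclusions hold simultaneously almost surely.
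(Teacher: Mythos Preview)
Your proposal is correct and follows essentially the same approach as the paper: the paper's argument for this theorem is precisely to verify hypotheses~\ref{item:bounded-iterates}--\ref{item:convergence-set-mapping} of Theorem~\ref{theorem:functional-convergence-general} via Lemmas~\ref{lemma:gradmap-bound}, \ref{lemma:property-of-Lipschitz-constant}, \ref{lemma:summability-of-noise}, and~\ref{lemma:existence-of-G}, taking $H(x) = -G(x) - \normalcone_\xdomain(x)$, and then invoke that theorem directly. Your bookkeeping remarks about the index shift in the noise series and the intersection of full-measure events are appropriate cautions but do not reflect any divergence from the paper's route.
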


\subsection{Properties of the limiting differential inclusion}
\label{sec:diff-inclusion-properties}

Theorem~\ref{theorem:functional-convergence-prox-linear} establishes that
the updates~\eqref{eqn:model-based-minimization}, which include stochastic
subgradient methods (Ex.~\ref{example:sgd}), stochastic prox-linear methods
(Ex.~\ref{example:stochastic-prox-linear}), or stochastic proximal point
methods
(Exs.~\ref{example:stochastic-prox-point}--\ref{example:guarded-stochastic-prox-point}), have sample paths asymptotically
approximated by the differential inclusion
\begin{equation*}
  \dot{x} \in -G(x) - \normalcone_X(x)
  ~~ \mbox{where} ~~
  G(x) = \partial f(x) + \partial \regularizer(x)
\end{equation*}
for the objective $f(x) = \E[f(x;\statrv)]$.
To establish convergence of the iterates $x_k$ themselves, we must
understand the limiting properties of trajectories of the preceding
differential inclusion. 

We define the minimal subgradient
\begin{equation*}
  \subgrad\opt(x) \defeq \argmin_g \left\{\norm{g}^2 \mid
  g \in \partial f(x) + \partial \regularizer(x) + \normalcone_X(x)
  \right\}
  = \project_{G(x) + \normalcone_X(x)}(0).
\end{equation*}
Before presenting the theorem on the differential inclusion, we need one 
regularity assumption on the objective function $F(x)$ and the constraint 
set $X$. Recall that a function $f$ is coercive if $f(x) \to \infty$ as 
$\norm{x} \to \infty$. 

\begin{assumption}
  \label{assumption:objective-coercive}
  The function $x \mapsto F(x) + \bigindic_X(x)$ is coercive. 
\end{assumption}

This assumption ensures that the sublevel sets of the objective function 
$F + \bigindic_X$ are compact. Now we have the following convergence theorem.

\begin{theorem}
  \label{theorem:monotone-trajectory}
  Let Assumptions~\ref{assumption:local-lipschitz},
  \ref{assumption:weak-convexity}, and~\ref{assumption:objective-coercive}
  hold.  Let $x(\cdot)$ be a solution to the differential inclusion $\dot{x}
  \in -\partial f(x) - \partial \regularizer(x) - \normalcone_X(x)$
  initialized at $x(0) \in X$. Then $x(t)$ exists and is in $X$ for all $t
  \in \R_+$, $\sup_t \norm{x(t)} < \infty$, $x(t)$ is Lipschitz in $t$, and
  \begin{equation*}
    f(x(t)) + \regularizer(x(t)) 
    + \int_0^t \norm{\subgrad\opt(x(\tau))}^2 d\tau
    \le f(x(0)) + \regularizer(x(0)).
  \end{equation*}
\end{theorem}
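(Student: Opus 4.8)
The plan is to prove the theorem in two moves: construct one distinguished global solution that visibly enjoys all the asserted properties, and then invoke uniqueness so that the arbitrary solution $x(\cdot)$ in the statement must coincide with it. Throughout write $F = f + \regularizer$, $G(x) = \partial f(x) + \partial\regularizer(x)$ as in~\eqref{eqn:actual-subgradient-sets} (so $\partial F(x) = G(x)$ by Lemma~\ref{lemma:frechet-subgradients-of-f}), $K(x) = G(x) + \normalcone_X(x)$, and $\subgrad\opt(x) = \project_{K(x)}(0)$.

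First I would set up the compact arena. By Assumption~\ref{assumption:objective-coercive} the sublevel set $A_0 = \{x \in X : F(x) \le F(x(0))\}$ is compact; over a compact neighborhood of it $\partial f$ is bounded (Lemma~\ref{lemma:frechet-subgradients-of-f} together with outer semicontinuity), and a finite-covering argument upgrades the pointwise-local weak convexity of Assumption~\ref{assumption:weak-convexity} to weak convexity of $f$ on that neighborhood with a single finite constant $\lambda$. Since $0 \in \normalcone_X(x)$, $\norm{\subgrad\opt(x)} = \dist(0,K(x)) \le \norm{G(x)}$, so writing $\subgrad\opt(x) = g\opt + w\opt$ with $g\opt \in G(x)$ and $w\opt \in \normalcone_X(x)$ gives $\norm{w\opt} \le 2\norm{G(x)}$; with $R = 2\sup_{x \in A_0}\norm{G(x)} < \infty$, the truncated field $G_R(x) = -G(x) - (\normalcone_X(x) \cap R\ball)$ is outer semicontinuous and compact-convex valued and still contains $-\subgrad\opt(x)$ for $x \in A_0$. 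The truncation is immaterial for the trajectory we build, since the monotone property below confines it to $A_0$; I expect the routine-but-fiddly part of the proof to be exactly this bookkeeping (truncating $\normalcone_X$, extending the field harmlessly outside $A_0$ so Lemma~\ref{lemma:monotone-trajectory} applies verbatim, and producing the uniform weak-convexity constant).

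The heart of the argument, and the place where weak convexity really enters, is the infinitesimal Lyapunov estimate
\[
  F'(x; -\subgrad\opt(x)) \le -\norm{\subgrad\opt(x)}^2 \qquad \text{for all } x \in X .
\]
To prove it, note that near $x$ we may write $F = \phi - \tfrac{\lambda}{2}\norm{\cdot}^2$ with $\phi$ convex, so $\partial F(x) = \partial\phi(x) - \lambda x = G(x)$ and the directional derivative has the support-function form $F'(x;v) = \sup_{g \in G(x)} \<g,v\>$; applied at $v = -\subgrad\opt(x)$ this gives $F'(x;-\subgrad\opt(x)) = -\inf_{g \in G(x)} \<g,\subgrad\opt(x)\>$, and since $G(x) \subset K(x)$ while the variational inequality for the projection $\subgrad\opt(x) = \project_{K(x)}(0)$ reads $\<g,\subgrad\opt(x)\> \ge \norm{\subgrad\opt(x)}^2$ for every $g \in K(x)$, the estimate follows. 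I would then apply Lemma~\ref{lemma:monotone-trajectory} to $G_R$ with $V = F$ and $W(x,v) = \norm{v}^2$ (continuous, convex in $v$): its hypothesis holds because $v = -\subgrad\opt(x) \in G_R(x)$ and $V'(x;v) + W(x;v) = F'(x;-\subgrad\opt(x)) + \norm{\subgrad\opt(x)}^2 \le 0$. This yields an absolutely continuous $\tilde x : \R_+ \to \R^d$ with $\tilde x(0) = x(0)$, $\dot{\tilde x}(t) \in G_R(\tilde x(t)) \subset -\partial f(\tilde x(t)) - \partial\regularizer(\tilde x(t)) - \normalcone_X(\tilde x(t))$, and $F(\tilde x(T)) - F(\tilde x(0)) + \int_0^T \norm{\dot{\tilde x}(t)}^2\,dt \le 0$ for all $T$. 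Hence $\tilde x$ solves the stated inclusion; $F(\tilde x(t)) \le F(\tilde x(0))$ keeps $\tilde x(t) \in A_0$, so $\sup_t \norm{\tilde x(t)} < \infty$, $\tilde x(t) \in X$ for all $t$, and $\norm{\dot{\tilde x}(t)}$ is bounded (Lipschitzness); and since $-\dot{\tilde x}(t) \in K(\tilde x(t))$ we have $\norm{\dot{\tilde x}(t)} \ge \dist(0,K(\tilde x(t))) = \norm{\subgrad\opt(\tilde x(t))}$, so the displayed integral inequality already gives the Lyapunov bound of the theorem for $\tilde x$.

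Finally I would establish uniqueness, which transfers everything to the given $x(\cdot)$. For two solutions $x_1,x_2$ of $\dot x \in -\partial f(x) - \partial\regularizer(x) - \normalcone_X(x)$, on any finite interval $[0,T]$ both remain in a common compact set (they are absolutely continuous and hence continuous), on which monotonicity of $\partial\regularizer$ and of $\normalcone_X$, together with $\lambda$-hypomonotonicity of $\partial f$ coming from weak convexity, give the one-sided Lipschitz inequality $\<g_1 - g_2, x_1 - x_2\> \le \lambda\norm{x_1-x_2}^2$ for selections of the field; Gronwall's lemma applied to $t \mapsto \norm{x_1(t) - x_2(t)}^2$ (equivalently, Lemma~\ref{lemma:kunze-existence}) then forces $x_1 \equiv x_2$. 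Consequently any solution $x(\cdot)$ with $x(0) \in X$ agrees with $\tilde x(\cdot)$ on its interval of existence; since $\tilde x$ is global with bounded derivative, $x$ extends to all of $\R_+$ and equals $\tilde x$, so $x(t) \in X$ for all $t$, $\sup_t \norm{x(t)} < \infty$, $x$ is Lipschitz, and the Lyapunov inequality holds. The main obstacle I anticipate is the infinitesimal estimate above, i.e. genuinely exploiting the weak-convexity/regularity structure (the identity $F'(x;v) = \sup_{g \in G(x)}\<g,v\>$ and the projection characterization of $\subgrad\opt$); the compactification and truncation steps are where the technical care is needed but should be routine.
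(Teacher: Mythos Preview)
Your proposal is correct and follows essentially the same approach as the paper: both hinge on the infinitesimal Lyapunov estimate $V'(x;-\subgrad\opt(x)) + \norm{\subgrad\opt(x)}^2 \le 0$ derived from weak convexity and the projection characterization of $\subgrad\opt$, invoke Lemma~\ref{lemma:monotone-trajectory} for existence of a monotone trajectory, and use the one-sided Lipschitz condition (Lemma~\ref{lemma:kunze-existence}) for uniqueness. The only cosmetic differences are that the paper takes $V = F + \bigindic_X$ (so that $x(t)\in X$ falls out of the Lyapunov inequality directly) rather than your $V = F$ with a truncated normal cone, and that the paper defers the compactness/extension bookkeeping to a final ``Part 3'' whereas you front-load it.
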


We prove the theorem in Section~\ref{sec:proof-monotone-trajectory},
giving a few corollaries to show that solutions to the
differential inclusion converge to stationary points of $f + \regularizer$.
\begin{corollary}
  \label{corollary:constant-is-stationary}
  Let $x(\cdot)$ be a solution to
  $\dot{x} \in -G(x) - \normalcone_X(x)$
  and assume that for some $t > 0$ we have $f(x(t)) = f(x(0))$. Then
  $\subgrad\opt(x(\tau)) = 0$ for all $\tau \in [0, t]$.
\end{corollary}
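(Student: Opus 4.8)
The plan is to exploit the Lyapunov inequality from Theorem~\ref{theorem:monotone-trajectory} directly. Since $x(\cdot)$ solves $\dot{x} \in -G(x) - \normalcone_X(x)$ with $x(0) \in X$, Theorem~\ref{theorem:monotone-trajectory} applies and gives, for every $s \ge 0$,
\begin{equation*}
  f(x(s)) + \regularizer(x(s)) + \int_0^s \norm{\subgrad\opt(x(\tau))}^2 \, d\tau
  \le f(x(0)) + \regularizer(x(0)).
\end{equation*}
The first thing I would do is establish that $\tau \mapsto (f+\regularizer)(x(\tau))$ is nonincreasing on $[0,t]$. This follows by applying the same Lyapunov inequality with the trajectory restarted at time $\tau_1$: for $0 \le \tau_1 \le \tau_2$, the function $s \mapsto x(\tau_1 + s)$ is itself a solution of the differential inclusion with initial point $x(\tau_1) \in X$, so
\begin{equation*}
  (f+\regularizer)(x(\tau_2)) + \int_{\tau_1}^{\tau_2} \norm{\subgrad\opt(x(\tau))}^2 \, d\tau
  \le (f+\regularizer)(x(\tau_1)),
\end{equation*}
and in particular $(f+\regularizer)(x(\tau_2)) \le (f+\regularizer)(x(\tau_1))$.

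Next I would use the hypothesis $f(x(t)) = f(x(0))$. Because $x(\tau) \in X$ for all $\tau$ (again by Theorem~\ref{theorem:monotone-trajectory}), we have $\regularizer(x(\tau)) < \infty$, and I would first need to rule out the possibility that the \emph{regularizer} part moves while the $f$ part stays fixed. The cleanest route is to observe that the hypothesis should really be read as $F(x(t)) = F(x(0))$ where $F = f + \regularizer$; alternatively, since $(f+\regularizer)(x(\tau))$ is nonincreasing and $f(x(\tau))$, $\regularizer(x(\tau))$ need not individually be monotone, I would instead invoke the stronger conclusion available from the proof of Theorem~\ref{theorem:monotone-trajectory}, namely that $t \mapsto F(x(t))$ is the relevant monotone quantity; with $F(x(t)) = F(x(0))$ the displayed inequality forces $\int_0^t \norm{\subgrad\opt(x(\tau))}^2 \, d\tau \le 0$, hence $= 0$.

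From $\int_0^t \norm{\subgrad\opt(x(\tau))}^2 \, d\tau = 0$ and nonnegativity of the integrand, I would conclude $\norm{\subgrad\opt(x(\tau))} = 0$ for almost every $\tau \in [0,t]$. To upgrade ``almost every'' to ``every $\tau \in [0,t]$'', I would use that $x(\cdot)$ is (Lipschitz) continuous and that $x \mapsto \subgrad\opt(x) = \project_{G(x) + \normalcone_X(x)}(0)$ has a lower-semicontinuous norm: indeed $G(\cdot) + \normalcone_X(\cdot)$ is outer semicontinuous (by the $\gr$-closedness of $G$ and $\normalcone_X$ established earlier), so $\dist(0, G(x)+\normalcone_X(x))$ is lower semicontinuous in $x$, and hence $\tau \mapsto \norm{\subgrad\opt(x(\tau))}$ is lower semicontinuous; a nonnegative lower semicontinuous function that vanishes a.e.\ on $[0,t]$ vanishes everywhere on $[0,t]$. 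The main obstacle I anticipate is precisely this last upgrade together with the bookkeeping around whether the hypothesis pins down $f$ alone or $f+\regularizer$; modulo that, the argument is a short consequence of Theorem~\ref{theorem:monotone-trajectory} applied to time-shifted trajectories.
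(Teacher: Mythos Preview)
Your approach is essentially the paper's: the Lyapunov inequality of Theorem~\ref{theorem:monotone-trajectory} forces $\int_0^t \norm{\subgrad\opt(x(\tau))}^2\,d\tau = 0$, hence $\subgrad\opt(x(\tau)) = 0$ a.e., and the upgrade to all $\tau$ uses continuity of $x(\cdot)$ together with outer semicontinuity of $G + \normalcone_X$. The restart/time-shift detour is unnecessary (the inequality at $s=t$ already yields the integral bound directly), and your concern about $f$ versus $F = f+\regularizer$ is well placed---the paper's statement should read $F$, as both its proof and its later use in Lemma~\ref{lemma:extreme-value-stationary-point} make clear.
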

\begin{proof}
  By Theorem~\ref{theorem:monotone-trajectory}, we have that
  $\int_0^t \norm{\subgrad\opt(x(\tau))}^2 d\tau = 0$, so that
  $\subgrad\opt(x(\tau)) = 0$ for almost every $\tau \in [0, t]$.
  The continuity of $x(\cdot)$ and outer semi-continuity of
  $G$ extend this to all $\tau$.
\end{proof}

In addition, we can show that all cluster points of any trajectory
solving the differential inclusion~\eqref{eqn:differential-inclusion}
are stationary. First, we recall the following definition.
\begin{definition}
  \label{definition:cluster-points}
  Let $\{x(t)\}_{t \ge 0}$ be a trajectory. A point $x_\infty$ is a
  \emph{cluster point} of $x(t)$ if there exists an increasing sequence
  $t_n \to \infty$ such that $x(t_n) \to x_\infty$.
\end{definition}
\noindent
We have the following observation.
\begin{corollary}
  \label{corollary:almost-clusters-stationary}
  Let $x(\cdot)$ be the trajectory of $\dot{x} \in -G(x) - \normalcone_X(x)$
  and let $x_\infty$ be a cluster point of $x(\cdot)$. Then $x_\infty$ is
  stationary, meaning that $\subgrad\opt(x_\infty) = 0$.
\end{corollary}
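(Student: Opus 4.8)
The plan is to contradict the energy inequality of Theorem~\ref{theorem:monotone-trajectory}: if $\subgrad\opt$ were bounded away from $0$ at a cluster point, the trajectory would be forced to spend a fixed positive amount of time, infinitely often, in a region where $\norm{\subgrad\opt}$ is large, making $\int_0^\infty \norm{\subgrad\opt(x(\tau))}^2\,d\tau$ infinite. First I would collect the consequences of Theorem~\ref{theorem:monotone-trajectory}: $x(\cdot)$ exists on $\R_+$, stays in $\xdomain$, is bounded with $\norm{x(t)} \le R$ for some $R < \infty$, is $L$-Lipschitz in $t$ (take $L \ge 1$ without loss of generality), and satisfies $F(x(0)) \ge F(x(t)) + \int_0^t \norm{\subgrad\opt(x(\tau))}^2\,d\tau$ for all $t$, with $F = f + \regularizer$. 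Since $f$ is continuous and $\regularizer$ is closed convex (hence bounded below by an affine minorant), $F$ is bounded below on the compact set $\xdomain \cap R\ball$, say $F \ge -M$ there; combined with the energy inequality this gives $\int_0^\infty \norm{\subgrad\opt(x(\tau))}^2\,d\tau \le F(x(0)) + M < \infty$.

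Next I would show that $x \mapsto \norm{\subgrad\opt(x)} = \dist(0,\, G(x) + \normalcone_X(x))$ is lower semicontinuous. Given $x_n \to x$, write $\subgrad\opt(x_n) = a_n + b_n$ with $a_n \in G(x_n)$ and $b_n \in \normalcone_X(x_n)$. The sequence $\{a_n\}$ is bounded because $G$ is locally bounded (a consequence of Assumption~\ref{assumption:local-lipschitz} together with the local Lipschitzness of $\regularizer$), and since $0 \in \normalcone_X(x_n)$ we have $\norm{\subgrad\opt(x_n)} \le \norm{a_n}$, whence $\norm{b_n} \le \norm{a_n} + \norm{\subgrad\opt(x_n)} \le 2\norm{a_n}$ is bounded too. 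Passing to a subsequence along which $\norm{a_n + b_n} \to \liminf_n \norm{\subgrad\opt(x_n)}$ and $a_n \to a$, $b_n \to b$, outer semicontinuity of $G$ and of $\normalcone_X$ (Lemmas~\ref{lemma:frechet-subgradients-of-f} and~\ref{lemma:normal-outer-semicontinuity}) yields $a + b \in G(x) + \normalcone_X(x)$, so $\liminf_n \norm{\subgrad\opt(x_n)} = \norm{a + b} \ge \norm{\subgrad\opt(x)}$.

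Now the main argument: suppose for contradiction that $c := \norm{\subgrad\opt(x_\infty)} > 0$. By the lower semicontinuity just established there is $\delta > 0$ with $\norm{\subgrad\opt(x)} \ge c/2$ for all $x \in x_\infty + \delta\ball$. Let $t_n \to \infty$ with $x(t_n) \to x_\infty$. For $n$ large enough we have $\norm{x(t_n) - x_\infty} < \delta/2$ and $t_n \ge \delta/(2L)$, and then $L$-Lipschitz continuity of the path gives $\norm{x(t) - x_\infty} \le L\abs{t - t_n} + \norm{x(t_n) - x_\infty} < \delta$ whenever $\abs{t - t_n} \le \delta/(2L)$; hence $x(t) \in x_\infty + \delta\ball$ on an interval of length $\delta/L$ centered at each such $t_n$. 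Discarding terms so that these intervals are pairwise disjoint and contained in $\R_+$, we obtain $\int_0^\infty \norm{\subgrad\opt(x(\tau))}^2\,d\tau \ge \sum_n (\delta/L)(c/2)^2 = \infty$, contradicting the finiteness from the first step. Therefore $\subgrad\opt(x_\infty) = 0$.

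The only nonroutine ingredient is the lower semicontinuity of $x \mapsto \norm{\subgrad\opt(x)}$, and inside it the local boundedness of $G$ — which is exactly what the second-moment condition of Assumption~\ref{assumption:local-lipschitz} (and finiteness of $\regularizer$) provides, so I would expect that to be the main, though modest, obstacle. Everything else is the standard ``the path cannot move fast enough to escape a bad neighborhood'' argument, resting on the uniform Lipschitz bound of Theorem~\ref{theorem:monotone-trajectory} and the summability $\int_0^\infty \norm{\subgrad\opt(x(\tau))}^2\,d\tau < \infty$.
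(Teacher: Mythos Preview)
Your proof is correct and uses essentially the same ingredients as the paper's: finiteness of $\int_0^\infty \norm{\subgrad\opt(x(\tau))}^2\,d\tau$, the Lipschitz bound on the trajectory (so that it spends infinite Lebesgue measure near any cluster point), and outer semicontinuity of $x \mapsto G(x) + \normalcone_X(x)$. The paper organizes these ingredients as a direct argument rather than a contradiction: instead of first proving lower semicontinuity of $\norm{\subgrad\opt(\cdot)}$, it observes that since the neighborhood $\{t : \norm{x(t) - x_\infty} \le \epsilon_n\}$ has infinite measure while the integral of $\norm{\subgrad\opt(x(\cdot))}^2$ over it is finite, one can extract times $t_n$ with $x(t_n) \to x_\infty$ \emph{and} $\subgrad\opt(x(t_n)) \to 0$ simultaneously, and then invokes outer semicontinuity of $G + \normalcone_X$ directly to conclude $0 \in G(x_\infty) + \normalcone_X(x_\infty)$. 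Your lower-semicontinuity lemma is exactly this last step unpacked in the contrapositive direction, so the two proofs are reorderings of the same argument.
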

\begin{proof}
  For $\epsilon > 0$, define $\mc{T}_\epsilon(x_\infty) = \{t \in \R_+
  \mid \norm{x(t) - x_\infty} \le \epsilon\}$, and
  let $\mu$ denote Lebesgue measure on $\R$.
  Because the trajectory $x(\cdot)$ is Lipschitz,
  we have that $\mu(\mc{T}_\epsilon(x_\infty) \cap \openright{T}{\infty})
  = \infty$ for all $\epsilon > 0$ and $T < \infty$
  (cf.~\cite[Proposition 6.5.1]{AubinCe84}).
  Let $\epsilon_n, \delta_n$ be
  sequences of positive numbers converging to 0.  Because
  $f(x(t)) + \regularizer(x(t))$ converges to
  $f(x_\infty) + \regularizer(x_\infty)$ (the sequence is decreasing
  and $f + \regularizer$ is continuous), we have
  $\int \norm{\subgrad\opt(x(t))}^2 dt < \infty$. Moreover, there exist
  increasing $T_n$ such that
  \begin{equation*}
    \int_{\mc{T}_{\epsilon_n}(x_\infty) \cap [T_n, \infty)}
    \norm{\subgrad\opt(x(t))}^2 dt \le \delta_n.
  \end{equation*}
  As $\mu(\mc{T}_{\epsilon_n}(x_\infty) \cap [T_n, \infty)) = \infty$, there
  must exist an increasing sequence $t_n \ge T_n$, $t_n \in
  \mc{T}_{\epsilon_n}(x_\infty)$,
  such that
  $\norm{\subgrad\opt(x(t_n))}^2 \le \delta_n$. By construction
  $x(t_n) \to x_\infty$, and we have a subsequence
  $\subgrad\opt(x(t_n)) \to 0$. The outer semi-continuity of
  $x \mapsto G(x) + \normalcone_X(x)$ implies that $0 \in G(x_\infty)
  + \normalcone_X(x_\infty)$.
\end{proof}

\subsubsection{Proof of Theorem~\ref{theorem:monotone-trajectory}}
\label{sec:proof-monotone-trajectory}

Our argument proceeds in three main steps. For shorthand, we define
$F(x) = f(x) + \regularizer(x)$. Our first step shows that the function
$V(x) \defeq F(x) + \bigindic_X(x) - \inf_{y \in X} F(y)$ is a Lyapunov
function for the differential inclusion~\eqref{eqn:differential-inclusion},
where we take the function $W$ in Lemma~\ref{lemma:monotone-trajectory} to
be $W(x, v) = \norm{v}^2$.  Once we have this, then we can use the existence
result of Lemma~\ref{lemma:aubin-existence} to show that a solution
$x(\cdot)$ exists in a neighborhood of $0$. The uniqueness of trajectories
(Lemma~\ref{lemma:kunze-existence}) then implies that the trajectory $x$ is
non-increasing for $V$, which then---combined with the assumption of
coercivity of $F + \bigindic_X$---implies that the trajectory $x$ is bounded
and we can extend uniquely it to all of $\R_+$.

\paragraph{Part 1: A Lyapunov function}
To develop a Lyapunov function, we compute directional derivatives of $f +
\regularizer$.
\begin{lemma}[\cite{HiriartUrrutyLe93ab}, Chapter VI.1]
  \label{lemma:smallest-subgradient}
  Let $h$ be convex and $g\opt
  = \argmin_{g \in \partial h(x)} \{\norm{g}\}$. Then the directional
  derivative satisfies
  $h'(x; -g\opt) = -\norm{g\opt}^2$.
\end{lemma}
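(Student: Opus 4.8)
The plan is to express the one-sided directional derivative through the support function of the subdifferential and then invoke the variational (obtuse-angle) inequality that characterizes the Euclidean projection $g\opt = \project_{\partial h(x)}(0)$. First I would recall the standard fact (Chapter~VI.1 of~\cite{HiriartUrrutyLe93ab}) that for a convex function $h$ finite near $x$, the subdifferential $\partial h(x)$ is nonempty, closed, and convex, and the directional derivative is its support function,
\begin{equation*}
  h'(x; v) = \sup_{g \in \partial h(x)} \<g, v\>
  \quad \text{for all } v \in \R^d .
\end{equation*}
Taking $v = -g\opt$ turns the claim into showing $\inf_{g \in \partial h(x)} \<g, g\opt\> = \norm{g\opt}^2$, since $h'(x; -g\opt) = -\inf_{g \in \partial h(x)} \<g, g\opt\>$.

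Next I would use that $g\opt$ is, by definition, the Euclidean projection of the origin onto the closed convex set $\partial h(x)$. The first-order optimality condition for this projection is $\<0 - g\opt,\, g - g\opt\> \le 0$ for all $g \in \partial h(x)$, i.e.\ $\<g\opt, g\> \ge \norm{g\opt}^2$ for every $g \in \partial h(x)$. This gives $\inf_{g \in \partial h(x)} \<g, g\opt\> \ge \norm{g\opt}^2$, and since $g\opt \in \partial h(x)$ the value $\norm{g\opt}^2$ is attained at $g = g\opt$, so the infimum equals $\norm{g\opt}^2$ exactly. Combining with the previous display yields $h'(x; -g\opt) = -\norm{g\opt}^2$.

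There is no genuinely hard step here; the proof is two one-line invocations. The only points requiring a moment of care are (i) that $\partial h(x)$ is nonempty, closed, and convex, so that the projection $g\opt$ is well defined and lies in $\partial h(x)$ — which holds whenever $x$ is in the (relative) interior of $\dom h$, the only situation in which we apply the lemma — and (ii) that the directional derivative genuinely equals the support function of $\partial h(x)$, which is where finiteness/regularity of $h$ near $x$ enters. Both facts are standard and can be cited directly from~\cite[Ch.~VI]{HiriartUrrutyLe93ab}.
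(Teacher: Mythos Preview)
Your argument is correct and is the standard proof: represent $h'(x;\cdot)$ as the support function of $\partial h(x)$, then use the obtuse-angle inequality for the projection of $0$ onto the closed convex set $\partial h(x)$. Note, however, that the paper does not actually prove this lemma---it is stated as a citation to \cite[Ch.~VI.1]{HiriartUrrutyLe93ab}---so there is no ``paper's own proof'' to compare against; your write-up simply supplies the (routine) details behind the cited result.
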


Now, take $\subgrad\opt(x)$ as in the statement of the theorem and define
the Lyapunov-like function $V(x) = f(x) + \regularizer(x) + \bigindic_X(x) -
\inf_{y \in X} \{f(y) + \regularizer(y)\}$; we claim that
\begin{equation}
  \label{eqn:composite-lyapunov-function}
  V'(x; -\subgrad\opt(x))
  \le - \norm{\subgrad\opt(x)}^2.
\end{equation}
Before proving~\eqref{eqn:composite-lyapunov-function},
we note that it is
identical to that in Lemma~\ref{lemma:monotone-trajectory} on
monotone trajectories of differential inclusions. Thus
there exists a solution $x(\cdot)$
to the differential
inclusion $\dot{x} \in -G(x) - \normalcone_X(x)$ defined on $[0, T]$ for
some $T > 0$, where $x(\cdot)$ satisfies
\begin{equation}
  \label{eqn:intermediate-lyapunov}
  f(x(t)) + \regularizer(x(t)) + \bigindic_X(x(t))
  \le f(x(0)) + \regularizer(x(0))
  - \int_0^t \norm{\subgrad\opt(x(\tau))}^2 d\tau
\end{equation}
for all $t \in [0, T]$.  We return now to prove the
claim~\eqref{eqn:composite-lyapunov-function}.  Let $x \in \xdomain$ and
recall by Assumption~\ref{assumption:weak-convexity} that for all $\lambda
\ge \E[\lambda(\statrv, x)]$ that $f + \frac{\lambda}{2}
\norm{\cdot - x_0}^2$ is convex in an $\epsilon$-neighborhood of $x$. Now,
define $F_x(y) = f(y) + \regularizer(y) + \frac{\lambda}{2} \norm{y - x}^2$,
so that for $v$ with $\norm{v} = 1$ and $t \le \epsilon$, we have
\begin{equation*}
  |F(x + tv) - F(x)| \le |F_x(x + tv) - F(x)|
  + \frac{t^2 \lambda^2}{2} \norm{v}^2.
\end{equation*}
Because $\regularizer$ is convex and the error in the
approximation $f_x$ of $f$ is second-order, taking limits as
$u \to v, t \to 0$, we have for any fixed $x \in X$ that
\begin{align*}
  \lefteqn{\liminf_{t \downarrow 0, u \to v}
  \frac{F(x + t u) + \bigindic_X(x + t u) - F(x)}{t}} \\
  & = \liminf_{t \downarrow 0} \frac{F_x(x + tv)
  + \bigindic_X(x + tv)
  - F_x(x)}{t}
  = \sup_{g \in \partial f(x) + \partial \regularizer(x)
  + \normalcone_X(x)} \<g, v\>,
\end{align*}
where $F(x) = f(x) + \regularizer(x)$, and we have used
that the subgradient set of $y \mapsto F_x(y)$ at
$y = x$ is $\partial f(x) + \partial \regularizer(x)$. 
 Applying Lemma~\ref{lemma:smallest-subgradient} with
$v = -\subgrad\opt(x)$ gives
claim~\eqref{eqn:composite-lyapunov-function}.

\paragraph{Part 2: Uniqueness of trajectories}
Lemma~\ref{lemma:kunze-existence} shows that solutions to
$\dot{x} \in -G(x) - \normalcone_X(x)$ have unique trajectories almost
immediately.
By Assumption~\ref{assumption:weak-convexity}, for
any $x \in \xdomain$,
$f + \regularizer + \frac{\lambda}{2} \norm{\cdot}^2$ is
convex on the set $X \cap \{x + \epsilon \ball\}$
for all $\lambda \ge \E[\lambda(\statrv, x)]$.
Thus for points $x_1, x_2$ satisfying $\norm{x_i - x} \le \epsilon$ and
$g_i \in \partial f(x_i) + \partial \regularizer(x_i) + \normalcone_X(x_i)$,
\begin{equation*}
  \<g_1 + \lambda x_1 - g_2 - \lambda x_2,
  x_1 - x_2\> \ge 0
  ~~~ \mbox{or} ~~~
  \<-g_1 + g_2, x_1 - x_2\> \le \lambda \norm{x_1 - x_2}^2
\end{equation*}
because subgradients of convex functions are
increasing~\cite[Ch.~VI]{HiriartUrrutyLe93ab}.  Now, suppose that on an
interval $[0, T]$ the trajectory $x(t)$ satisfies $\norm{x(t)} \le B$, that
is, it lies in a compact subset of $\xdomain$. Then by taking a finite
subcovering of $B \ball \cap \xdomain$ as necessary, we may assume $f +
\regularizer + \frac{\lambda}{2} \norm{\cdot}^2$
is convex over $B \ball \cap \xdomain$.
This preceding display
is equivalent to the condition of Lemma~\ref{lemma:kunze-existence}, so
that for any $B$ and any interval $[0, T]$ for which the trajectory $x(t)$
satisfies $\norm{x(t)} \le B$ on $t \in [0, T]$, the trajectory
is unique. In particular, the Lyapunov
inequality~\eqref{eqn:intermediate-lyapunov} is satisfied on the interval
over which the trajectory $\dot{x} \in -G(x) - \normalcone_X(x)$ is defined.

\paragraph{Part 3: Extension to all times}
We argue that we may take $T \to \infty$. For any fixed
$T < \infty$, we know that
$f(x(T)) + \regularizer(x(T)) \le f(x(0)) + \regularizer(x(0))$, and the
coercivity of $f + \regularizer$ over $X$ implies that
there exists $B < \infty$ such that $\norm{x(t)} \le B$
on this trajectory (i.e.\ $t \in [0, T]$). The
compactness of $\partial f(x) + \partial \regularizer(x)$ for
$x \in X \cap \{y : \norm{y} \le B\}$ implies that
$\inf_g\{\norm{g} \mid g
\in \partial f(x) + \partial \regularizer(x) + \normalcone_X(x)\}$
is bounded (because $0 \in \normalcone_X(x)$).
The condition on existence of paths for all times $T$ in
Lemma~\ref{lemma:aubin-existence} applies.

The Lipschitz condition on $x(t)$ is an immediate consequence of the
boundedness of the subgradient sets
$\partial f(x) + \partial \regularizer(x)$ for bounded $x$.


\subsection{Almost sure convergence to stationary points}
\label{sec:as-convergence}

Thus far we have shown that the limit points of the stochastic model-based
iterations~\eqref{eqn:model-based-minimization} are asymptotically
equivalent to the differential inclusion~\eqref{eqn:differential-inclusion}
(Theorem~\ref{theorem:functional-convergence-prox-linear}) and that
solutions to the differential inclusion have certain uniqueness and
convergence properties (Theorem~\ref{theorem:monotone-trajectory}). Based on
those asymptotic equivalence results and convergence properties, this
section shows that cluster points of the iterates $x_k$ are stationary.
To provide a starting point, we state the main theorem of the section,
which applies to any sequence $x_k$ generated by
a model update~\eqref{eqn:model-based-minimization}
satisfying Conditions~\ref{item:convex-model}--\ref{item:upper-approximation}.
\begin{theorem}
  \label{theorem:stationary-cluster-points-extension}
  Let Assumptions~\ref{assumption:local-lipschitz},
  \ref{assumption:weak-convexity},
  \ref{assumption:boundedness-of-iterates}, and 
  \ref{assumption:objective-coercive} hold.  
  Then with probability 1,
  \begin{equation}
    \label{eqn:function-value-inclusion}
    \big[\liminf_k F(x_k), \limsup_k F(x_k)\big]
    \subseteq F(X\opt) = \{f(x): x\in X\opt\}.
  \end{equation} 
\end{theorem}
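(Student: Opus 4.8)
The plan is to run the ODE method to its conclusion: identify the cluster set of the iterates, use Theorem~\ref{theorem:functional-convergence-prox-linear} to populate it with trajectories of the inclusion $\dot x\in-G(x)-\normalcone_\xdomain(x)$, and use the descent inequality of Theorem~\ref{theorem:monotone-trajectory} to produce a stationary point at every value in $[\liminf_k F(x_k),\limsup_k F(x_k)]$. I would work throughout on the probability-$1$ event on which Assumption~\ref{assumption:boundedness-of-iterates} holds and the conclusions of Theorem~\ref{theorem:functional-convergence-prox-linear} and Lemma~\ref{lemma:summability-of-noise} are valid. \emph{Step 1 (cluster set).} From the representation~\eqref{eqn:stochastic-methods-as-approximation}, $\norm{x_{k+1}-x_k}\le\stepsize_k\norm{\meangradmap_{\stepsize_k}(x_k)}+\norm{\stepsize_k\error_{\stepsize_k}(x_k;\statrv_k)}$; the first term tends to $0$ since $\stepsize_k\to0$ and $\meangradmap$ is bounded along the (bounded) iterates by Lemmas~\ref{lemma:gradmap-bound} and~\ref{lemma:property-of-Lipschitz-constant}, and the second tends to $0$ because $\sum_k\stepsize_k\error_{\stepsize_k}(x_k;\statrv_k)$ converges (Lemma~\ref{lemma:summability-of-noise}). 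Hence $\norm{x_{k+1}-x_k}\to0$, so, with $F$ uniformly continuous on the bounded range of the iterates, $F(x_{k+1})-F(x_k)\to0$; consequently the cluster set $L\defeq\bigcap_K\overline{\{x_j:j\ge K\}}$ is nonempty, compact and connected, $\dist(x(s),L)\to0$ as $s\to\infty$ for the interpolation $x(\cdot)$ of~\eqref{eqn:interpolation}, and $F(L)=[v_-,v_+]$ with $v_-\defeq\liminf_k F(x_k)$, $v_+\defeq\limsup_k F(x_k)$ (the subsequential limits of a bounded real sequence with vanishing increments fill the whole interval).

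\emph{Step 2 (trajectories stay in $L$; the degenerate case).} Given $\hat x\in L$, pick $\tau_j\to\infty$ with $x(\tau_j)\to\hat x$. By Theorem~\ref{theorem:functional-convergence-prox-linear} a subsequence of $\{x^{\tau_j}(\cdot)\}$ converges in $\cont(\R_+,\R^d)$ to a solution $\bar x$ of $\dot x\in-G(x)-\normalcone_\xdomain(x)$ with $\bar x(0)=\hat x$; since $\bar x(t)=\lim_j x(\tau_j+t)$ with $\tau_j+t\to\infty$, we get $\bar x(t)\in L$ for all $t\ge0$. Thus $F$ is non-increasing along $\bar x$ (Theorem~\ref{theorem:monotone-trajectory}) while $F(\bar x(t))\in[v_-,v_+]$. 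If $v_-=v_+=:c$, then $F\equiv c$ on $L$, hence $F(\bar x(\cdot))\equiv c$, so Corollary~\ref{corollary:constant-is-stationary} gives $\subgrad\opt(\hat x)=0$, i.e.\ $c\in F(\stationary)$ and~\eqref{eqn:function-value-inclusion} holds.

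\emph{Step 3 (the generic case, by contradiction).} Assume $v_-<v_+$ and $[v_-,v_+]\not\subseteq F(\stationary)$. Since $\stationary$ is closed (outer semicontinuity of $x\mapsto\partial f(x)+\partial\regularizer(x)+\normalcone_\xdomain(x)$) and $\stationary\cap\{F\le v_+\}$ is bounded by Assumption~\ref{assumption:objective-coercive}, the set $F(\stationary)\cap(-\infty,v_+]$ is closed, so its complement meets $(v_-,v_+)$ in a nonempty open set; I would pick a nondegenerate closed interval $[a,b]\subset(v_-,v_+)$ and $\eta_0>0$ with $[a-\eta_0,b+\eta_0]\cap F(\stationary)=\emptyset$. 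Fix $B<\infty$ bounding $\sup_k\norm{x_k}$ and $L$, set $S\defeq\{x\in\xdomain:a-\eta_0\le F(x)\le b+\eta_0,\ \norm{x}\le B+1\}$ (compact, disjoint from $\stationary$), and note $\rho\defeq\inf_{x\in S}\norm{\subgrad\opt(x)}>0$ by lower semicontinuity of $\norm{\subgrad\opt(\cdot)}$ (which follows from outer semicontinuity and local boundedness of $G+\normalcone_\xdomain$) together with its strict positivity on $S$. The key ingredient is a standard tracking estimate: for each $T,\epsilon>0$ there is $\tau$ such that for $t\ge\tau$ the path $x(t+\cdot)$ stays within $\epsilon$ on $[0,T]$ of the \emph{unique} (Lemma~\ref{lemma:kunze-existence}, as used in Theorem~\ref{theorem:monotone-trajectory}) trajectory of the inclusion started at $x(t)$; this comes from the relative compactness in Theorem~\ref{theorem:functional-convergence-prox-linear} plus uniqueness, which also yields continuous dependence on the initial point. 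Combined with Theorem~\ref{theorem:monotone-trajectory}, this gives, for $t$ large: (i) $F(x(\cdot))$ increases by at most $o_t(1)$ over any window of length $\le T$ (the tracked trajectory never increases $F$); and (ii) if $F(x(s))\in[a,b]$ for all $s\in[t,t+T]$ then $F(x(t+T))\le F(x(t))-\frac{\rho^2}{2}T$ (the tracked trajectory then remains in $S$, where $F$ decreases at rate $\ge\rho^2$). Finally, $F(x_k)\le a$ infinitely often and $\ge b$ infinitely often (as $v_-\le a<b\le v_+$), so with $F(x_{k+1})-F(x_k)\to0$ there are index intervals $[m_i,n_i]$, $i\to\infty$, with $F(x_{m_i})\to a$, $F(x_{n_i})\to b$, and $F(x(s))\in[a-\eta_0,b+\eta_0]$ throughout $[t_{m_i},t_{n_i}]$; applying (i) when $t_{n_i}-t_{m_i}<1$ and chaining (ii) over unit windows otherwise gives $F(x_{n_i})-F(x_{m_i})\le o(1)-\frac{\rho^2}{2}\lfloor t_{n_i}-t_{m_i}\rfloor\le o(1)$, contradicting $F(x_{n_i})-F(x_{m_i})\to b-a>0$. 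Hence $[v_-,v_+]\subseteq F(\stationary)$.

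\emph{Main obstacle.} The crux is the tracking estimate in Step 3. Theorem~\ref{theorem:functional-convergence-prox-linear} only asserts that limit points of the time-shifts $x^{\tau_k}$ solve the inclusion; promoting this to uniform-on-compacts approximation of the specific trajectory emanating from the current iterate — and the continuous dependence on the initial point this needs — is where the weak-convexity-driven uniqueness of Theorem~\ref{theorem:monotone-trajectory} does the essential work, and some care is required to absorb the interpolation-versus-grid discrepancies into the $o(1)$ and $\eta_0$ slack. Once this estimate is in hand, the upcrossing combinatorics and the degenerate case are routine.
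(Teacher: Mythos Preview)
Your approach is correct, but it is genuinely different from the paper's. The paper never establishes the tracking (asymptotic pseudo-trajectory) estimate you lean on in Step~3. Instead, it works purely with subsequential limits: Lemma~\ref{lemma:extreme-value-stationary-point} shows that if $x(h_k)\to x_\infty\notin\stationary$, then for any $\tau_k\to\tau>0$ one has $\liminf_k F(x(h_k-\tau_k))>F(x_\infty)>\limsup_k F(x(h_k+\tau_k))$, by passing to a convergent subsequence of the shifted paths and invoking the monotone-trajectory theorem together with Corollary~\ref{corollary:constant-is-stationary}. A separate lemma (Lemma~\ref{lemma:lower-bound-time-difference}) shows the interpolated path cannot cover a fixed spatial distance in vanishing time. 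The contradiction then comes from a last-entrance/first-exit upcrossing construction at levels $y\low<y\high$ with $y\high\notin F(\stationary)$: along the subsequence where $x(h_k\high)\to x_\infty$, the strict-decrease lemma forces $F(x(h_k\high-\tau))>y\high$ for any fixed $\tau$, while the upcrossing corridor forces $F\le y\high$ just before $h_k\high$ --- and the speed lemma pins down a positive $\tau$ at which this occurs.

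The trade-off is as follows. Your route is more quantitative (a uniform descent rate $\rho^2$ on the non-stationary slab) and fits the Bena\"im--Hofbauer--Sorin framework; it also makes the role of uniqueness very explicit, since you need continuous dependence on initial data to promote the subsequential statement of Theorem~\ref{theorem:functional-convergence-prox-linear} to the uniform tracking estimate. The paper's route is a bit more economical: it uses uniqueness only to identify each subsequential limit with the monotone solution, never needs continuous dependence, and replaces your chained descent bound by a single topological observation about where the path must have been a fixed time $\tau$ before hitting $y\high$. Both arguments require the same upcrossing combinatorics and the same Lyapunov input from Theorem~\ref{theorem:monotone-trajectory}; they diverge in how they convert ``limit points are solutions'' into ``the path itself eventually behaves like a solution.''
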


Let us discuss the theorem briefly.
Theorem~\ref{theorem:informal-theorem-stationary-cluster-points} is an
immediate consequence of the theorem, as
Assumptions~\ref{assumption:boundedness-of-iterates}
and~\ref{assumption:objective-coercive} are trivial when $\xdomain$ is
compact. To illustrate the theorem, we also establish
convergence of the iterates of $x_k$ to the stationary set $X\opt$ under the
weak Sard-type Assumption~\ref{assumption:countable-regularity}, giving
Corollary~\ref{corollary:informal} as a consequence.
\begin{corollary}
  \label{corollary:stationary-cluster-points-extension}
  Let
  Assumptions~\ref{assumption:local-lipschitz}--\ref{assumption:objective-coercive}
  hold. With probability 1, all cluster points of the sequence
  $\{x_k\}_{k=1}^\infty$ belong to the stationary set $X\opt$, and $F(x_k) =
  f(x_k) + \regularizer(x_k)$ converges.
\end{corollary}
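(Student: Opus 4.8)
The plan is to layer the Sard-type Assumption~\ref{assumption:countable-regularity} on top of the ``master'' convergence result Theorem~\ref{theorem:stationary-cluster-points-extension}, first to force convergence of the objective values and then to upgrade the conclusion ``the \emph{values} $F(x_k)$ accumulate only in $F(X\opt)$'' to ``the cluster \emph{points} of $\{x_k\}$ lie in $X\opt$.'' First I would settle value convergence: by Theorem~\ref{theorem:stationary-cluster-points-extension}, with probability one $[\liminf_k F(x_k),\limsup_k F(x_k)]\subseteq F(X\opt)$, and since the iterates are bounded (Assumption~\ref{assumption:boundedness-of-iterates}) and $F=f+\regularizer$ is continuous, both endpoints are finite. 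Because $(F(X\opt))^c$ is dense in $\R$ (Assumption~\ref{assumption:countable-regularity}), the set $F(X\opt)$ contains no interval of positive length, so this interval must reduce to a single point; hence $F(x_k)\to F\opt$ for some $F\opt\in F(X\opt)$, which is the second assertion of the corollary.

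For the cluster points, I would argue by contradiction on the probability-one event on which the iterates are bounded, $F(x_k)\to F\opt$, and the functional convergence of Theorem~\ref{theorem:functional-convergence-prox-linear} holds. Suppose $x_\infty$ is a cluster point with $\norm{\subgrad\opt(x_\infty)}=2c>0$. Since $x\mapsto G(x)+\normalcone_X(x)$ is o.s.c.\ and locally bounded (Lemmas~\ref{lemma:gradmap-bound} and~\ref{lemma:property-of-Lipschitz-constant}), the map $x\mapsto\norm{\subgrad\opt(x)}$ is lower semicontinuous, so there is $\rho>0$ with $\norm{\subgrad\opt(x)}\ge c$ on $x_\infty+\rho\ball$. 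Choose $t_{k_j}\to\infty$ with $x(t_{k_j})\to x_\infty$ (possible since $\sum_k\stepsize_k=\infty$); after passing to a subsequence, Theorem~\ref{theorem:functional-convergence-prox-linear} yields $x^{t_{k_j}}(\cdot)\to\bar x(\cdot)$ in $\cont(\R_+,\R^d)$, where $\bar x(0)=x_\infty$ and $\bar x$ solves $\dot x\in-G(x)-\normalcone_X(x)$. By Theorem~\ref{theorem:monotone-trajectory}, $\bar x$ is $L$-Lipschitz for some finite $L$ and satisfies the Lyapunov inequality, so with $T=\rho/L$ the trajectory stays in $x_\infty+\rho\ball$ on $[0,T]$ and
\begin{equation*}
  F(\bar x(T))\le F(\bar x(0))-\int_0^T\norm{\subgrad\opt(\bar x(\tau))}^2\,d\tau\le F\opt-c^2 T<F\opt.
\end{equation*}
But $x^{t_{k_j}}\to\bar x$ uniformly on $[0,T]$ gives $x(t_{k_j}+T)\to\bar x(T)$, while $\norm{x_{k+1}-x_k}=\stepsize_k\norm{y_k}\to0$ and continuity of $F$ force $F(x(t_{k_j}+T))\to F\opt$; this contradicts the display, so $\subgrad\opt(x_\infty)=0$, i.e.\ $x_\infty\in X\opt$.

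I expect the delicate step to be the second one: converting non-stationarity of a single cluster point into a \emph{uniform} decrease of the objective that persists in the limit. This needs the lower semicontinuity of $\norm{\subgrad\opt(\cdot)}$ (to obtain a whole neighborhood on which $\norm{\subgrad\opt}$ is bounded below), a uniform Lipschitz bound on the limiting trajectory (so it cannot escape that neighborhood before time $T$), and the transfer of the strict decrease of $F$ along $\bar x$ back to the discrete values $F(x_k)$ via the $\cont(\R_+,\R^d)$ convergence of Theorem~\ref{theorem:functional-convergence-prox-linear}. The remaining bookkeeping---finiteness of the $F$-values, continuity of $F$ on the bounded region, and comparison of $F$ along the interpolation with $F(x_k)$ using $\stepsize_k\to0$ and $\sup_k\norm{y_k}<\infty$---is routine.
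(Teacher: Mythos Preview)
Your first step, deducing convergence of $F(x_k)$ from Theorem~\ref{theorem:stationary-cluster-points-extension} and the density of $(F(X\opt))^c$, is exactly the paper's argument.

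For the cluster-point step the paper simply invokes Lemma~\ref{lemma:extreme-value-stationary-point}, which is proved as part of Theorem~\ref{theorem:stationary-cluster-points-extension}: if $x(h_k)\to x_\infty\notin X\opt$, then for any $\tau>0$ one has $\limsup_k F(x(h_k+\tau))<F(x_\infty)$, immediately contradicting $F(x_k)\to F\opt$. The mechanism behind that lemma is the same as yours---pass to a limiting trajectory via Theorem~\ref{theorem:functional-convergence-prox-linear} and use Theorem~\ref{theorem:monotone-trajectory}---but the paper extracts the strict drop qualitatively through Corollary~\ref{corollary:constant-is-stationary} (a trajectory through a non-stationary point cannot keep $F$ constant), whereas you quantify it by first localizing via lower semicontinuity of $\norm{\subgrad\opt(\cdot)}$ and then integrating the Lyapunov inequality. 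Your route is a legitimate alternative and avoids citing the intermediate lemma; the paper's route is shorter because the lemma is already available from the proof of the theorem you just applied.

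Two small technical remarks on your write-up. First, $\norm{x_{k+1}-x_k}$ is not $\stepsize_k\norm{y_k}$ but $\stepsize_k\norm{y_k+\noise_{k+1}}$; you still get $\norm{x_{k+1}-x_k}\to 0$ because $\stepsize_k\noise_k\to 0$ (condition~\ref{item:noise-summation} of Theorem~\ref{theorem:functional-convergence-general} and Lemma~\ref{lemma:summability-of-noise}), so the conclusion $F(x(t))\to F\opt$ stands. Second, the lower semicontinuity of $\norm{\subgrad\opt(\cdot)}$ does not follow from Lemmas~\ref{lemma:gradmap-bound} and~\ref{lemma:property-of-Lipschitz-constant} as cited; it follows from outer semicontinuity of $G(\cdot)+\normalcone_X(\cdot)$ together with local boundedness of $G$ (so the minimum-norm selections stay bounded and subsequential limits land in $G(x)+\normalcone_X(x)$). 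With those fixes your argument is complete.
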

\begin{proof}
  By Assumption~\ref{assumption:countable-regularity} (that $(F(X\opt))^c$
  is dense),
  Theorem~\ref{theorem:stationary-cluster-points-extension} implies that
  $F(x_k)$ converges. That all cluster points of $x_k$ belong to $X\opt$
  follows from Lemma~\ref{lemma:extreme-value-stationary-point}
  to come.
\end{proof}

\paragraph{Conditions for boundedness of the iterates}
Key to our theorems is the boundedness of the iterates $x_k$,
so it is important to give sufficient conditions that
Assumption~\ref{assumption:boundedness-of-iterates} holds
even when $\xdomain$ is unbounded.
We may develop examples by
considering the joint properties of the regularizer $\regularizer$ and
objectives $f(x; \statrv)$ in the stochastic updates of our methods.  We
mention two such examples, focusing for simplicity
on the stochastic subgradient method (Ex.~\ref{example:sgd}, using
subgradient $\subgrad(x;\statval) \in \partial f(x;\statval)$) in the
unconstrained case $\xdomain = \R^d$.
We first assume that $\regularizer(x) = \frac{\lambda}{2} \norm{x}^2$,
i.e.\ $\ell_2$ or Tikhonov regularization, common in
statistical learning and inverse problems. In addition, let us assume that
$f(x; \statval)$ is $L(\statval)$-Lipschitz in
$x$, where $L \defeq \E[L(\statrv)^2]^\half < \infty$, so that
$\norm{\subgrad(x; \statval)} \le L(\statval)$. This regularization is
sufficient
to guarantee boundedness:
\begin{observation}
  \label{observation:boundedness-tikhonov}
  Let the conditions of the preceding paragraph hold.
  Assume that $\E[L(\statrv)^2] < \infty$. Then
  with probability 1, $\sup_k \norm{x_k} < \infty$.
\end{observation}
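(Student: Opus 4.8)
The plan is to run a supermartingale argument directly on the squared norm $\norm{x_k}^2$, exploiting the strong convexity that Tikhonov regularization injects into the update. Recall that with $\regularizer(x) = \frac{\lambda}{2}\norm{x}^2$ and $\xdomain = \R^d$, the stochastic subgradient step~\eqref{eqn:model-based-minimization} has a closed form: $x_{k+1} = \frac{1}{1 + \stepsize_k \lambda}(x_k - \stepsize_k \subgrad(x_k;\statrv_k))$, since we must solve $\min_y \{\<\subgrad(x_k;\statrv_k), y - x_k\> + \frac{\lambda}{2}\norm{y}^2 + \frac{1}{2\stepsize_k}\norm{y - x_k}^2\}$. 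Setting $\theta_k \defeq 1/(1+\stepsize_k\lambda) \in (0,1)$, expanding the square, and writing $g_k = \subgrad(x_k;\statrv_k)$ with $\norm{g_k} \le L(\statrv_k)$, I would obtain
\begin{equation*}
  \norm{x_{k+1}}^2 = \theta_k^2\left(\norm{x_k}^2 - 2\stepsize_k \<g_k, x_k\> + \stepsize_k^2 \norm{g_k}^2\right)
  \le \theta_k^2 \norm{x_k}^2 - 2\theta_k^2 \stepsize_k \<g_k, x_k\> + \stepsize_k^2 L(\statrv_k)^2.
\end{equation*}
Taking conditional expectation given $\mc{F}_k$, the martingale cross term has mean $\E[\<g_k, x_k\>\mid\mc{F}_k] = \<\E[g_k\mid\mc{F}_k], x_k\>$, which I do not control in sign, so the cleaner route is to first bound $|\<g_k,x_k\>| \le L(\statrv_k)\norm{x_k}$ crudely, but that reintroduces a linear-in-$\norm{x_k}$ term. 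Instead, I would use Young's inequality $2\stepsize_k|\<g_k,x_k\>| \le \stepsize_k \lambda \norm{x_k}^2 \cdot c + \frac{\stepsize_k}{c\lambda}L(\statrv_k)^2$ for a suitable constant, choosing the split so that the $\norm{x_k}^2$ coefficient, together with the contraction factor $\theta_k^2 = (1+\stepsize_k\lambda)^{-2} \le 1 - \stepsize_k\lambda$ for small $\stepsize_k$, still leaves a net coefficient $\le 1$ on $\norm{x_k}^2$ (in fact strictly contractive), with the leftover terms being $O(\stepsize_k^2 L(\statrv_k)^2)$ plus a $O(\stepsize_k)$ term that must be handled.

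The key step is to get the recursion into the form
\begin{equation*}
  \E[\norm{x_{k+1}}^2 \mid \mc{F}_k] \le (1 - \stepsize_k \lambda + o(\stepsize_k))\norm{x_k}^2 + \stepsize_k^2 L(\statrv_k)^2 + C\stepsize_k^2 \E[L(\statrv_k)^2],
\end{equation*}
or, if one cannot escape an $O(\stepsize_k)$ additive term, a form like $\E[\norm{x_{k+1}}^2\mid\mc{F}_k] \le (1-\frac{\lambda}{2}\stepsize_k)\norm{x_k}^2 + C\stepsize_k L^2$ valid whenever $\norm{x_k}^2$ exceeds a threshold $R^2 = 2C L^2/\lambda$. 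With this threshold form in hand, I would invoke the Robbins--Siegmund almost supermartingale convergence theorem~\cite{RobbinsSi71} applied to $V_k = \norm{x_k}^2$: outside the ball of radius $R$ the drift is strictly negative (after accounting for the summable $\sum\stepsize_k^2\E[L^2] < \infty$ perturbation), so $V_k$ cannot escape to infinity; a standard argument (decompose into the supermartingale part plus the summable remainder, apply the convergence theorem on excursions) shows $\limsup_k \norm{x_k}^2 \le R^2 < \infty$ almost surely.

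The main obstacle I anticipate is controlling the indefinite-sign term $\<\E[g_k\mid\mc{F}_k], x_k\>$: since $f(\cdot;\statval)$ is nonconvex, $\E[g_k\mid\mc{F}_k]$ need not point "outward," and it is only bounded in norm by $L$, so the honest bound is $|\<\E[g_k\mid\mc{F}_k],x_k\>| \le L\norm{x_k}$, contributing $+2\stepsize_k L\norm{x_k}$ after the expectation. The resolution is that this linear term is dominated by the quadratic contraction $-\stepsize_k\lambda\norm{x_k}^2$ coming from $\theta_k^2$ precisely when $\norm{x_k} > 2L/\lambda$, which is exactly the threshold structure needed for Robbins--Siegmund; so the argument closes, but requires care to keep the $o(\stepsize_k)$ errors from the expansion of $\theta_k^2$ summable-after-squaring and genuinely lower order. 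A secondary point is measurability and integrability of the martingale difference sequence $\norm{g_k}^2$ and the noise, which is immediate from $\E[L(\statrv)^2] < \infty$ and the i.i.d.\ sampling, matching the setup used in Lemma~\ref{lemma:summability-of-noise}.
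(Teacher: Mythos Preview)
Your approach is genuinely different from the paper's, and the difference is instructive. The paper works with $\norm{x_k}$ rather than $\norm{x_k}^2$: from the same closed-form update you wrote, the triangle inequality gives the one-sided recursion $\norm{x_{k+1}} \le \theta_k \norm{x_k} + \theta_k \stepsize_k L(\statrv_k)$ with no indefinite cross term at all. Unrolling yields $\norm{x_{k+1}} \le (\prod_i \theta_i)\norm{x_1} + Z_k$ with $Z_k = \sum_{i=1}^k \stepsize_i L(\statrv_i) \prod_{j=i}^k \theta_j$, and the paper shows $Z_k$ is a.s.\ bounded by checking $\sup_k \E[Z_k] < \infty$ via a scalar recursion and then proving $Z_k - \E[Z_k] \to 0$ a.s.\ through an $L^2$-martingale argument (a Kronecker/SLLN-type result). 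This sidesteps entirely the obstacle you correctly flagged.

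Your squared-norm route can in principle be made to work, but the sketch has a real gap at the Robbins--Siegmund step. The recursion you arrive at, $\E[V_{k+1}\mid\mc{F}_k] \le (1 - c\stepsize_k)V_k + C\stepsize_k$, has an additive term of order $\stepsize_k$, and $\sum_k \stepsize_k = \infty$, so Robbins--Siegmund does not apply as stated: it requires the additive perturbation to be summable. Your fallback ``threshold plus excursions'' idea is the right instinct, but it is not a corollary of Robbins--Siegmund and is not quite as standard as you suggest; to push it through you would need to control the size of the jump each time $V_k$ exits the ball of radius $R$ (this uses $\stepsize_k L(\statrv_k) \to 0$ a.s., which does follow from $\sum_k \stepsize_k^2 \E[L(\statrv_k)^2] < \infty$) and then argue carefully about the supermartingale on each excursion. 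Even done correctly, this yields only $\sup_k V_k < \infty$ a.s., not the sharp $\limsup_k V_k \le R^2$ you claimed. The paper's linear-in-norm unrolling is both shorter and avoids the whole detour; it is worth internalizing that squaring introduced the problematic $\langle g_k, x_k\rangle$ term, and simply not squaring makes it disappear.
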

\noindent
We provide the proof of Observation~\ref{observation:boundedness-tikhonov}
in
Appendix~\ref{sec:proof-boundedness-tikhonov}.
More quickly growing regularization
functions $\regularizer$ also yield bounded iterates. We
begin with a definition.
\begin{definition}
  \label{definition:coercivity}
  A function $\regularizer$ is \emph{$\beta$-coercive} if
  $\lim_{\norm{x} \to \infty} \regularizer(x) / \norm{x}^\beta = \infty$,
  and it is \emph{$(\lambda, \beta)$-regularly coercive} if it is 
  $\beta$-coercive and $\regularizer(x) \ge \regularizer(\lambda x)$ for 
  $\norm{x}$ large.
\end{definition}
\begin{observation}
  \label{observation:coercive-bounded}
  Let $\regularizer$ be $(\beta, \lambda)$-coercive with $\lambda \in
  \openright{0}{1}$. Assume that for all $\statval \in \statdomain$, $x
  \mapsto f(x; \statval)$ is $L (1 +
  \norm{x}^\nu)$-Lipschitz in a neighborhood of $x$, where $L < \infty$ is
  some constant, and $\nu < \beta - 1$. Then $\sup_k \norm{x_k} < \infty$.
\end{observation}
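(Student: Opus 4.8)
\emph{Proof proposal.} The plan is to prove a \emph{deterministic} drift bound: along \emph{every} realization of the data $\statrv_1,\statrv_2,\dots$ the iterates stay in a fixed ball, which yields the statement with probability one. I would split the argument into three steps.

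\emph{Step 1: reduction to a proximal step.} For the stochastic subgradient method (Example~\ref{example:sgd}) with $\xdomain=\R^d$ the model is linear, $f_{x_k}(y;\statrv_k)=f(x_k;\statrv_k)+\<\subgrad(x_k;\statrv_k),y-x_k\>$, so completing the square in~\eqref{eqn:model-based-minimization} gives
\begin{equation*}
  x_{k+1}=\argmin_{y\in\R^d}\Big\{\regularizer(y)+\frac{1}{2\stepsize_k}\norm{y-z_k}^2\Big\},
  \qquad z_k\defeq x_k-\stepsize_k\,\subgrad(x_k;\statrv_k).
\end{equation*}
Since $f(\cdot;\statval)$ is $L(1+\norm{x}^\nu)$-Lipschitz near $x$ and $\subgrad(x;\statval)\in\partial f(x;\statval)$, we have $\norm{\subgrad(x_k;\statrv_k)}\le L(1+\norm{x_k}^\nu)$, hence $\norm{z_k}\le\norm{x_k}+\stepsize_k L(1+\norm{x_k}^\nu)$. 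As $\regularizer$ is closed, convex and coercive it attains its infimum, so after translating coordinates and subtracting a constant we may assume $\regularizer\ge0$, $\argmin\regularizer=\{0\}$, $\regularizer(0)=0$; this only enlarges $L$ by a constant, using $\norm{x+x^\star}^\nu\le C(1+\norm{x}^\nu)$. I take $\nu\ge0$ throughout (the case of interest), so that $\beta>1$.

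\emph{Step 2: a proximal contraction.} With $w:=x_{k+1}$, optimality gives $\stepsize_k^{-1}(z_k-w)\in\partial\regularizer(w)$; testing the subgradient inequality for $\regularizer$ at the origin, $0=\regularizer(0)\ge\regularizer(w)-\stepsize_k^{-1}\<z_k-w,w\>$, i.e.\ $\<z_k-w,w\>\ge\stepsize_k\,\regularizer(w)$. Combining with $\<z_k-w,w\>\le\norm{z_k}\norm{w}-\norm{w}^2$ yields $\norm{z_k}\ge\norm{w}+\stepsize_k\,\regularizer(w)/\norm{w}$, and in particular (using only $\regularizer(w)\ge0$) the non-expansiveness $\norm{x_{k+1}}\le\norm{z_k}$. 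By $\beta$-coercivity there is $R_a<\infty$ with $\regularizer(w)\ge\norm{w}^\beta$ for $\norm{w}\ge R_a$, so
\begin{equation}
  \label{eqn:obs-coercive-key}
  \norm{x_{k+1}}+\stepsize_k\norm{x_{k+1}}^{\beta-1}\ \le\ \norm{z_k}\ \le\ \norm{x_k}+\stepsize_k L(1+\norm{x_k}^\nu)
  \qquad\text{whenever }\norm{x_{k+1}}\ge R_a.
\end{equation}
Because $\nu<\beta-1$ (and $\beta>1$), fix $R_b<\infty$ with $r^{\beta-1}\ge 2L(1+r^\nu)$ for $r\ge R_b$, and set $R:=\max\{R_a,R_b\}$. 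If $\norm{x_k}\ge R$ but $\norm{x_{k+1}}>\norm{x_k}$, then~\eqref{eqn:obs-coercive-key} applies and $\norm{x_{k+1}}^{\beta-1}>\norm{x_k}^{\beta-1}\ge 2L(1+\norm{x_k}^\nu)$, so~\eqref{eqn:obs-coercive-key} forces $\norm{x_{k+1}}<\norm{x_k}-\stepsize_k L(1+\norm{x_k}^\nu)\le\norm{x_k}$, a contradiction; hence $\norm{x_k}\ge R$ implies $\norm{x_{k+1}}\le\norm{x_k}$.

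\emph{Step 3: conclusion, and the main obstacle.} When $\norm{x_k}<R$, non-expansiveness gives $\norm{x_{k+1}}\le\norm{z_k}\le R+\stepsize_k L(1+R^\nu)\le R+\bar\stepsize\,L(1+R^\nu)=:R'$, where $\bar\stepsize:=\sup_k\stepsize_k<\infty$ because $\sum_k\stepsize_k^2<\infty$ forces $\stepsize_k\to0$. Putting $B:=\max\{\norm{x_0},R'\}$, an induction gives $\norm{x_k}\le B$ for all $k$: if $\norm{x_k}\le B$, then either $\norm{x_k}\ge R$, so $\norm{x_{k+1}}\le\norm{x_k}\le B$, or $\norm{x_k}<R$, so $\norm{x_{k+1}}\le R'\le B$. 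Hence $\sup_k\norm{x_k}\le B<\infty$ along every realization. The crux of the proof is the estimate~\eqref{eqn:obs-coercive-key}: one must extract the power $\norm{x_{k+1}}^{\beta-1}$ from the optimality condition for the proximal step together with the $\beta$-coercivity of $\regularizer$, and then verify that this growth strictly dominates the $O(\norm{x_k}^\nu)$ growth of $\partial f$ — which is exactly where $\nu<\beta-1$ and $\beta>1$ enter. The monotonicity requirement $\regularizer(x)\ge\regularizer(\lambda x)$ in Definition~\ref{definition:coercivity} is inessential for the subgradient method beyond making the normalization $\argmin\regularizer=\{0\}$ transparent; the quantitative $\beta$-coercivity is what produces the contraction.
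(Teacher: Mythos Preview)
Your proof is correct and takes a genuinely different route from the paper's. The paper works with the \emph{values} $\regularizer(x_k)$: it first assumes $\regularizer(x_{k+1}) \ge \regularizer(x_k)$, uses the optimality condition~\eqref{eqn:one-step-shrink-regularizer} to bound the subgradient $\norm{v_{k+1}} \le 2\norm{g_k}$ for $v_{k+1}\in\partial\regularizer(x_{k+1})$, and then combines this with the coercivity-induced lower bound $\norm{v}\ge 4L\norm{y}^{\beta-1}$ to force $\norm{x_{k+1}}\le\max\{B,\lambda\norm{x_k}\}$; the $(\lambda,\beta)$-regularity condition $\regularizer(x)\ge\regularizer(\lambda x)$ is then invoked to close the induction on $\regularizer(x_k)$. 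Your argument instead normalizes so that $0\in\argmin\regularizer$, tests the subgradient inequality for $\regularizer$ at the origin to obtain the quantitative contraction $\norm{x_{k+1}}+\stepsize_k\norm{x_{k+1}}^{\beta-1}\le\norm{z_k}$, and runs the induction directly on $\norm{x_k}$. Your approach is cleaner and, as you note, shows that the monotonicity clause $\regularizer(x)\ge\regularizer(\lambda x)$ in Definition~\ref{definition:coercivity} is not needed here---pure $\beta$-coercivity with $\nu<\beta-1$ suffices. (One cosmetic point: you write $\argmin\regularizer=\{0\}$, but you only use and need $0\in\argmin\regularizer$.)
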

\noindent
\noindent
We provide the proof of Observation~\ref{observation:coercive-bounded} in 
Appendix~\ref{sec:proof-observation-coercive-bounded}.


\newcommand{\pre}{^-}
\newcommand{\subs}{^+}
\newcommand{\maxgradmap}{M}

\subsubsection{Proof of Theorem~\ref{theorem:stationary-cluster-points-extension}}
\label{sec:proof-stationary-cluster-points-extension}

We prove the theorem using two intermediate results: in the first part
(Lemma~\ref{lemma:extreme-value-stationary-point}), we show that if a
cluster point $x_\infty$ of the sequence $x_k$ is non-stationary, then the
iterates $F(x_k)$ must decrease through $F(x_\infty)$ infinitely often. A
consequence we show is that $\limsup_k F(x_k)$ and $\liminf_k F(x_k)$ belong
to $F(\xdomain\opt)$. We then show
(Lemma~\ref{lemma:lower-bound-time-difference}) that the
interpolated path $x(\cdot)$ of the iterates $x_k$
(recall definition~\eqref{eqn:interpolation}) cannot move too
quickly (Lemma~\ref{lemma:lower-bound-time-difference}). We finally use
this to show that all limting values of $f(x_k) + \regularizer(x_k)$ belong
to $F(X\opt)$.
In the statements of the lemmas, we implicitly assume
all of the conditions of the theorem (i.e.\
Assumptions~\ref{assumption:local-lipschitz}, \ref{assumption:weak-convexity},
\ref{assumption:boundedness-of-iterates},
and~\ref{assumption:objective-coercive}).


We start with a result on the boundaries of the sequences $F(x_k)$ and the
growth of the path $x(t)$ interpolating the iterates $x_k$ (recall the
definition~\eqref{eqn:interpolation}).
\begin{lemma}
  \label{lemma:extreme-value-stationary-point}
  With probability one, $\liminf_k F(x_k) \in F(\xdomain\opt)$ and
  $\limsup_k F(x_k) \in F(\xdomain\opt)$. For
  any increasing sequence $\{h_k\} \subset \R$ satisfying
  $h_k \to \infty$ and $\lim_k x(h_k) = x_\infty \not\in \xdomain\opt$
  and any sequence $\tau_k \to \tau > 0$,
  \begin{equation}
    \liminf_k F(x(h_k - \tau_k)) > F(x_\infty) > \limsup_k F(x(h_k + \tau_k)).
    \label{eqn:F-must-grow-around}
  \end{equation}
\end{lemma}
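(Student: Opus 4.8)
The plan is to prove the local statement~\eqref{eqn:F-must-grow-around} first, then deduce the two membership claims from it. Throughout I work on the probability-one event on which Theorem~\ref{theorem:functional-convergence-prox-linear} holds (relative compactness of the time-shifts $x^{\tau_k}(\cdot)$ in $\cont(\R_+,\R^d)$ and every limit point a solution of $\dot{x}\in-G(x)-\normalcone_\xdomain(x)$ with $x(0)\in\xdomain$), on which Assumption~\ref{assumption:boundedness-of-iterates} holds, and on which $\norm{x_{k+1}-x_k}=\stepsize_k\norm{\meangradmap_{\stepsize_k}(x_k)+\error_{\stepsize_k}(x_k;\statrv_k)}\to0$ (using $\stepsize_k\to0$, the uniform bound on $\meangradmap_{\stepsize_k}(x_k)$ from Lemmas~\ref{lemma:gradmap-bound}--\ref{lemma:property-of-Lipschitz-constant}, and $\stepsize_k\norm{\error_{\stepsize_k}(x_k;\statrv_k)}\to0$, a byproduct of the square-summability in the proof of Lemma~\ref{lemma:summability-of-noise}). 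On this event the iterates eventually lie in a fixed compact set $K\subset\xdomain$, the interpolation $x(\cdot)$ eventually lies in the compact set $(K+\ball)\cap\xdomain$ on which $F=f+\regularizer$ is uniformly continuous, and $\sup_{t\in[t_k,t_{k+1}]}\norm{x(t)-x_k}\le\norm{x_{k+1}-x_k}\to0$; consequently $\limsup_{t}F(x(t))=\limsup_kF(x_k)$ and $\liminf_{t}F(x(t))=\liminf_kF(x_k)$, and every cluster value of $\{F(x_k)\}$ (in particular its $\limsup$ and $\liminf$) equals $F(y)$ for some cluster point $y$ of $\{x_k\}$, realized along grid times.

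For~\eqref{eqn:F-must-grow-around}, fix the data $h_k\uparrow\infty$, $x(h_k)\to x_\infty\notin\xdomain\opt$, $\tau_k\to\tau>0$, and set $c_0\defeq\norm{\subgrad\opt(x_\infty)}^2$, which is strictly positive because $x_\infty\notin\xdomain\opt$ means exactly $\subgrad\opt(x_\infty)=\project_{G(x_\infty)+\normalcone_\xdomain(x_\infty)}(0)\ne0$. For the left inequality, pick a subsequence along which $F(x(h_k-\tau_k))\to\liminf_kF(x(h_k-\tau_k))$; by the relative compactness in Theorem~\ref{theorem:functional-convergence-prox-linear} (applied to the shifts $h_k-\tau_k\to\infty$) refine it so that $x^{h_k-\tau_k}(\cdot)\to\bar z(\cdot)$ in $\cont(\R_+,\R^d)$, where $\bar z$ solves $\dot{\bar z}\in-G(\bar z)-\normalcone_\xdomain(\bar z)$ with $\bar z(0)\in\xdomain$. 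Uniform convergence on compacts together with $\tau_k\to\tau$ gives $\bar z(\tau)=\lim_kx(h_k)=x_\infty$ and $\bar z(0)=\lim_kx(h_k-\tau_k)$, hence $\liminf_kF(x(h_k-\tau_k))=F(\bar z(0))$. Since the inclusion is autonomous, $s\mapsto\bar z(s)$ on $[0,\tau]$ is a solution initialized at $\bar z(0)\in\xdomain$, so Theorem~\ref{theorem:monotone-trajectory} yields $F(\bar z(0))\ge F(\bar z(\tau))+\int_0^\tau\norm{\subgrad\opt(\bar z(s))}^2\,ds=F(x_\infty)+\int_0^\tau\norm{\subgrad\opt(\bar z(s))}^2\,ds$. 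The remaining point---and the crux of the argument---is that this integral is strictly positive: the map $x\mapsto\norm{\subgrad\opt(x)}^2=\dist(0,G(x)+\normalcone_\xdomain(x))^2$ is lower semicontinuous, because if $x_n\to x$ and $g_n=\project_{G(x_n)+\normalcone_\xdomain(x_n)}(0)$ are chosen along a subsequence realizing $\liminf_n\dist(0,G(x_n)+\normalcone_\xdomain(x_n))$, then $\{g_n\}$ is bounded (subdifferentials are locally bounded and $0\in\normalcone_\xdomain(x_n)$), a convergent further subsequence $g_n\to g$ satisfies $g\in G(x)+\normalcone_\xdomain(x)$ by outer semicontinuity of $x\mapsto G(x)+\normalcone_\xdomain(x)$, and so $\dist(0,G(x)+\normalcone_\xdomain(x))\le\norm{g}=\liminf_n\dist(0,G(x_n)+\normalcone_\xdomain(x_n))$. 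Composing with the continuous curve $\bar z$, the function $s\mapsto\norm{\subgrad\opt(\bar z(s))}^2$ is lower semicontinuous on $[0,\tau]$ and equals $c_0>0$ at $s=\tau$, hence exceeds $c_0/2$ on some interval $(\tau-\eta,\tau]$ with $0<\eta\le\tau$, so the integral is at least $c_0\eta/2>0$ and $\liminf_kF(x(h_k-\tau_k))=F(\bar z(0))\ge F(x_\infty)+c_0\eta/2>F(x_\infty)$. The right inequality is symmetric: choose a subsequence with $F(x(h_k+\tau_k))\to\limsup_kF(x(h_k+\tau_k))$, refine so that $x^{h_k-\tau_k}(\cdot)\to\bar z'(\cdot)$ with $\bar z'(\tau)=x_\infty$ and $\bar z'(2\tau)=\lim_kx(h_k+\tau_k)$, and apply Theorem~\ref{theorem:monotone-trajectory} to $s\mapsto\bar z'(\tau+s)$ on $[0,\tau]$ and the same lower-semicontinuity near $s=\tau$ to get $\limsup_kF(x(h_k+\tau_k))=F(\bar z'(2\tau))\le F(x_\infty)-\int_\tau^{2\tau}\norm{\subgrad\opt(\bar z'(s))}^2\,ds<F(x_\infty)$.

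Finally I deduce the two membership statements from~\eqref{eqn:F-must-grow-around}. Let $L^+=\limsup_kF(x_k)=\limsup_tF(x(t))$ and pick (by the setup paragraph) a cluster point $x_\infty$ with $F(x_\infty)=L^+$, say $x(h_j)\to x_\infty$ along grid times $h_j=t_{k_j}\uparrow\infty$. If $x_\infty\in\xdomain\opt$, then $L^+=F(x_\infty)\in F(\xdomain\opt)$ and we are done; otherwise~\eqref{eqn:F-must-grow-around} applied with this $\{h_j\}$ and $\tau_k\equiv1$ gives $\liminf_jF(x(h_j-1))>F(x_\infty)=L^+$, which is impossible since $h_j-1\to\infty$ forces $\liminf_jF(x(h_j-1))\le\limsup_tF(x(t))=L^+$; hence $L^+\in F(\xdomain\opt)$. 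Symmetrically, writing $L^-=\liminf_kF(x_k)=\liminf_tF(x(t))$ and choosing a cluster point $x_\infty$ with $F(x_\infty)=L^-$: either $x_\infty\in\xdomain\opt$ and $L^-\in F(\xdomain\opt)$, or~\eqref{eqn:F-must-grow-around} yields $\limsup_jF(x(h_j+1))<L^-$, contradicting $\liminf_jF(x(h_j+1))\ge\liminf_tF(x(t))=L^-$; hence $L^-\in F(\xdomain\opt)$. The main obstacle is the strict positivity of $\int\norm{\subgrad\opt(\bar z(s))}^2\,ds$ across the time at which the limiting trajectory equals $x_\infty$: this needs the lower semicontinuity of the minimal-norm selection of $G+\normalcone_\xdomain$ (hence the outer-semicontinuity structure from Lemma~\ref{lemma:frechet-subgradients-of-f}) combined with the sharp Lyapunov inequality of Theorem~\ref{theorem:monotone-trajectory}; the rest is careful bookkeeping with the reparametrized interpolations and their limits in the Fr\'echet space $\cont(\R_+,\R^d)$.
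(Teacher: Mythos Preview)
Your argument is correct and follows essentially the same route as the paper: extract a limiting trajectory of the differential inclusion via Theorem~\ref{theorem:functional-convergence-prox-linear}, use the Lyapunov inequality of Theorem~\ref{theorem:monotone-trajectory} to force strict decrease through the non-stationary point $x_\infty$, and then derive the membership of $\liminf_k F(x_k)$ and $\limsup_k F(x_k)$ in $F(\xdomain\opt)$ by contradiction. The only notable difference is in how you secure the \emph{strict} decrease: the paper invokes Corollary~\ref{corollary:constant-is-stationary} (if $F$ were constant on $[0,\tau]$ then $\subgrad\opt(\bar z(s))=0$ throughout, contradicting $\subgrad\opt(x_\infty)\ne0$), whereas you argue directly that $x\mapsto\norm{\subgrad\opt(x)}^2$ is lower semicontinuous and hence the integrand is bounded below by $c_0/2$ on a small interval near $s=\tau$. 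These are equivalent packagings of the same idea---your version is slightly more explicit and self-contained, while the paper's is shorter because the corollary has already been recorded.
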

\begin{proof}
  We begin with the second claim~\eqref{eqn:F-must-grow-around} of the
  lemma, as the first is a nearly immediate consequence of the second.  Let
  the probability 1 events of
  Theorem~\ref{theorem:functional-convergence-prox-linear} hold, that is,
  the limit points of the shifted sequences $\{x^{\tau_k}(\cdot)\}$ satisfy
  the differential inclusion~\eqref{eqn:differential-inclusion}.  We introduce
  the left and right-shifted times
  \begin{equation*}
    h_k\pre = h_k -\tau_k
    ~~\text{and}~~h_k\subs = h_k + \tau_k~~\text{for $k \in \N$}.
  \end{equation*}
  To show the lemma, it suffices to show that, for any subsequence 
  $\{h_{k(m)}\}$ of the sequence $\{h_k\}$, there 
  exists a further subsequence $\{h_{k(m(n))}\}_{n \in \N}$
  such that
  \begin{equation}
    \label{eqn:target-strict-decrease-trajectory}
    \lim_{n \to \infty}
    F(x_{h_{k(m(n))}\pre})
    > F(x_\infty)
    > \lim_{n \to \infty} F(x_{h_{k(m(n))}\subs}).
  \end{equation}
  Now, fix a subsequence $\{h_{k(m)}\}_{m \in \N}$.  By
  Assumption~\ref{assumption:boundedness-of-iterates}, both sequences
  $\{x(h_{k(m)}\pre)\}$ and $\{x(h_{k(m)}\subs)\}$ are relatively compact in
  $\R^d$, and Theorem~\ref{theorem:functional-convergence-prox-linear}
  implies that the sequence of shifted functions
  $\{x^{h_{k(m)}\pre}(\cdot)\}_{m \in \N}$ is relatively compact in
  $\cont(\R_+, \R^d)$. As a consequence, there exists a further subsequence
  $\{h_{k(m(n))}\}_n$ such that for $u_n = x(h_{k(m(n))}\pre)$ and
  $v_n = x(h_{k(m(n))}\subs)$, there are points
  $u_\infty$ and $v_\infty$ and a function $\wb{x} \in \cont(\R_+, \R^d)$
  such that
  \begin{equation*}
    \lim_{n} u_n = u_\infty,
    ~~
    \lim_n v_n = v_\infty,
    ~~ \mbox{and} ~~
    \lim_{n \to \infty} x^{h_{k(m(n))}\pre}(\cdot) = \wb{x}(\cdot).
  \end{equation*}
  By this equation, that $\tau_k \to \tau$ as $k \to \infty$, and the assumption
  in the lemma that $\lim_k x(h_k) = \lim_k x^{h_k\pre}(\tau_k) = x_\infty$, we
  have $\wb{x}(0) = u_\infty$, $\wb{x}(\tau) = x_\infty$, and $\wb{x}(2\tau) =
  v_\infty$.  Theorem~\ref{theorem:functional-convergence-prox-linear} shows
  that $\wb{x}$ satisfies the differential
  inclusion~\eqref{eqn:differential-inclusion},
  which
  has monotone trajectory by Theorem~\ref{theorem:monotone-trajectory}.
  As $\wb{x}(\tau) = x_\infty \not\in X\opt$,
  Corollary~\ref{corollary:constant-is-stationary}
  implies the strict decrease
  \begin{equation*}
    F(u_\infty) = F(\wb{x}(0))
    > F(\wb{x}(\tau)) > F(\wb{x}(2\tau)) = F(v_\infty),
  \end{equation*}
  yielding inequality~\eqref{eqn:target-strict-decrease-trajectory}
  and thus inequality~\eqref{eqn:F-must-grow-around}.

  Now we show the first claim of the lemma.  Let $y = \liminf_k F(x_k)$ (the
  proof for case $y = \limsup_k F(x_k)$ is, \emph{mutatis mutandis},
  identical). As the sequence $\{x_k\}_{k=1}^\infty$ is bounded and the
  function $F$ is continuous on $\xdomain$, there is a subsequence
  $\{x_{k(m)}\}_{m \in \N}$ with $x_{k(m)} \to x_\infty$ and $\lim_m
  F(x_{k(m)}) = F(x_\infty) = y$. Recall that $x_{k} = x(t_k)$ for $t_k =
  \sum_{i = 1}^k \stepsize_i$. If $x_\infty \not \in \xdomain\opt$, then for
  any $\tau > 0$ and for $h_k = t_k$,
  inequality~\eqref{eqn:F-must-grow-around} implies $F(x_\infty) > \limsup_m
  F(x(h_{k(m)} + \tau)) \ge \liminf_k F(x_k)$, an absurdity, so we must have
  $x_\infty \in \xdomain\opt$.
\end{proof}

Our second intermediate result shows that the interpolated paths
$x(\cdot)$ cannot move too quickly.
\begin{lemma}
  \label{lemma:lower-bound-time-difference}
  For any two sequences $\{h_k\}_{k=1}^\infty$ and $\{h_k'\}_{k=1}^\infty$
  satisfying $h_k' > h_k$, $\lim_k h_k' = \lim_k h_k = \infty$ and
  $\liminf_k \norm{x(h_k') - x(h_k)} > 0$, we have with probability 1 that
  $\liminf_k (h_k' - h_k) > 0$.
\end{lemma}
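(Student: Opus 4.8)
The plan is to argue by contradiction. Suppose the conclusion fails, so that along some subsequence we have $h_k' > h_k$, both diverging, with $\liminf_k \norm{x(h_k') - x(h_k)} = \rho > 0$, yet (passing to a further subsequence) $h_k' - h_k \to 0$. Since the interpolated path satisfies $\dot{x}(t) = y(t)$ with $y(t) = y_j$ for $t \in [t_j, t_{j+1})$, and we have the representation~\eqref{eqn:stochastic-methods-as-approximation} in which $\norm{\meangradmap_{\stepsize_j}(x_j)} \le L_\epsilon(x_j)$, the only way the path can travel a distance bounded away from $0$ over a vanishingly short time interval is if the accumulated noise contribution $\sum \stepsize_j \error_{\stepsize_j}(x_j; \statrv_j)$ over that interval is itself bounded away from $0$. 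But by Lemma~\ref{lemma:summability-of-noise}, the series $\sum_j \stepsize_j \error_{\stepsize_j}(x_j; \statrv_j)$ converges with probability $1$, so its tail increments over shrinking index-windows must vanish. This is the contradiction I am aiming for.

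Concretely, I would proceed as follows. First, restrict to the probability-$1$ event on which $\sup_k \norm{x_k} < \infty$ (Assumption~\ref{assumption:boundedness-of-iterates}) and on which the noise series $\sum_j \stepsize_j \error_{\stepsize_j}(x_j; \statrv_j)$ converges (Lemma~\ref{lemma:summability-of-noise}). On this event, by upper semicontinuity of $L_\epsilon$ on the compact set containing the iterates (Lemma~\ref{lemma:property-of-Lipschitz-constant}), there is a finite constant $B = \sup_k L_\epsilon(x_k) < \infty$ bounding $\norm{\meangradmap_{\stepsize_j}(x_j)}$ uniformly. Second, write, for each $k$, the telescoped identity
\begin{equation*}
  x(h_k') - x(h_k) = \int_{h_k}^{h_k'} y(\tau)\, d\tau
  = -\sum_{j \in J_k} \stepsize_j \meangradmap_{\stepsize_j}(x_j)
  - \sum_{j \in J_k} \stepsize_j \error_{\stepsize_j}(x_j; \statrv_j)
  + (\text{boundary terms}),
\end{equation*}
where $J_k$ is the (finite) set of indices $j$ with $[t_j, t_{j+1}) \cap [h_k, h_k'] \neq \emptyset$, and the boundary terms come from the partial sub-intervals at the two ends of $[h_k, h_k']$. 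The total step-length $\sum_{j \in J_k} \stepsize_j$ is at most $h_k' - h_k + \max_{j \in J_k}\stepsize_j \to 0$, using $h_k' - h_k \to 0$ and $\stepsize_j \to 0$ (which follows from $\sum \stepsize_j^2 < \infty$). Third, bound the three pieces: the mean term is at most $B \sum_{j \in J_k}\stepsize_j \to 0$; the boundary terms are each bounded by $(\text{local subgradient bound}) \cdot \stepsize_j \to 0$; and the noise term is a tail-difference $\sum_{j \in J_k}\stepsize_j \error_{\stepsize_j}$ of a convergent series, hence $\to 0$ since $J_k$ is a window of indices all tending to $\infty$. Summing, $\norm{x(h_k') - x(h_k)} \to 0$, contradicting $\rho > 0$.

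The main obstacle—really the only subtle point—is verifying that the noise tail-difference $\sum_{j \in J_k}\stepsize_j \error_{\stepsize_j}(x_j; \statrv_j)$ genuinely vanishes. Since $\sum_j \stepsize_j \error_{\stepsize_j}$ converges (not merely is bounded), for any $\varepsilon > 0$ there is $N$ with $\norm{\sum_{j=m}^{n} \stepsize_j \error_{\stepsize_j}} < \varepsilon$ for all $n \ge m \ge N$; and since $h_k, h_k' \to \infty$, eventually $\min J_k \ge N$, so the tail-difference is $< \varepsilon$. One must be slightly careful that $J_k$ is a \emph{contiguous} block of indices (which it is, being the indices whose stepsize-intervals meet a single real interval), so that the tail-difference really is of the form $\sum_{j=m_k}^{n_k}$ with $m_k \to \infty$. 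Everything else is routine bookkeeping with the interpolation~\eqref{eqn:interpolation} and the uniform bounds from Lemmas~\ref{lemma:gradmap-bound} and~\ref{lemma:property-of-Lipschitz-constant}.
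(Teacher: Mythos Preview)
Your proposal is correct and takes essentially the same approach as the paper: both decompose the path increment into a mean-gradient piece bounded by $(\text{time difference})\times\sup_j\norm{\meangradmap_{\stepsize_j}(x_j)}$ (finite by Lemmas~\ref{lemma:gradmap-bound} and~\ref{lemma:property-of-Lipschitz-constant}) plus a noise tail that vanishes by Lemma~\ref{lemma:summability-of-noise}. The only cosmetic difference is that the paper argues directly---deriving the inequality $(t_n-t_m)\cdot M \ge \norm{x(t_n)-x(t_m)}-\epsilon$ and reducing to grid points via the linear interpolation---whereas you phrase it as a contradiction and handle the endpoint boundary terms explicitly; the ingredients and logic are identical.
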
  
\begin{proof}
  As in the proof of
  Lemma~\ref{lemma:extreme-value-stationary-point}, fix the sample
  $\statrv_1, \statrv_2, \ldots$ so that the probability 1 conclusions of
  Theorem~\ref{theorem:functional-convergence-prox-linear} hold.
  Now, for $h \in \R_+$ define
  \begin{equation*}
    \klow(h) = \max\{k \in \N : t_k \le h\}
    ~~ \mbox{and} ~~
    \khigh(h) = \min\{k \in \N : t_k \ge h\},
  \end{equation*}
  where we recall the interpolation times $t_k = \sum_{i = 1}^k \stepsize_i$.
  As $\stepsize_k \to 0$,  the
  statement $\liminf_{k\to \infty} (h_k - h_k') > 0$ is equivalent to 
  the statement $\liminf_{k\to \infty} (t_{\khigh(h_k)} - t_{\klow(h_k')}) > 0$.  
  For any $m \le n \in \N$, we have
  \begin{align*}
    \norm{x(t_n) - x(t_m)}
    & = \normbigg{\sum_{i=m+1}^n \stepsize_i 
      \wb{\gradmap}_{\stepsize_i}(x_i) + \sum_{i=m+1}^n \stepsize_i \noise_i}
    \\
    & \leq (t_n - t_m) \sup_i \norm{\wb{\gradmap}_{\stepsize_i}(x_i)}
    + \normbigg{\sum_{i=m+1}^n \stepsize_i \noise_i}.
  \end{align*}
  Let $\maxgradmap =\sup_i \norm{\wb{\gradmap}_{\stepsize_i}(x_i)} < \infty$
  (use Lemmas~\ref{lemma:gradmap-bound}
  and~\ref{lemma:property-of-Lipschitz-constant} to see that
  $\maxgradmap < \infty$). Lemma~\ref{lemma:summability-of-noise} 
  implies that $\lim_{m\to \infty} \sup_{n \geq m}\norms{\sum_{i=m+1}^n \stepsize_i
    \noise_i} = 0$. Thus, we obtain that for any $\epsilon > 0$, there exists 
  $N \in \N$ such that for all $m, n \geq N$,
  \begin{equation}
    \label{eqn:time-separations}
    (t_n - t_m) \maxgradmap \ge \norm{x(t_n) - x(t_m)} - \normbigg{\sum_{i = m + 1}^n
      \stepsize_i \noise_i}
    \ge \norm{x(t_n) - x(t_m)} - \epsilon.
  \end{equation}
  As $x(\cdot)$ are linear interpolations of $x_k = x(t_k)$
  and $h_k, h_k' \to \infty$,
  for any $\epsilon > 0$ there exists 
  exists $K \in \N$ such that $k \geq K$ implies
  \begin{align*}
    \norm{x(h_k') - x(h_k)}
    & \leq \max\left\{
    \norm{x(t_n) - x(t_m)} : n, m \in [\klow(h_k'), \khigh(h_k)]\right\} \\
    & \le \left(t_{\khigh(h_k)} - t_{\khigh(h_k')}\right) \maxgradmap
    + \epsilon.
  \end{align*}
  Since $\liminf_k \norm{x(h_k') - x(h_k)} > 0$,
  inequality~\eqref{eqn:time-separations}
  gives the result.
\end{proof}

\newcommand{\high}{^{\mathsf{hi}}}
\newcommand{\low}{^{\mathsf{lo}}}

To prove the theorem, we assume $(\liminf_k F(x_k), \limsup_k F(x_k))$ is
non-empty, as otherwise the result is trivial.  As in the proof of
Lemma~\ref{lemma:extreme-value-stationary-point}, fix the sample $\statrv_1,
\statrv_2, \ldots$ so that the probability 1 conclusions of
Theorem~\ref{theorem:functional-convergence-prox-linear} hold.

Suppose for the sake of contradiction that
$y\high \in (\liminf_k F(x_k), \limsup_k F(x_k))$ satisfies $y\high
\not\in F(X\opt)$. Let
$y\low < y\high$, $y\low \in (\liminf_k F(x_k), \limsup_k
F(x_k))$.
We claim we may choose sequences
$\{h_k\low\}$ and $\{h_k\high\}$ with $h_k\low < h_k\high$,
$\lim_k h_k\low = \lim_k h_k\high = \infty$, and
\begin{equation}
  \label{eqn:sequence-times}
  \begin{split}
    F(x(h_k\low)) = y\low, ~~F(x(h_k\high)) = y\high, ~~\text{and}~~ 
    y\low < F(x(t)) < y\high ~~ \text{for}~~ t \in (h_k\low, h_k\high).
  \end{split}
\end{equation}
To see that sequences satisfying condition~\eqref{eqn:sequence-times} exist,
we consider traversals of the interval $[y\low, y\high]$ (see
Figure~\ref{fig:upcrossings}).  As $\liminf_k F(x_k) < y\low < y\high <
\limsup_k F(x_k)$, there exist increasing sequences $\wt{h}_k'$ and
$\wt{h}_k$ with
\begin{equation*}
  F(x(\wt{h}_k')) = y\low, ~~ F(x(\wt{h}_k)) = y\high
  ~~\text{and}~~ \wt{h}_k' < \wt{h}_k.
\end{equation*}
Then we define the last entrance and first subsequent exit times
\begin{equation*}
  h_k\low \defeq \sup \{ {h \in [\wt{h}_k', \wt{h}_k]}: 
  f(x(h)) \leq y\low \}
  ~~ \mbox{and} ~~
  h_k\high \defeq \inf \{h \in [h_k\low, \wt{h}_k] : f(x(h)) \ge y\high \}.
\end{equation*}
The continuity of $F$ and $x(\cdot)$ show that the
conclusion~\eqref{eqn:sequence-times} holds.

\begin{figure}[t]
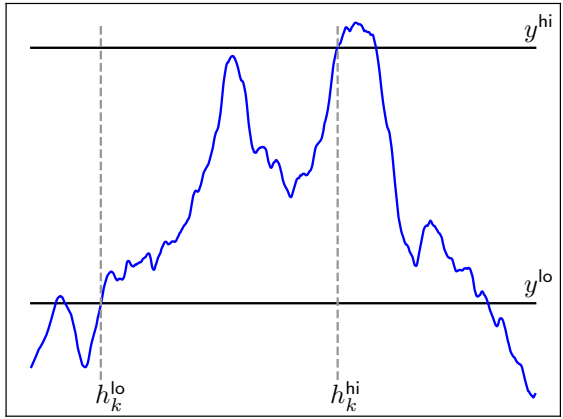

  \begin{center}
    \begin{overpic}[width=.6\columnwidth]{
        figure/upcrossings}
      \put(91,67.5){$y\high$}
      \put(91,24){$y\low$}
      \put(18,5){$h_k\low$}
      \put(58,5){$h_k\high$}
    \end{overpic}
    \caption{\label{fig:upcrossings} Illustration of proof of
      Theorem~\ref{theorem:stationary-cluster-points-extension}. The erratic
      line represents a trajectory $F(x(t))$, with last entrance time
      $h_k\low$ and first exit time $h_k\high$. Such upcrossings must be
      separated in time by the strict decreases
      in Lemma~\ref{lemma:extreme-value-stationary-point}.}
  \end{center}
\end{figure}

By taking a subsequence if necessary, we assume w.l.o.g.\ that $x(h_k\high) \to
x_\infty$. By continuity, we have $y\high = F(x_\infty)$
and $x_\infty \not\in X\opt$ as $y\high \not \in F(X\opt)$.  Now,
fix some $\tau > 0$, and take $y_\tau = \liminf_{k} F(x(h_k\high - \tau))$, which
satisfies $y_\tau > F(x_\infty) = y\high$ by
Lemma~\ref{lemma:extreme-value-stationary-point}, because
$x_\infty \not\in X\opt$.  Consider the value gap $\Delta =
\half \min\{|y_\tau - y\high|, |y\high - y\low|\} > 0$. The continuity of $F$
implies for some $\delta > 0$, we have $|F(x) - y\high| < \Delta$ for $x \in
\xdomain \cap \{x_\infty + \delta \ball\}$.
As $\liminf_k |F(x(h_k\low)) - F(x_\infty)| =
|y\low - y\high| > \Delta$ and $\liminf_k |F(x(h_k\high - \tau)) - F(x_\infty)| =
|y_\tau - y\high| > \Delta$, by continuity of $F$ and our choice of $\delta$, we
must have the separation
\begin{equation}
  \label{eqn:key-step-one-stationary-points}
  \liminf_k \norm{x(h_k\low) - x_\infty} > \delta, ~\text{and}~ 
  \liminf_k \norm{x(h_k\high - \tau) - x_\infty} > \delta.
\end{equation}
For this value $\delta > 0$,
consider the sequence $\{h_k^\delta\}_{k=1}^\infty$ defined by
\begin{equation*}
  h_k^\delta = \max_t\left\{t
  \mid t < h_k\high, \norm{x(t) - x(h_k\high)} = \delta\right\}.
\end{equation*}
Then using $x(h_k\high) \to x_\infty$, we have
$\liminf_k \norm{x(h_k\low) - x(h_k\high)} > \delta$ and so
\begin{equation}
  \label{eqn:separations-to-contradict}
  h_k^\delta \in [h_k\low, h_k\high]
  ~ \mbox{eventually, and} ~
  F(x(h_k^\delta)) \in [y\low,y\high]
\end{equation}
by definition~\eqref{eqn:sequence-times} of the upcrossing times.

By Eq.~\eqref{eqn:key-step-one-stationary-points} and 
that $x(h_k\high) \to x_\infty$, we have $h_k^\delta >
\max\{h_k\low, h_k\high - \tau\}$ for large enough $k$.  In particular, this
implies that $\limsup_k (h_k\high - h_k^\delta) \le \tau$.  Because the paths
$x(\cdot)$ cannot move too quickly by
Lemma~\ref{lemma:lower-bound-time-difference}, the quantity $\tau(\delta)
\defeq \liminf_k (h_k\high - h_k^\delta) \in \openleft{0}{\tau}$.  By taking
subsequences if necessary, we may assume w.l.o.g.\ that the sequence
$h_k^\delta - h_k\high \to \tau_\infty \in [\tau(\delta), \tau]$, so that
$h_k^\delta = h_k\high - \tau_k$ for $\tau_k \to \tau_\infty > 0$.  As $x(h_k\high)
\to x_\infty \not\in X\opt$, Lemma
\ref{lemma:extreme-value-stationary-point} implies that $\liminf_k
F(x(h_k^\delta)) > y\high$, contradicting the
containments~\eqref{eqn:separations-to-contradict}. This is
the desired contradiction, which gives the theorem.


\section{Experiments}
\label{sec:experiments}

The asymptotic results in the previous sections provide somewhat limited
guidance for application of the methods. To that end, in this section we
present experimental results explicating the performance of the methods as
well as comparing their performance to the deterministic prox-linear
method~\eqref{eqn:prox-linear-step-deterministic} (adapted
from~\cite[Section 5]{DrusvyatskiyLe18}). \citet{DrusvyatskiyLe18} provide a
convergence guarantee for the deterministic method that after $O(1 /
\epsilon^2)$ iterations, the method can output an $\epsilon$-approximate
stationary point, that is, a point $\what{x}$ such that there exists $x_0$
with $\norms{\what{x} - x_0} \le \epsilon$ and $\min\{\norm{g} : g \in
\partial f(x_0)\} \le \epsilon$.  These comparisons provide us a somewhat
better understanding of the practical advantages and disadvantages of the
stochastic methods we analyze.

We consider the following problem. We have
observations $b_i = \<a_i, x^\star\>^2$, $i = 1, \ldots, n$, for an unknown
vector $x^\star \in \R^d$, and we wish to find $x^\star$. This is a quadratic
system of equations, which arises (for example) in phase retrieval problems in
imaging science as well as in a number of combinatorial
problems~\cite{ChenCa15,CandesLiSo15}. The natural exact penalty form of this
system of equations yields the minimization problem
\begin{equation}
  \label{eqn:robust-pr}
  \minimize_x ~ f(x) \defeq
  \frac{1}{n} \sum_{i=1}^n |\<a_i, x\>^2 - b_i|,
\end{equation}
which is certainly of the form~\eqref{eqn:convex-composite-stochastic} with
the function $h(t) = |t|$ and $c_i(x) = \<a_i, x\>^2 - b_i$, so we may take the
sample space $\statdomain = \{1, \ldots, n\}$.  In somewhat more
general noise models, we may also assume we observe
$b_i = \<a_i, x^\star\>^2 + \noise_i$ for some noise sequence $\noise_i$; in
this case the problem~\eqref{eqn:robust-pr} is a natural robust analogue of
the typical phase retrieval problem, which uses the smooth objective
$(\<a_i, x\>^2 - b_i)^2$.  While there are a number of specialized
procedures for solving such quadratic equations~\cite{ChenCa15}, we view
problem~\eqref{eqn:robust-pr} as a natural candidate for exploration of our
algorithms' performance.

The stochastic prox-linear update of
Example~\ref{example:stochastic-prox-linear} is reasonably straightforward
to compute for the problem~\eqref{eqn:robust-pr}. Indeed, as $\nabla_x
(\<a_i, x\>^2 - b_i) = 2 \<a_i, x\> a_i$, by rescaling by the stepsize
$\stepsize_k$ we may simplify the problem to minimizing $|b + \<a, x\>| +
\half \norm{x - x_0}^2$ for some scalar $b$ and vectors $a, x_0 \in \R^d$. A
standard Lagrangian calculation shows that
\begin{equation*}
  \argmin_x \left\{ |b + \<a, x\>| + \half \norm{x - x_0}^2
    \right\}
    = x_0 - \pi(\lambda) a
    ~~ \mbox{where} ~
    \lambda = \frac{\<x_0, a\> + b}{\norm{a}^2}
\end{equation*}
and $\pi(\cdot)$ is the projection of its argument into the interval $[-1,
  1]$. The full proximal step (Example~\ref{example:stochastic-prox-point})
is somewhat more expensive, and for general weakly convex functions, it may
be difficult to estimate $\rho(\statval)$, the weak-convexity constant;
nonetheless, in Section~\ref{sec:stepsize-robustness} we use it to evaluate
its merits relative to the prox-linear updates in terms of robustness to
stepsize.  Each iteration $k$ of the deterministic prox-linear
method~\cite{Burke85,DrusvyatskiyLe18} requires solving the quadratic
program
\begin{equation}
  \label{eqn:qp-iterates}
  x_{k + 1} = \argmin_x \bigg\{
  \frac{1}{n} \sum_{i = 1}^n |\<a_i, x_k\>^2 + 2 \<a_i, x_k\> \<a_i, x -
  x_k\> - b_i| + \frac{1}{2 \stepsize} \norm{x - x_k}^2\bigg\},
\end{equation}
which we perform using Mosek via the \texttt{Convex.jl} package in
\texttt{Julia}~\cite{UdellMoZeHoDiBo14}.

Before we present our results, we describe our choices for all parameters in
our experiments. In each experiment, we let $n = 500$ and $d = 50$, and we
choose $x\opt$ uniformly from the unit sphere $\mathbb{S}^{d - 1}$.  The noise
variables $\noise_i$ are i.i.d.\ Laplacian random variables with mean $0$ and
scale parameter $\sigma$, which we vary in our experiments.  We construct the
design matrix $A \in \R^{n \times d}, A = [a_1 ~ \cdots ~ a_n]^T$, where each
row is a measurement vector $a_i$, as follows: we choose
$U \in \R^{n\times d}$ uniformly from the orthogonal matrices in
$\R^{n\times d}$, i.e., $U^T U = I_{d \times d}$. We then make one of two
choices for $A$, the first of which controls the condition number of $A$ and
the second the regularity of the norms of the rows $a_i$. In the former case,
we set $A = UR$, where $R \in \diag(\R^d) \subset \R^{d \times d}$ is diagonal
with linearly spaced diagonal elements in $[1, \kappa]$, so that
$\kappa \ge 1$ gives the condition number of $A$. In the latter case, we set
$A = RU$, where $R \in \diag(\R^n) \subset \R^{n \times n}$ is again diagonal
with linearly spaced elements in $[1, \kappa]$. Finally, in each of our
experiments, we set the stepsize for the stochastic methods as
$\stepsize_k = \stepsize_0 k^{-\beta}$, where $\stepsize_0 > 0$ is the initial
stepsize and $\beta \in (\half, 1)$ governs the rate of decrease in stepsize.
We present three experiments in more detail in the coming subsections: (i)
basic performance of the algorithms, (ii) the role of conditioning in the data
matrix $A$, and (iii) an analysis of stepsize sensitivity for the different
stochastic methods, that is, an exploration of the effects of the choices of
$\stepsize_0$ and $\beta$ in the stepsize choice.

\subsection{Performance for well-conditioned problems}

\begin{figure}[t]
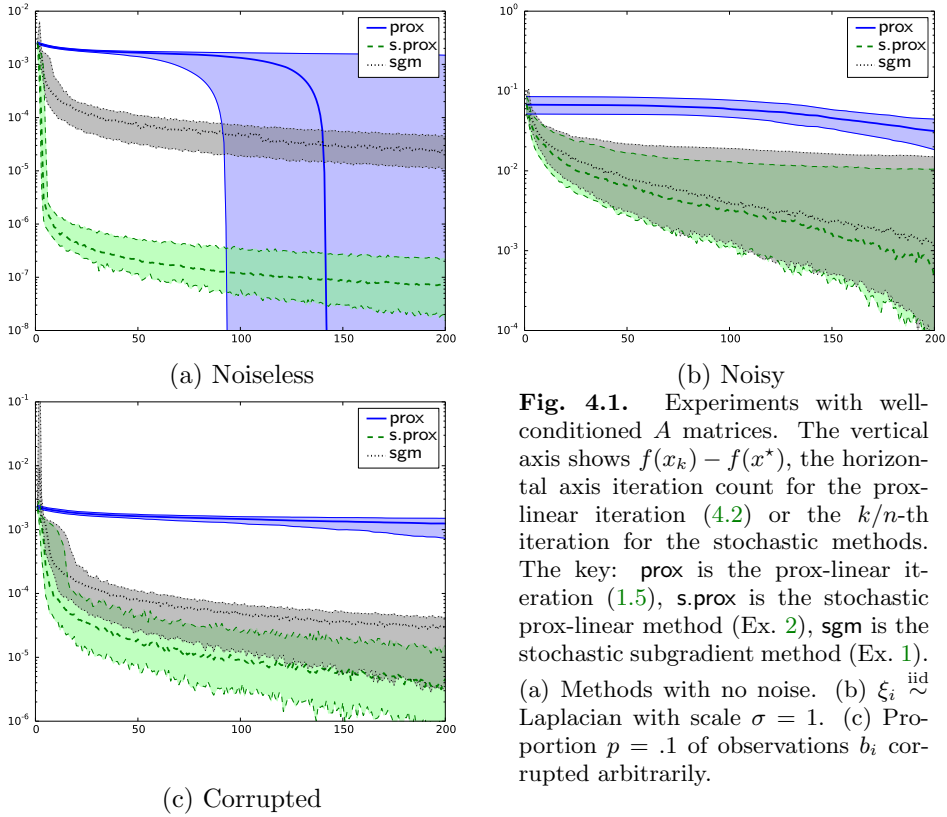

  \begin{center}
    \begin{tabular}{cc}
      \hspace{-.4cm}
      \begin{overpic}[width=.47\columnwidth]{
          figure/phase_no_noise}
        \put(83,60){
          \tikz{\path[draw=white,fill=white] (0, 0) rectangle (.6cm,.7cm);}}
        \put(84.4,69.4){\scriptsize {\sf prox}}
        \put(84.4,65.55){\scriptsize {\sf s.prox}}
        \put(84.4,61.7){\scriptsize {\sf sgm}}
      \end{overpic}
      & \hspace{-.4cm}
      \begin{overpic}[width=.47\columnwidth]{
          figure/phase_noise}
        \put(83,60){
          \tikz{\path[draw=white,fill=white] (0, 0) rectangle (.6cm,.7cm);}}
        \put(84.4,69.4){\scriptsize {\sf prox}}
        \put(84.4,65.55){\scriptsize {\sf s.prox}}
        \put(84.4,61.7){\scriptsize {\sf sgm}}
      \end{overpic} \\
      (a) Noiseless & (b) Noisy \\
      \hspace{-.4cm}
      \begin{overpic}[width=.47\columnwidth]{
          figure/phase_corrupted_1}
        \put(83,60){
          \tikz{\path[draw=white,fill=white] (0, 0) rectangle (.6cm,.7cm);}}
        \put(84.4,69.4){\scriptsize {\sf prox}}
        \put(84.4,65.55){\scriptsize {\sf s.prox}}
        \put(84.4,61.7){\scriptsize {\sf sgm}}
      \end{overpic}
      &
        \hspace{-.3cm} \begin{minipage}{.5\columnwidth}
          \vspace{-4cm}
          \caption{\label{fig:well-conditioned} Experiments with
            well-conditioned $A$ matrices. The vertical axis shows $f(x_k) -
            f(x\opt)$, the horizontal axis iteration count for the
            prox-linear iteration~\eqref{eqn:qp-iterates} or the $k / n$-th
            iteration for the stochastic methods. The key: {\sf prox} is the
            prox-linear iteration~\eqref{eqn:prox-linear-step-deterministic},
            {\sf s.prox} is the stochastic prox-linear
            method (Ex.~\ref{example:stochastic-prox-linear}), {\sf sgm} is
            the stochastic subgradient method (Ex.~\ref{example:sgd}).  (a)
            Methods with no noise.  (b) $\noise_i \simiid$ Laplacian with
            scale $\sigma = 1$.  (c) Proportion $p = .1$ of observations
            $b_i$ corrupted arbitrarily.}
        \end{minipage} \\
      (c) Corrupted &
    \end{tabular}
  \end{center}
\end{figure}

In our first group of experiments, we investigate the performance of the three
algorithms under noiseless and noisy observational situations. In each of
these experiments, we set the condition number $\kappa = \kappa(A) = 1$. We
consider three experimental settings to compare the procedures: in the first,
we have noiseless observations $b_i = \<a_i, x\opt\>^2$; in the second, we set
$b_i = \<a_i, x\opt\>^2 + \noise_i$ where $\noise_i$ are Laplacian with scale
$\sigma = 1$; and in the third, we again have noiseless observations
$b_i = \<a_i, x\opt\>^2$, but for a fraction $p = .1$ of the observations, we
replace $b_i$ with an independent $\normal(0, 25)$ random variable, so that
$n/10$ of the observations provide no information. Within each experimental
setting, we perform $N = 100$ independent tests, and in each individual test
we allow the stochastic methods to perform $N = 200 n$ iterations (so
approximately 200 loops over the data). For the deterministic prox-linear
method~\eqref{eqn:qp-iterates}, we allow 200 iterations.  Each deterministic
iteration is certainly more expensive than $n$ (sub)gradient steps or
stochastic prox-linear steps, but it provides a useful benchmark for
comparison.  The stochastic methods additionally require specification of the
initial stepsize $\stepsize_0$ and power $\beta$ for
$\stepsize_k = \stepsize_0 k^{-\beta}$, and to choose this, we let
$\stepsize_0 \in \{1, 10, 10^2, 10^3\}$ and $\beta \in \{.6, .7, .8, .9\}$,
perform $3n$ steps of the stochastic method with each potential pair
$(\stepsize_0, \beta)$, and then perform the full $N = 200n$ iterations with
the best performing pair.  We measure performance of the methods within each
test by plotting the gap $f(x_k) - f(x\opt)$, where we approximate $x\opt$ by
taking the best iterate $x_k$ produced by any of those methods.  While the 
problem is non-convex and thus may have
spurious local minima, these gaps provide a useful quantification of
(relative) algorithm performance.

We summarize our experimental results in Figure~\ref{fig:well-conditioned}.
In each plot, we plot the median of the excess gap $f(x_k) - f(x\opt)$ as well
as its $10\%$ and $90\%$ confidence intervals over our $N = 100$ tests.  In
order to compare the methods, the horizontal axis scales as iteration $k$
divided by $n$ for the stochastic methods and as iteration for the
deterministic method~\eqref{eqn:qp-iterates}.  Each of the three methods is
convergent in these experiments, and the stochastic methods exhibit fast
convergence to reasonably accurate (say $\epsilon \approx 10^{-4}$) solutions
after a few passes through the data. Eventually (though we do not always plot
such results) the deterministic prox-linear algorithm achieves substantially
better accuracy, though its progress is often slower.  This corroborates
substantial experience from the convex case with stochastic
methods~\cite[c.f.][]{NemirovskiJuLaSh09, DuchiHaSi11}.

There are differences in behavior for the different methods, which we can
heuristically explain. In Fig.~\ref{fig:well-conditioned}(a), the stochastic
prox-linear method (Ex.~\ref{example:stochastic-prox-linear}) converges
substantially more quickly than the stochastic subgradient method. Intuitively,
we expect this behavior because each data point $(a_i, b_i)$ should have
$\<a_i, x\>^2 = b_i$ exactly, and the precise stepping of the prox-linear
method achieves this more easily. In Fig.~\ref{fig:well-conditioned}(b),
where $b_i = \<a_i, x\opt\>^2 + \noise_i$ the two methods have similar
behavior; in this case, the population expectation $f_{\rm pop}(x) = \E[|b -
  \<a, x\opt\>|^2]$ is smooth, because the noise $\noise$ has a density, so
gradient methods are likely to be reasonably effective. Moreover, with
probability $1$ we have $\<a_i, x\opt\>^2 \neq b_i$, so that the precision
of the prox-linear step is unnecessary. Finally,
Fig.~\ref{fig:well-conditioned}(c) shows that the methods are robust to
corruption, but because we have $\<a_i, x\opt\>^2 = b_i$ for the majority of
$i \in \{1, \ldots, n\}$, there is still benefit to using the more exact
(stochastic) prox-linear iteration. We note in passing that the gap in
function values $f(x_k)$ between the stochastic prox-linear method and
stochastic subgradient method (SGM)
is statistically significantly positive at the $p = 10^{-2}$ level for
iterations $k = 1, \ldots, 20$, and that at each iteration $k$, the
prox-linear method outperforms SGM for at least 77 of the $N = 100$
experiments (which is statistically significant for rejecting the hypothesis
that each is equally likely to achieve lower objective value than the other
at level $p = 10^{-6}$).

\subsection{Problem conditioning and observation irregularity}

\begin{figure}[ht]
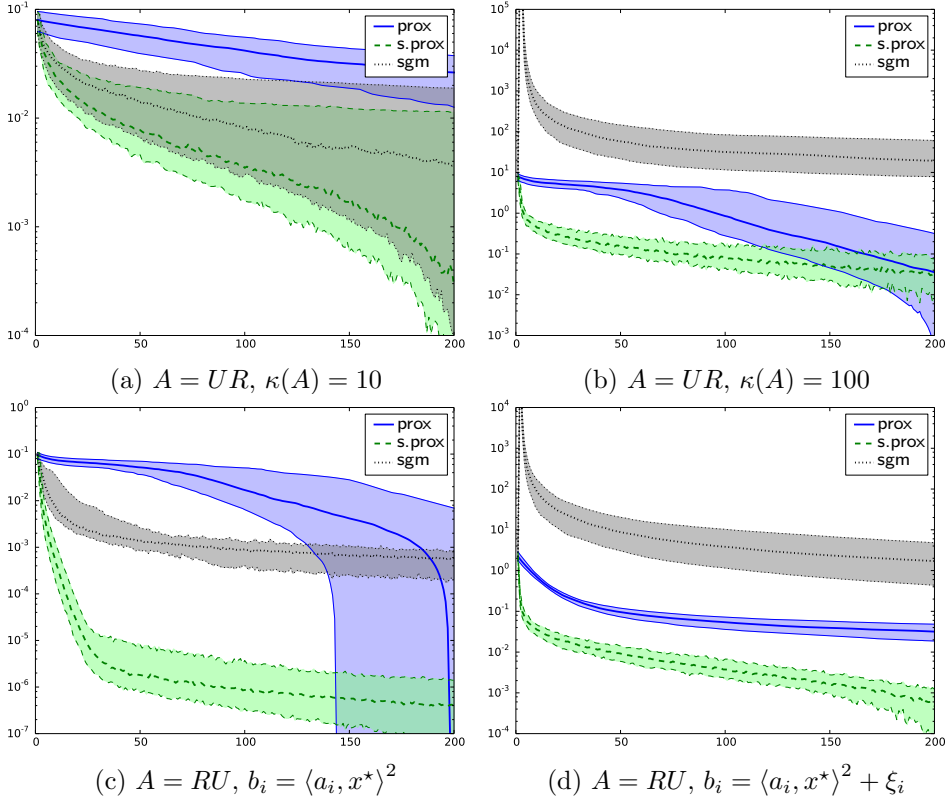

  \begin{center}
    \begin{tabular}{cc}
      \hspace{-.4cm}
      \begin{overpic}[width=.48\columnwidth]{
          figure/phase_cond10_dev1}
        \put(83,60){
          \tikz{\path[draw=white,fill=white] (0, 0) rectangle (.6cm,.7cm);}}
        \put(84.4,69.4){\scriptsize {\sf prox}}
        \put(84.4,65.55){\scriptsize {\sf s.prox}}
        \put(84.4,61.7){\scriptsize {\sf sgm}}
      \end{overpic}
      & \hspace{-.4cm}
      \begin{overpic}[width=.48\columnwidth]{
          figure/phase_cond100_dev1}
        \put(83,60){
          \tikz{\path[draw=white,fill=white] (0, 0) rectangle (.6cm,.7cm);}}
        \put(84.4,69.4){\scriptsize {\sf prox}}
        \put(84.4,65.55){\scriptsize {\sf s.prox}}
        \put(84.4,61.7){\scriptsize {\sf sgm}}
      \end{overpic}
      \\
      (a) $A = UR$, $\kappa(A) = 10$ & (b) $A = UR$, $\kappa(A) = 100$ \\
      \hspace{-.4cm}
      \begin{overpic}[width=.48\columnwidth]{
          figure/phase_irregular10}
        \put(83,60){
          \tikz{\path[draw=white,fill=white] (0, 0) rectangle (.6cm,.7cm);}}
        \put(84.4,69.4){\scriptsize {\sf prox}}
        \put(84.4,65.55){\scriptsize {\sf s.prox}}
        \put(84.4,61.7){\scriptsize {\sf sgm}}
      \end{overpic}
      & \hspace{-.4cm}
      \begin{overpic}[width=.48\columnwidth]{
          figure/phase_irregular10_dev1}
        \put(83,60){
          \tikz{\path[draw=white,fill=white] (0, 0) rectangle (.6cm,.7cm);}}
        \put(84.4,69.4){\scriptsize {\sf prox}}
        \put(84.4,65.55){\scriptsize {\sf s.prox}}
        \put(84.4,61.7){\scriptsize {\sf sgm}}
      \end{overpic}
      \\
      (c) $A = RU$, $b_i = \<a_i, x\opt\>^2$
      & (d)
        $A = RU$, $b_i = \<a_i, x\opt\>^2 + \noise_i$
    \end{tabular}
    \caption{\label{fig:condition-number} Experiments with 
    $A$ matrices of varying condition number and irregularity
    in row norms.}
  \end{center}
\end{figure}

In our second set of experiments, we briefly investigate conditioning of the
problem~\eqref{eqn:robust-pr} by modifying the condition number
$\kappa = \kappa(A)$ of the measurement matrix $A \in \R^{n \times d}$ or by
modifying the relative norms of the rows $\norm{a_i}$ of $A$.  In each of the
experiments, we choose the initial stepsize $\stepsize_0$ and power $\beta$ in
$\stepsize_k = \stepsize_0 k^{-\beta}$ using the same heuristic as the
previous experiment for the stochastic methods (by considering a grid of
possible values and selecting the best after $3n$ iterations).  We present
four experiments, whose results we summarize in
Figure~\ref{fig:condition-number}. As in the previous experiments, we plot the
gaps $f(x_k) - f(x\opt)$ versus iteration $k$ (for the deterministic
prox-linear method) and versus iteration $k/n$ for the stochastic methods.  In
the first two, we use observations $b_i = \<a_i, x\opt\>^2 + \noise_i$, where
the noise variables are i.i.d.\ Laplacian with scale $\sigma = 1$, and we set
$A = UR$ where $R$ is diagonal, in the first
(Fig.~\ref{fig:condition-number}(a)) scaling between $1$ and $\kappa = 10$ and
in the second (Fig.~\ref{fig:condition-number}(b)) scaling between $1$ and
$\kappa = 100$.  Each method's performance degrades as the condition number
$\kappa = \kappa(A)$ increases, as one would expect. The performance of SGM
degrades substantially more quickly with the conditioning of the matrix $A$,
in spite of the fact that noisy observations improve its performance relative
to the other methods (in the case $\sigma = 0$, SGM's relative performance is
worse).

In the second two experiments, we set $A = RU$, where $R$ is diagonal with
entries linearly spaced in $[1, \kappa]$ for $\kappa = 10$, so that the norms
$\norm{a_i}$ are irregular (varying by approximately a factor of
$\kappa = 10$). In the first of the experiments
(Fig.~\ref{fig:condition-number}(c)), we set the observations
$b_i = \<a_i, x\opt\>^2$ with no noise, while in the second
(Fig.~\ref{fig:condition-number}(d)) we set
$b_i = \<a_i, x\opt\>^2 + \noise_i$ for $\noise_i$ i.i.d.\ Laplacian with
scale $\sigma = 1$. In both cases, the stochastic prox-linear method has
better performance---this is to be expected, because its more exact updates
involving the linearization
$h(c(x_k; \statval) + \nabla c(x_k; \statval)^T(x - x_k); \statval)$ are more
robust to scaling of $\norm{a_i}$.
As we explore more carefully in the next set of experiments, one implication
of these results is that the robustness and stability of the stochastic
prox-linear algorithm with respect to problem conditioning is reasonably good,
while the behavior of stochastic subgradient methods can be quite sensitive
to conditioning behavior of the design matrix $A$.

\subsection{Robustness of stochastic methods to stepsize}
\label{sec:stepsize-robustness}

\begin{figure}[t]
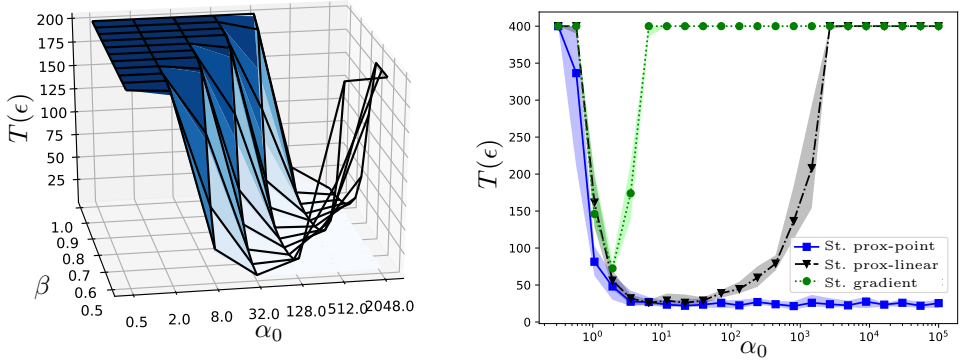

  \begin{center}
    \begin{tabular}{cc}
      \hspace{-1cm}
      \begin{overpic}[width=.55\columnwidth,clip]{
          figure/times_to_eps_0005}
        \put(55,3){$\stepsize_0$}
        \put(14,11){$\beta$}
        \put(9,39){\rotatebox{90}{$T(\epsilon)$}}
      \end{overpic}
      &
      \hspace{-.5cm}
      \begin{overpic}[width=.48\columnwidth]{
          figure/sliced-times-threeway}
        \put(51,0){$\stepsize_0$}
        \put(-5,35){\rotatebox{90}{$T(\epsilon)$}}
        \put(68,13){
          \tikz{\path[draw=white,fill=white] (0, 0) rectangle (1.5cm,.7cm);}}
        \put(69,22){\tiny St.\ prox-point}
        \put(69,18){\tiny St.\ prox-linear}
        \put(69,14){\tiny St.\ gradient}
      \end{overpic}
    \end{tabular}
    \caption{\label{fig:stopping-time} Iterations to achieve
      $\epsilon$-accuracy. Left: time to convergence versus $\stepsize_0$
      and $\beta$ for stochastic subgradient (wireframe) and stochastic
      prox-linear (solid surface) methods. Right: time to convergence versus
      initial stepsize $\stepsize_0$ with $\beta = \half$ for stochastic
      proximal, prox-linear, and gradient methods.}
    \vspace{-.2cm}
  \end{center}
\end{figure}


In our final experiment, we investigate the effects of stepsize parameters for
the behavior of our stochastic methods. For stepsizes
$\stepsize_k = \stepsize_0 k^{-\beta}$, the stochastic methods require
specification of both the parameter $\stepsize_0$ and $\beta$, so it is
interesting to investigate the robustness of the stochastic prox-linear method
and SGM to various settings of $\stepsize_0$ and $\beta$.  In each of these
experiments, we set the condition number $\kappa(A) = 1$ and have no noise,
i.e.\ $b_i = \<a_i, x\opt\>^2$.  We vary the initial stepsize
$\stepsize_0 \in \{2^{-1}, 2^1, 2^3, \ldots, 2^{11}\}$ and the power
$\beta \in \{0.5, 0.55, 0.6, \ldots, 1\}$.
In this experiment, we have $f(x\opt) = 0$, and we investigate
the number of iterations
\begin{equation*}
  T(\epsilon)
  \defeq \inf\left\{k \in \N \mid f(x_k) \le \epsilon\right\}
\end{equation*}
required to find and $\epsilon$-optimal solution. (In our experiments, the
stochastic methods always find such a solution eventually.)  We perform $N =
250$ tests for each setting of the pairs $\stepsize_0, \beta$, and in each
test, we implement all three of the stochastic gradient
(Ex.~\ref{example:sgd}), prox-linear
(Ex.~\ref{example:stochastic-prox-linear}), and proximal-point
(Ex.~\ref{example:stochastic-prox-point}) methods, each for $k = 200n$
iterations, setting $T(\epsilon) = 200n$ if no iterate $x_k$
satisfies $f(x_k) \le \epsilon$.

Figure~\ref{fig:stopping-time} illustrates the results of these experiments,
where the vertical axis gives the median time $T(\epsilon)$ to $\epsilon =
10^{-2}$-accuracy over all the $N = 250$ tests.  The left plot demonstrates
convergence time of the stochastic prox-linear and subgradient
methods versus the initial stepsize $\stepsize_0$ and power $\beta$,
indicated on the horizontal axes. The solid white-to-blue surface, with thin
lines, corresponds to the iteration counts for the stochastic prox-linear
method; the transparent surface with thicker lines corresponds to the
iteration counts for the stochastic subgradient method.
Figure~\ref{fig:stopping-time} shows that the stochastic prox-linear
algorithm consistently has comparable or better performance than SGM for the
same choices of parameters $\stepsize_0, \beta$. The right plot shows
convergence of the stochastic proximal-point method (see Example
\ref{example:stochastic-prox-point} in Sec~\ref{sec:example}), stochastic
prox-linear 
method, and stochastic subgradient method versus stepsize on a log-plot of
initial stepsizes, with $\beta = \half$ fixed. The most salient aspect of
the figures is that the stochastic prox-linear and proximal-point methods
are more robust to stepsize (mis-)specification than is SGM. Indeed,
Fig.~\ref{fig:stopping-time} makes apparent, the range of stepsizes yielding
good performance for SGM is a relatively narrow valley, while the
prox-linear and proximal-point methods enjoy reasonable
performance for broad choices of (often large) stepsizes $\stepsize_0$, with
less sensitivity to the rate of decrease $\beta$ in the stepsize as
well. This behavior is expected: the iterations of the stochastic
prox-linear and proximal-point methods
(Exs.~\ref{example:stochastic-prox-linear}--\ref{example:stochastic-prox-point})
guard more carefully against wild swings that result from aggressive
stepsize choices, yielding more robust convergence and easier stepsize
selection.
%

\appendix

\section{Proof of Theorem~\ref{theorem:functional-convergence-general}}
\label{sec:proof-thm-functional-convergence}

To prove the theorem, we require a few pieces of additional notation. 
Define the indices
\begin{equation*}
  \klow(t) = \max\{k \in \N : t_k \le t\}
  ~~ \mbox{and} ~~
  \khigh(t) = \min\{k \in \N : t_k \ge t\}
  ~~ \mbox{for} ~ t \in \R_+.
\end{equation*}
For $t \in \R_+$, let $x_t(\cdot)$ denote the solution to
the differential equation
\begin{equation}
  \label{eqn:diff-eq-no-noise}
  \dot{x}_t(\cdot) = y(\cdot) ~~\text{and}~~x_t(t) = x(t),
\end{equation}
which is evidently $x_t(\tau) = x(t) + \int_t^\tau y(u) du$.  We divide the
proof into two main parts: first, we show that the function family
$x^\tau(\cdot)$ is relatively compact in $\cont(\R_+, \R^d)$ using the
Arzel\`{a}-Ascoli theorem. In the second part, we apply a few
functional-analytic results on weak convergence to establish that
$x^\tau(\cdot)$ has the limiting differential inclusion properties claimed in
the theorem.

\paragraph{Part I: Relative compactness}
We begin with a lemma that shows an equivalence between the time-shifted and
interpolated~\eqref{eqn:interpolation} sequence $x^t(\cdot) = x(t + \cdot)$
and the differential sequence $x_t(\cdot)$ as $t \to \infty$, showing that the
noise and discretization effects in the
iteration~\eqref{eqn:stochastic-approximation} are essentially negligible.  As
we show presently, the relative compactness of $x_t(\cdot)$ is reasonably
straightforward, so this equivalence simplifies our arguments.
\begin{lemma}
  \label{lemma:asymptotic-equivalence}
  For any $T \ge 0$, we have 
  \begin{equation*}
    \lim_{t \rightarrow \infty} \sup_{\tau \in [t, t+T]} 
    \norm{x(\tau) - x_t(\tau)} = 0.
  \end{equation*}
\end{lemma}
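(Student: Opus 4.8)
The plan is to note that $x(\cdot)$ and $x_t(\cdot)$ are driven by the same velocity field $y(\cdot)$ and agree at time $t$, so their difference is exactly the accumulated stochastic noise, which condition~\ref{item:noise-summation} of Theorem~\ref{theorem:functional-convergence-general} controls. First I would make the drift comparison precise. On each interval $[t_k,t_{k+1})$ the linear interpolation~\eqref{eqn:interpolation} has constant velocity $(x_{k+1}-x_k)/(t_{k+1}-t_k)$, which equals $y_k$ plus the corresponding noise increment; writing $\noise(\cdot)$ for the piecewise-constant noise process associated with the update (so that $\int_0^{t_k}\noise(u)\,du = \sum_{i\le k}\stepsize_i\noise_i$, exactly the partial sum in condition~\ref{item:noise-summation}), this says $\dot{x}(u) = y(u) + \noise(u)$ for almost every $u$, whereas $\dot{x}_t(u) = y(u)$ by~\eqref{eqn:diff-eq-no-noise}. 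Since $x_t(t) = x(t)$, integrating from $t$ gives
\[
  x(\tau) - x_t(\tau) = \int_t^\tau \noise(u)\,du
  \qquad \text{for all } \tau \ge t,
\]
so it suffices to show $\sup_{\tau \ge t}\norm{\int_t^\tau \noise(u)\,du} \to 0$ as $t \to \infty$; the restriction $\tau \in [t,t+T]$ is then automatic, and the window length $T$ plays no role.

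The next step is to read $\int_t^\tau \noise(u)\,du$ as a small piece of the convergent series $\sum_k \stepsize_k \noise_k$, whose convergence is precisely condition~\ref{item:noise-summation} of Theorem~\ref{theorem:functional-convergence-general} (for the stochastic model-based updates, this is Lemma~\ref{lemma:summability-of-noise}). Write $\zeta_m \defeq \sum_{k \ge m}\stepsize_k\noise_k$ for the tails of this series, so $\zeta_m \to 0$ as $m \to \infty$. Splitting the integral at the partition points $t_k$ lying inside $[t,\tau]$, with $m = \klow(t)$ and $n = \klow(\tau)$, one obtains $\int_t^\tau \noise(u)\,du$ as $\zeta_{m+1} - \zeta_n$ plus two boundary contributions from the fractional intervals at the endpoints $t$ and $\tau$. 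Each boundary term has norm at most a single term of the series, i.e.\ $\stepsize_m\norm{\noise_m} = \norm{\zeta_m - \zeta_{m+1}}$ (resp.\ $\stepsize_n\norm{\noise_n} = \norm{\zeta_n-\zeta_{n+1}}$), so that $\norm{\int_t^\tau\noise(u)\,du} \le 5\sup_{j\ge m}\norm{\zeta_j}$, uniformly over $\tau \ge t$.

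To conclude, as $t \to \infty$ the index $m = \klow(t)$ tends to infinity (since $t_m \le t$), hence $\sup_{j \ge m}\norm{\zeta_j} \to 0$, which gives the claimed uniform limit. I do not anticipate any genuine analytic obstacle here: condition~\ref{item:noise-summation} does all the work, and it is worth emphasizing that this lemma uses neither the boundedness of the iterates~\ref{item:bounded-iterates} nor any second-moment bound on the noise. The only point requiring a little care is the bookkeeping of the two partial intervals at the ends of $[t,\tau]$, which is exactly what the tail quantities $\zeta_m$ are set up to absorb.
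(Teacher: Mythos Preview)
Your argument is correct and rests on the same core observation as the paper's proof: the difference $x(\tau)-x_t(\tau)$ is governed entirely by the accumulated noise, and condition~\ref{item:noise-summation} (the convergence of $\sum_k\stepsize_k\noise_k$) makes its tails vanish uniformly. The executions differ in a way worth recording. The paper starts from $x_t(\tau)=x(t)+\int_t^\tau y(u)\,du$, breaks the integral at the grid points $t_p,t_q$, and rewrites the middle piece via the recursion to recover $x(\tau)$ plus a noise sum and two boundary corrections; those corrections are of the form $\stepsize_k(\|\noise_k\|+2\|y_k\|)$, so the paper invokes the boundedness $\sup_k\|y_k\|<\infty$ from condition~\ref{item:bounded-iterates}. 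You instead differentiate both paths, obtain $\dot{x}-\dot{x}_t=\noise(\cdot)$ directly, and integrate to get $x(\tau)-x_t(\tau)=\int_t^\tau\noise(u)\,du$; your boundary terms are then pure single terms $\stepsize_k\|\noise_k\|=\|\zeta_k-\zeta_{k+1}\|$ of a convergent series, which tend to zero without any appeal to $\|y_k\|$. So your route is a bit more economical---it shows the lemma holds under condition~\ref{item:noise-summation} alone---and your remark that the window length $T$ is irrelevant (one in fact gets $\sup_{\tau\ge t}\|x(\tau)-x_t(\tau)\|\to 0$) is correct and sharper than the stated conclusion.
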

\begin{proof}
  Fix $\tau \in [t, t+T]$ and
  let $p = \khigh(t)$ and $q = \klow(\tau)$. Then
  by definition of $x_t$ we have
  \begin{align}
    x_t(\tau)
    &= x(t) + \int_{t}^{t_p} y(u) du + \int_{t_p}^{t_q} y(u) du
      + \int_{t_q}^\tau y(u) du
    \nonumber \\
    &= x(t_p) + \sum_{i=p}^{q-1} \stepsize_i y_i + 
      \left(x(t) + \int_{t}^{t_p} y(u) du - x(t_p)\right) + \int_{t_q}^\tau
      y(u) du
    \nonumber \\
    & = x(t_q) - \sum_{i=p}^{q-1} \stepsize_i \noise_i + 
      \left(x(t) + \int_{t}^{t_p} y(u) du - x(t_p)\right) + \int_{t_q}^\tau
      y(u) du
    \nonumber \\
     \begin{split} 
    & = x(\tau) - \sum_{i=p}^{q-1} \stepsize_i \noise_i + 
      \left(x(t) + \int_{t}^{t_p} y(u) du - x(t_p)\right)  \\
    &~~~~~~~~~~~~~~~~~~~~~~~~
    	 + \left(x(t_q) + \int_{t_q}^\tau y(u) du - x(\tau)\right).
      \label{eqn:x-sub-x-real}
     \end{split}
  \end{align}
  Taking norms and appling the triangle inequality, we have
  \begin{align*}
    \norm{x(t) + \int_{t}^{t_p} y(u) du - x(t_p)}
    &\leq \norm{x(t_{p}) - x(t_{p-1})} + \int_{t}^{t_p} \norm{y(u)} du \\ 
    &\leq \stepsize_{p-1}\left( \norm{\noise_{p-1}} + 2 \norm{y_{p-1}}\right),
  \end{align*}
  and similarly
  \begin{align*}
    \norm{x(t_q) + \int_{t_q}^\tau y(u) du - x(\tau)}
    &\leq \norm{x(t_{q+1}) - x(t_q)} + \int_{t_q}^\tau \norm{y(u)} du \\
    &\le \stepsize_{q} \left(\norm{\noise_q} + 2 \norm{y_q}\right).
  \end{align*}
  The convergence of the sum
  $\sum_{k = 1}^\infty \stepsize_k \noise_k$ implies the uniform guarantees
  \begin{align*}
    \lim_{p \to \infty} \sup_{q\geq p} \normbigg{
    \sum_{i=p}^{q-1} \stepsize_i \noise_i} = 0
    ~~\text{and}~~\lim_{p\to \infty} \sup_{q\geq p}
    \stepsize_q(\norm{\noise_q} + \norm{y_q} = 0.
  \end{align*}
  Substituting these into the bound~\eqref{eqn:x-sub-x-real} gives the result.
\end{proof}

Now we may show the relative compactness of the function sequence
$x^{t_k}(\cdot)$ for any sequence $t_k$.  Using that
$\sup_{t} \norm{y(t)} = \sup_k \norm{y_k} < \infty$ by assumption, we have
that $\{x_t(\cdot)\}_{t \in \R_+}$ is a family of uniformly equicontinuous (even
Lipschitz) and pointwise bounded functions. The Arzel\`{a}-Ascoli theorem
implies it is therefore relatively compact in $\cont(\R_+, \R^d)$. In turn,
this implies that the family of shifted interpolant functions
$\{x^t(\cdot)\}_{t\in \R_+} \in \cont(\R_+, \R^d)$ is also relatively
compact. Indeed, pick an arbitrary sequence $\{t_k\}_{k=1}^\infty$; then one
of the following two cases must occur.
\begin{enumerate}
\item The sequence $\{t_k\}_{k=1}^\infty$ has a bounded subsequence. 
  Pick any cluster point $t$ of $t_k$ (w.l.o.g.\ we may assume $t_k \to t$). 
  Then by the uniform (even Lipschitz) continuity of $x(\cdot)$ on $[0, t + T]$ for 
  any $T \ge 0$, we have that 
  $x^{t_k}(\cdot) \to x^t(\cdot)$ in $\cont(\R_+, \R^d)$.
\item The sequence $\{t_k\}_{k=1}^\infty$ satisfies $t_k \to \infty$.
  Since $x_{t_k}(\cdot)$ is relatively compact in $\cont(\R_+, \R^d)$,
  it has a convergence subsequence $x_{t_{k_n}}(\cdot) \to \wb{x}(\cdot)$ for
  some limit function $\wb{x}(\cdot)$.
  Lemma~\ref{lemma:asymptotic-equivalence} shows that 
  $x^{t_{k_n}}(\cdot) \to \wb{x}(\cdot)$ as well.
\end{enumerate}
These cases combined yield that $\{x^t(\cdot)\}_{t \in \R_+}$
is relatively compact.

\paragraph{Part II: Limiting differential inclusion}
We now establish the remainder of the theorem.  Following our notational
conventions, we define the shifts $y^t(\cdot) = y(t + \cdot)$.  Let
$\{u_k\}_{k \in \N}$ satisfy $u_k \to \infty$ and fix $T > 0$, and w.l.o.g.\
assume that $x^{u_k}(\cdot) \to \wb{x}(\cdot)$ in $\cont(\R_+, \R^d)$ for some
continuous $\wb{x}(\cdot)$.  Viewing
$\mc{Y}_T \defeq \{y^{u_k}(\tau), \tau \in [0, T]\}_{k \in \N}$ as a subset of
the Hilbert space $L^2([0, T])$, $\mc{Y}_T$ is bounded and thus weakly
sequentially compact by the Banach-Alaoglu theorem. Thus the sequence
$\{u_k\}$ has a subsequence $u_{k_n}$ such that
$y^{u_{k_n}}(\cdot) \to \wb{y}(\cdot)$ weakly in $L^2([0, T])$ for some
$\wb{y}(\cdot) \in L^2([0, T])$.  Lemma~\ref{lemma:asymptotic-equivalence}
also implies that the integrated sequence
$x_{u_k}(u_k + \cdot) \to \wb{x}(\cdot)$ in $\cont(\R_+, \R^d)$.  Using the
definition~\eqref{eqn:diff-eq-no-noise} of $x_t(t + \cdot)$, we have for all
$t, \tau \in \R_+$ that
\begin{equation*}
  x_t(t + \tau) = x_t(t) + \int_0^\tau y^t(u) du.
\end{equation*}
Substituting $t = u_{k_n}$, and taking limits as $k_n \to \infty$ on both
sides of this equality, we obtain that for $\tau \le T$ we have
\begin{equation}
  \label{eqn:get-to-bar-y-integral}
  \wb{x}(\tau)  = \wb{x}(0) + \int_{0}^\tau \wb{y}(u) du
\end{equation}
by the weak convergence of $y^{u_{k_n}}$. It remains to show
that $\wb{y}(u) \in H(\wb{x}(u))$ for (almost) all $u \le T$.

As the set $\mc{Y}_T \subset L^2([0, T])$ is bounded, the Banach-Saks theorem
implies the existence of a further subsequence
$\{k_{n_j}\}_{j \in \N}$, which to simplify notation we now assume
without loss of generality is simply the original sequence $k$, such that 
\begin{equation*}
  \frac{1}{N} \sum_{k=1}^N y^{u_k} (\cdot) \to \wb{y}(\cdot)
\end{equation*}
in $L^2([0, T])$ as well as almost everywhere. (We could replace the original
sequence $u_k$ with $u_{k_{n_j}}$ and repeat, \emph{mutatis mutandis}, our
preceding argument.)  For $t \in \R_+$, define the left values
$l_k(t) \defeq \klow(u_k +t)$. Then we have by definition of the interpolated
sequence $y(\cdot)$ and the sequence of mappings $g_k : \R^d \toto \R^d$ that
\begin{equation*}
  y^{u_k}(\tau) \in g_{l_k(\tau)}(x(t_{l_k(\tau)}))
\end{equation*}
where we recall that $t_l = \sum_{k = 1}^l \stepsize_k$.
By the definition $x^t(\tau) = x(t + \tau)$, the triangle inequality,
and the left indices $l_k(\tau)$,
for any fixed $\tau$ we find that
\begin{align*}
  \norm{x(t_{l_k(\tau)}) - \wb{x}(\tau)}
  & \le \norm{x(t_{l_k(\tau)}) - x(u_k + \tau)} + \norm{x^{u_k}(\tau) - \wb{x}(\tau)}
  \\
  & \le \norm{x(t_{l_k(\tau) + 1}) - x(t_{l_k(\tau)})}
    + \norm{x^{u_k}(\tau) - \wb{x}(\tau)} \\
  & \le \stepsize_{l_k(\tau)} \norm{\noise_{l_k(\tau)}}
    + \stepsize_{l_k(\tau)} \norm{y_{l_k(\tau)}}
    + \norm{x^{u_k}(\tau) - \wb{x}(\tau)},
\end{align*}
and this quantity converges to zero as $k \to \infty$ because
$y(\cdot)$ is bounded by condition~\eqref{item:bounded-iterates} of
the theorem, $\stepsize_k \norm{\noise_k} \to 0$ by
the convergence condition~\eqref{item:noise-summation}, and
$x^{u_k}(\tau) \to \wb{x}(\tau)$ by the assumption that $x^{u_k} \to \wb{x}$
in $\cont(\R_+, \R^d)$. That is, $x(t_{l_k(\tau)}) - \wb{x}(\tau) \to 0$ for
all $\tau$ as $k \to \infty$.

Lastly, we apply item~\eqref{item:convergence-set-mapping} in the conditions
for the theorem. Using $y^{u_k}(\tau) \in g_{l_k(\tau)}(x(t_{l_k(\tau)}))$, we
have
\begin{align*}
  \dist(\wb{y}(\tau), H(\wb{x}(\tau)))
  & \le \underbrace{\normbigg{\frac{1}{N}
      \sum_{k = 1}^N y^{u_k}(\tau) - \wb{y}(\tau)}}_{\to 0}
    + \dist\left(\frac{1}{N} \sum_{k = 1}^N y^{u_k}(\tau),
    H(\wb{x}(\tau))\right) \\
  & \le \dist\left(\frac{1}{N} \sum_{k = 1}^N g_{l_k(\tau)}(x(t_{l_k(\tau)})),
    H(\wb{x}(\tau))\right) + o(1) \to 0
\end{align*}
as $N \to \infty$ by item~\eqref{item:convergence-set-mapping}. As
$H(\wb{x}(\tau))$ is closed, we obtain $\wb{y}(\tau) \in H(\wb{x}(\tau))$
for almost every $\tau \in [0, T]$, as desired. Modifying
$\wb{y}(\tau)$ on a suitable null set and recognizing that $T$ is
arbitrary then yields the theorem.

\section{Technical Proofs and Results}

\subsection{Proof of Claim~\ref{claim:composite-weak-convexity}}
\label{sec:proof-composite-weak-convexity}

Fix $\statval \in \statdomain$; we let $h = h(\cdot; \statval)$ and $c =
c(\cdot; \statval)$ for notational simplicity.
Then for any $y, z$ with $\norm{y - x} \le \epsilon$
and $\norm{z - x} \le \epsilon$
and some vector $v$ with $\norm{v} \le \lipc_\epsilon \norm{y - z}^2 / 2$,
we have
\begin{align*}
  h(c(y)) &
  = h(c(z) + \nabla c(z)^T(y - z) + v) \\
  & \stackrel{(i)}{\ge}
  h(c(z) + \nabla c(z)^T (y - z)) - \liph_\epsilon(x) \norm{v} \\
  & \stackrel{(ii)}{\ge}
  h(c(z)) + \partial h(c(z))^T \nabla c(z)^T(y - z)
  - \frac{\liph_\epsilon(x) \lipc_\epsilon(x)}{2} \norm{z - y}^2,
\end{align*}
where inequality $(i)$ follows from the local Lipschitz continuity of
$h$ and $(ii)$ because $h$ is subdifferentiable.
Let $\lambda \ge \liph_\epsilon(x) \lipc_\epsilon(x)$.
Then adding the quantity $\frac{\lambda}{2} \norm{y - x_0}^2$ to both
sides of the preceding inequalities,
we obtain for any $g \in \partial h(c(x))$ that
\begin{align*}
  h(c(y)) + & \frac{\lambda}{2} \norm{y - x_0}^2
  \ge h(c(z)) + (\nabla c(z) g)^T (y - z)
  - \frac{\lambda}{2} \norm{z - y}^2
  + \frac{\lambda}{2} \norm{y -x_0}^2 \\
  & = h(c(z))
  + \frac{\lambda}{2} \norm{z - x_0}^2
  + \<\nabla c(z) g, y - z\>
  + \lambda \<z - x_0, y - z\>.
\end{align*}
That is, the function $y \mapsto h(c(y)) + \frac{\lambda}{2} \norm{y -
  x_0}^2$ has subgradient $\nabla c(z) g + \lambda (z - x_0)$ at $y = z$
for all $z$ with $\norm{z - x} \le \epsilon$; any function with non-empty
subdifferential everywhere on a compact convex set must be convex on that
set~\cite{HiriartUrrutyLe93ab}. In particular,
we see that $y \mapsto f(y; \statval)$ is $\lambda(\statval, x)
= \liph_\epsilon(x, \statval) \lipc_\epsilon(x, \statval)$-weakly convex
in an $\epsilon$-neighborhood of $x$, giving the result.

The final result on Condition~\ref{item:upper-approximation}
is nearly immediate: we have
\begin{equation*}
  h(c(y;\statval);\statval)
  \ge h(c(x;\statval) + \nabla c(x;\statval)^T(y - x);\statval)
  - \frac{\liph_\epsilon(x;\statval) \lipc_\epsilon(x;\statval)}{2}
  \norm{y - x}^2
\end{equation*}
for $y$ in an $\epsilon$-neighborhood of $x$ by
the Lipschitz continuity of $h$ and $\nabla c$.

\subsection{Proof of Lemma~\ref{lemma:frechet-subgradients-of-f}}
\label{sec:proof-frechet-subgradients-of-f}

Recall Assumption~\ref{assumption:weak-convexity} that for all $x \in
\xdomain$ and some $\epsilon > 0$, there exists $\lambda(\statval, x)$ such
that $y \mapsto f(y; \statval) + \frac{\lambda(\statval,x)}{2} \norm{y-x}^2$
is convex for $\norm{y - x} \le \epsilon$, and $\E[\lambda(\statrv, x)] <
\infty$ for all $x$.  Then $f(\cdot; \statval)$ has a Fr\'{e}chet
subdifferential $\partial f(x; \statval)$ and the directional
derivative of $f(\cdot; \statval)$ in the direction $v$ is $f'(x; \statval;
v) = \sup_{g \in \partial f(x; \statval)} \<g, v\>$
(cf.~\cite[Ch.~8]{RockafellarWe98}).  Let $\lambda(\statval) =
\lambda(\statval, x)$ for shorthand, as $x$ is fixed throughout our
argument.  Fix $v \in \R^d$, and let $u$ be near $v$ with $t < \epsilon /
(\norm{u}+\norm{v})$. Then
\begin{align*}
  f(x + tu)
  = \int \left[f(x + tu; \statval) + \frac{\lambda(\statval) t^2}{2}
    \norm{u}^2 \right] dP(\statval)
  - \frac{t^2}{2} \norm{u}^2 \E[\lambda(\statrv)].
\end{align*}
Because $u \mapsto f(x + tu; \statval) + \frac{\lambda(\statval)}{2}
\norm{t u}^2$ is a normal convex integrand~\cite[Ch.~14]{RockafellarWe98},
the dominated convergence theorem implies that
\begin{align*}
  \frac{f(x + tu) - f(x)}{t}
  & = \int \left[\frac{f(x + tu; \statval) - f(x; \statval)}{t}
    + t \frac{\lambda(\statval)}{2} \norm{u}^2 \right] dP(\statval)
  - \frac{t \norm{u}^2}{2} \E[\lambda(\statrv)] \\
  & \to \int f'(x; \statval; v) dP(\statval)
  ~~~ \mbox{as}~ t \to 0, u \to v.
\end{align*}
That is, $f'(x; v) = \int f'(x, \statval; v) dP(\statval)$.
An argument parallel to that of \citet[Propositions
2.1--2.2]{Bertsekas73} yields that
$\partial f(x) = \int \partial f(x; \statval) dP(\statval)$
and that $\partial f(x)$ is compact.

Now we show that $\partial f(\cdot)$ is outer semi-continuous.
Because the support function of the subdifferential
$\partial f(x)$ is the directional derivative of $f$,
the outer semi-continuity of $\partial f$ is equivalent to
\begin{equation}
  \limsup_{k \to \infty} f'(x_k; v)
  \le f'(x; v)
  ~~ \mbox{for~all~} \norm{v} = 1 ~ \mbox{and} ~ x_k \to x \in X
  \label{eqn:limsup-upper-continuity}
\end{equation}
(cf.~\cite[Proposition V.3.3.9]{HiriartUrrutyLe93ab}).  The
sets
$\partial_y (f(y; \statval) + (\lambda(\statval)/2) \norm{y-x}^2)$
are bounded for $y$ in a neighborhood of $x$ because the function
$f$ is weakly convex, where $\lambda(\cdot)$ is $P$-integrable.
Let $\lambda = \E_P[\lambda(\statrv, x)] < \infty$, and define
$g(y) = f(y) + \frac{\lambda}{2} \norm{y - x}^2$.
Then $g$ is convex and continuous near $x$~\cite{Bertsekas73},
and we have~\cite[Corollary VI.6.2.5]{HiriartUrrutyLe93ab} that
\begin{equation*}
  g'(x; v) = \limsup_{y \to x} g'(y; v)
\end{equation*}
for all $v \in \R^d$.
But for convex $g$, we have $g'(x; v) = \lim_{t \downarrow 0}
(g(x + tv) - g(x))/t$, and so the preceding display implies
that as $y \to x$ we have
\begin{align*}
  \lefteqn{-o(1)  \le
    g'(x; v) - g'(y; v)} \\
  & = \lim_{t \downarrow 0} \!
  \left[\!\frac{f(x + tv) - f(x)}{t}
    + \frac{\lambda t \norm{v}^2}{2}\!\right]
  - \lim_{t \downarrow 0} \!
  \left[\!\frac{f(y + tv) - f(y)}{t}
    + \lambda \<v, y - x\>
    + \frac{\lambda t \norm{v}^2}{2} \!\right] \\
  & = \lim_{t \downarrow 0}
  \frac{f(x + tv) - f(x)}{t}
  - \lim_{t \downarrow 0} \frac{f(y + tv) - f(y)}{t}
  - \lambda \<v, y - x\>.
\end{align*}
In particular, the preceding limits exist,
we have $f'(x; v) = \lim_{t \downarrow 0} (f(x + tv) - f(x))/t$,
and inequality~\eqref{eqn:limsup-upper-continuity} holds by taking
$y \to x$. The preceding argument works, of course, for any
weakly convex function, and so applies to $f(\cdot; \statval)$ as well.

The final claim of the lemma is
a standard calculation~\cite{DrusvyatskiyLe18,DrusvyatskiyIoLe16}.

\subsection{Proof of Lemma~\ref{lemma:property-of-Lipschitz-constant}}
\label{sec:proof-property-of-Lipschitz-constant}

That $L_\epsilon(x; \statval) < \infty$ for all $x$ is immediate, as
$G(x; \statval) = \partial \regularizer(x) + \partial f(x; \statval)$, and
subdifferentials of convex functions ($\regularizer$) are compact convex
sets.

We first show the upper semicontinuity of the function
$L_\epsilon(\cdot; \statval)$. Suppose for the sake of contradiction that for
some $x\in \xdomain$ and for some sequence $\{x_k\}_{k=1}^\infty \subset X$
converging to $x$, we have $\lim_{k\to \infty} L_\epsilon(x_k; \statval)$
exists and there is some $\delta > 0$ such that
\begin{equation*}
  \lim_{k\to \infty} L_\epsilon(x_k; \statval) \geq 
  L_\epsilon(x; \statval) + \delta.
\end{equation*}
By definition of $L_\epsilon$ we may choose $x_k' \in X$ such that
$\norm{x_k - x_k'} \le \epsilon$ and subgradient vectors
$p_k(\statval) \in \partial f(x_k'; \statval)$ and
$q_k(\statval) \in \partial \regularizer(x_k')$ satisfying
\begin{equation*}
  L_\epsilon(x_k; \statval) \leq \norm{p_k(\statval) + q_k (\statval)}
  + \delta/2
  ~~ \mbox{for~all~} k.
\end{equation*}
Since 
the sequence $\{x_k'\} \subset X$ is bounded, it has accumulation points
and we may assume w.l.o.g.\ that $x_k' \to x' \in X$, where
$x'$ satisfies
$\norm{x' - x} \leq \epsilon$.
The outer semicontinuity of the subdifferential for weakly convex
functions (Lemmas~\ref{lemma:subg-outer-semicontinuity}
or~\ref{lemma:frechet-subgradients-of-f}) shows that
there must be a subsequence $\{n_k\}$ satisfying
$p_{n_k}(\statval) \to p(\statval) \in \partial f(x'; \statval)$
and $q_{n_k}(\statval) \to q(\statval) \in \partial \regularizer(x')$.
In particular,
\begin{align*}
  \lim_{k\to \infty} L_\epsilon(x_k; \statval) = 
  \limsup_{k \to \infty} L_\epsilon(x_{n_k}; \statval) 
  & \le \limsup_{k \to \infty}
    \norm{p_{n_k}(\statval) + q_{n_k}(\statval)}
    + \frac{\delta}{2} \\
  & = \norm{p(\statval) + q(\statval)} + \frac{\delta}{2}
    \le L_\epsilon(x; \statval) + \frac{\delta}{2},
\end{align*}
which is a contradiction. Thus $L_\epsilon(\cdot; \statval)$ and
$L_\epsilon^2(\cdot; \statval)$ are
upper semicontinuous.

To see that $L_\epsilon(\cdot)$ is upper semicontinuous, we construct an
integrable envelope for the function and then apply Fatou's lemma.  Indeed,
using the assumed $\lipf_\epsilon(x; \statval)$-local Lipschitz continuity
of $y \mapsto f(y; \statval)$ for $y$ near $x$, we have
\begin{equation*}
  \norm{G(y; \statval)} \le \lipf_\epsilon(x; \statval)
  + \norm{\partial \regularizer(y)}
\end{equation*}
for $y$ with $\norm{y - x} \le \epsilon$.
This quantity is integrable, and we may apply Fatou's lemma
and Assumption~\ref{assumption:local-lipschitz}
to obtain
\begin{equation*}
  \limsup_{y \to x} L_\epsilon(y)
  \le \E\left[\limsup_{y \to x} L_\epsilon(y)\right]
  \le \E[L_\epsilon(x; \statrv)]
  \le \sqrt{\E[L_\epsilon(x; \statrv)^2]}.
\end{equation*}

\subsection{Proof of Observation~\ref{observation:boundedness-tikhonov}}
\label{sec:proof-boundedness-tikhonov}

In the case that
$\regularizer(x) = \frac{\lambda}{2} \norm{x}^2$, the stochastic
update in Ex.~\ref{example:sgd} becomes
$x_{k + 1} = \frac{1}{1 + \stepsize_k \lambda} x_k - \frac{\stepsize_k}{1 +
  \stepsize_k \lambda} g_k$, and we have the recursion
\begin{equation*}
  \norm{x_{k + 1}} \le \frac{\norm{x_k}}{1 + \stepsize_k \lambda}
  + \frac{\stepsize_k}{1 + \stepsize_k \lambda} L(\statrv_k)
  \le \prod_{i = 1}^k (1 + \stepsize_i \lambda)^{-1} \norm{x_1}
  + \sum_{i = 1}^k \stepsize_i L(\statrv_i) \prod_{j = i}^k (1 + \stepsize_j \lambda)^{-1}.  
\end{equation*}
Let $L_i = L(\statrv_i)$ for shorthand and define
$\noise_i = L_i - \E[L(\statrv_i)]$, noting $\noise_i$ are i.i.d.\ and mean zero.
Assume that $\lambda = 1$ and $\E[L(\statrv)^2] = 1$
without loss of generality. Defining
$Z_k = \sum_{i = 1}^k \stepsize_i L_i \prod_{j = i}^k (1 + \stepsize_j
\lambda)^{-1}$,
so that
$Z_k - \E[Z_k] = \sum_{i = 1}^k \stepsize_i \noise_i \prod_{j = i}^k (1 +
\stepsize_j \lambda)^{-1}$
and noting that
\begin{equation*}
  \E[Z_{k+1}]
  = \frac{\E[Z_k] + \stepsize_{k + 1}}{1 + \stepsize_{k + 1}}
  = \begin{cases} \le \E[Z_k] & \mbox{if~} \E[Z_k] > 1 \\
    \le 1 & \mbox{if~} \E[Z_k] \le 1 \end{cases}
\end{equation*}
we have that $\sup_k \E[Z_k] < \infty$. Moreover, if we let
$M_k = \sum_{i = 1}^k \stepsize_i \prod_{j = 1}^{i - 1} (1 + \stepsize_j)
\noise_i$,
then $Z_k - \E[Z_k] = \prod_{j = 1}^k (1 + \stepsize_j)^{-1} M_k$, and $M_k$
is a martingale adapted to the filtration
$\mc{F}_k = \sigma(\statrv_1, \ldots, \statrv_k)$. Noting that
$M_{k + 1} - M_k = \stepsize_{k + 1} \prod_{j = 1}^k (1 + \stepsize_j)
\noise_{k + 1}$, we have
\begin{equation*}
  \sum_{k = 1}^\infty \frac{1}{\prod_{j = 1}^k (1 + \stepsize_j)^2}
  \E[(M_{k + 1} - M_k)^2 \mid \mc{F}_k]
  = \sum_{k = 1}^\infty \stepsize_{k + 1}^2 \E[\noise_{k + 1}^2]
  < \infty.
\end{equation*}
Applying standard $L_2$-martingale convergence results (e.g.~\cite[Exercise
5.3.35]{Dembo16}) gives that
$M_k / \prod_{j = 1}^k (1 + \stepsize_j) \cas 0$, and thus
$Z_k \cas \E[Z_k]$, while certainly
$\limsup_k \norm{x_k} \le \limsup_k Z_k$.

\subsection{Proof of Observation~\ref{observation:coercive-bounded}}
\label{sec:proof-observation-coercive-bounded}

The standard first-order optimality conditions for minimization
of (strongly) convex problems 
immediately yield
\begin{equation}
  \label{eqn:one-step-shrink-regularizer}
  \frac{1}{\stepsize_k} \norm{x_k - x_{k + 1}}^2
  + \<g_k, x_{k  +1}\> + \regularizer(x_{k + 1})
  \le \<g_k, x_k\> + \regularizer(x_k),
\end{equation}
and as the iterations are unconstrained, we have
$x_{k + 1} - x_k = - \stepsize_k g_k - \stepsize_k v_{k + 1}$ for some
$v_{k + 1} \in \partial \regularizer(x_{k + 1})$.  We first assume that the
regularizer $\regularizer$ increases in iteration $k$, so
$\regularizer(x_k) \le \regularizer(x_{k + 1})$. Then
inequality~\eqref{eqn:one-step-shrink-regularizer} implies that
\begin{equation*}
  \norm{x_{k + 1} - x_k}
  \norm{g_k + v_{k + 1}}
  = \frac{1}{\stepsize_k} \norm{x_{k + 1} - x_k}^2
  \le \<g_k, x_k - x_{k + 1}\>
  \le \norm{g_k} \norm{x_k - x_{k + 1}},
\end{equation*}
or $\norm{g_k + v_{k + 1}} \le \norm{g_k}$ and we have
$\norm{v_{k + 1}} \le 2 \norm{g_k}$.
The local Lipschitz assumption on $h(c(x; \statval); \statval)$
guarantees $\norm{g_k} \le L (1 + \norm{x_k}^\nu)$.
Choose the constant $1 \le B < \infty$ such that if $\norm{y} \ge B$ then
$\norm{v} \ge 4 L \norm{y}^{\beta - 1}$ for all $v \in \partial
\regularizer(y)$, which must exist by our coercivity assumption,
and $B^\frac{\beta - 1}{\nu} \ge (1/\lambda) B$, where
$\lambda \in \openleft{0}{1}$ is the regularity parameter of
$\regularizer$
for which $\regularizer(\lambda x) \le \regularizer(x)$ for
all $x$ with $\norm{x} \ge B$ (recall $\beta - 1 > \nu$).
Then we have that
\begin{equation*}
  4L \norm{x_{k + 1}}^{\beta - 1} \le 2 \norm{g_k}
  \le 2 L (1 + \norm{x_k}^\nu)
  \le 4 L \cdot \begin{cases}
    \norm{x_k}^\nu & \mbox{if~} \norm{x_k} \ge 1 \\
    1 & \mbox{otherwise}
  \end{cases}
\end{equation*}
if $\norm{x_{k + 1}} \ge B$. Otherwise, we have
$\norm{x_{k + 1}} < B$, so our assumptions on $B$ guarantee
\begin{equation*}
  \norm{x_{k + 1}}
  \le \max\left\{B, 1, \norm{x_k}^\frac{\nu}{\beta - 1}\right\}
  \le \max\left\{B, \norm{x_k}^\frac{\nu}{\beta - 1}\right\}
  \le \max\left\{B, \lambda \norm{x_k}\right\}.
\end{equation*}
We assumed that $\regularizer(x_{k + 1}) \ge \regularizer(x_k)$, so
we see that (because of the $\lambda$ term above)
\begin{equation}
  \label{eqn:coercive-fun}
  \regularizer(x_{k + 1}) \le \regularizer(x_k)
  ~~ \mbox{or} ~~
  \norm{x_{k + 1}} \le B.
\end{equation}

As $\regularizer$ is continuous (it is convex and defined on $\R^d$),
there exists $B'$ such that $\regularizer(x) \le B'$ for all $\norm{x} \le
B$,
and thus inequality~\eqref{eqn:coercive-fun} implies
\begin{equation*}
  \regularizer(x_{k + 1}) \le \regularizer(x_k) \vee B'.
\end{equation*}
We may assume w.l.o.g.\ that $\regularizer(x_1) \le B'$, because otherwise
we could simply increase $B' = \regularizer(x_1)$.
An inductive argument implies that
$\regularizer(x_{k + 1}) \le B'$ for all $k$,
which implies the result.

\bibliography{bib}
\bibliographystyle{abbrvnat}

\end{document}